\newcommand{\N}{\mathbb{N}}
\newcommand{\Psf}{\mathsf{P}}
\newcommand{\Qsf}{\mathsf{Q}}
\newcommand{\da}{\!\downarrow}
\newcommand{\ua}{\!\uparrow}
\newcommand{\imp}{\rightarrow}
\newcommand{\biimp}{\leftrightarrow}
\newcommand{\Acal}{\mathcal{A}}
\newcommand{\Mcal}{\mathcal{M}}
\newcommand{\Pcal}{\mathcal{P}}
\newcommand{\Rcal}{\mathcal{R}}
\newcommand{\uh}{{\upharpoonright}}
\renewcommand{\setminus}{\smallsetminus}
\newcommand{\set}[1]{\left\{ #1 \right\}}
\newcommand{\card}[1]{\left| #1 \right|}
\newcommand{\tuple}[1]{\left\langle #1 \right\rangle}
\newcommand{\s}[1]{\sf{#1}}
\newcommand{\ran}[1]{#1\mbox{-}\s{RAN}}
\newcommand{\rca}{\s{RCA}_0}
\newcommand{\aca}{\s{ACA}_0}
\newcommand{\wkl}{\s{WKL}_0}
\newcommand{\dnr}{\s{DNR}}
\newcommand{\bst}{\s{B}\Sigma^0_2}
\newcommand{\ist}{\s{I}\Sigma^0_2}
\newcommand{\rwkl}{\s{RWKL}}
\newcommand{\rwwkl}{\s{RWWKL}}
\newcommand{\rt}{\s{RT}}
\newcommand{\srt}{\s{SRT}}
\newcommand{\rrt}{\s{RRT}}
\newcommand{\srrt}{\s{SRRT}}
\newcommand{\ads}{\s{ADS}}
\newcommand{\sads}{\s{SADS}}
\newcommand{\cac}{\s{CAC}}
\newcommand{\scac}{\s{SCAC}}
\newcommand{\coh}{\s{COH}}
\newcommand{\fip}{\s{FIP}}
\newcommand{\ts}{\s{TS}}
\newcommand{\sts}{\s{STS}}
\newcommand{\fs}{\s{FS}}
\newcommand{\sfs}{\s{SFS}}
\newcommand{\emo}{\s{EM}}
\newcommand{\semo}{\s{SEM}}
\newcommand{\opt}{\s{OPT}}
\newcommand{\amt}{\s{AMT}}
\newcommand{\sdnr}{\s{SDNR}}
\newcommand{\kl}{\s{KL}}
\newtheoremstyle{custom}
  {10pt}
  {10pt}
  {\normalfont}
  {}
  {\bfseries}
  {}
  { }
  {}
\theoremstyle{custom}
\newtheorem{theorem}{Theorem}[section]
\newtheorem{lemma}[theorem]{Lemma}
\newtheorem{definition}[theorem]{Definition}
\newtheorem{question}[theorem]{Question}
\newtheorem{corollary}[theorem]{Corollary}
\theoremstyle{remark}
\newtheorem*{remark}{Remark}
\newtheorem*{claim}{Claim}
\author{Ludovic Patey}
\address{Laboratoire PPS, Université Paris Diderot - Paris 7\\
Case 7014, 75205 Paris Cedex 13, France}
\email{ludovic.patey@computability.fr}
\title[Degrees bounding principles and universal instances]
{Degrees bounding principles\\ and universal instances in reverse mathematics}
\date{\today}
\begin{document}

\maketitle

\begin{abstract}
A Turing degree $\textbf{d}$ \emph{bounds} a principle $\mathsf{P}$ of reverse mathematics
if every computable instance of $\mathsf{P}$ has a $\textbf{d}$-computable solution.
$\mathsf{P}$ admits a \emph{universal instance} if there exists a computable instance
such that every solution bounds $\mathsf{P}$.
We prove that the stable version of the ascending descending sequence principle ($\sads$)
as well as the stable version of the thin set theorem for pairs ($\sts(2)$) do not admit
a bound of low${}_2$ degree. Therefore no principle between Ramsey's theorem for pairs ($\rt^2_2$)
and $\sads$ or $\sts(2)$ admit a universal instance.
We construct a low${}_2$ degree bounding the Erd\H{o}s Moser theorem $(\emo)$,
thereby showing that previous argument does not hold for $\emo$.
Finally, we prove that the only $\Delta^0_2$
degree bounding a stable version of the rainbow Ramsey theorem for pairs ($\srrt^2_2$)
is $\mathbf{0}'$. Hence no principle between the stable Ramsey theorem for pairs $(\srt^2_2)$
and $\srrt^2_2$ admit a universal instance. In particular the stable version of the Erd\H{o}s Moser theorem does
not admit one. It remains unknown whether $\emo$ admits a universal instance.
\end{abstract}

\section{Introduction}

Reverse mathematics is a program whose goal is to classify theorems
in function of their computational strength, within the framework
of subsystems of second order arithmetic. Proofs are done relatively
to a very weak system ($\rca$) meant to capture \emph{computational mathematics}.
$\rca$ is composed of basic Peano axioms, $\Delta^0_1$ comprehension and $\Sigma^0_1$ induction schemes.
See~\cite{hirschfeldt2013slicing} for a good introductory book.
Most of statements in reverse mathematics are of the form
$$
\forall X(\Phi(X) \imp \exists Y\Psi(X,Y))
$$
where $\Phi$ and $\Psi$ are arithmetical formulas.

A set $X$ such that $\Phi(X)$ holds is called an \emph{instance} of $\Psf$ 
and a set $Y$ such that $\Psi(X,Y)$ holds is a \emph{solution} to~$X$.
We can see relations between two instances $X_1, X_2$ of a statement $\Psf$
as a mass problem consisting of computing a solution to $X_1$ given any solution to~$X_2$.

\begin{definition}
Given a statement $\Psf$, a degree $\textbf{d}$ is \emph{$\Psf$-bounding} ($\textbf{d} \gg_{\Psf} \emptyset$)
if every computable instance $X$ of $\Psf$ has a $\textbf{d}$-computable solution.
A statement $\Psf$ admits a \emph{universal instance} if it has a computable
instance $X$ such that every solution to $X$ bounds $\Psf$.
\end{definition}

The notation $\textbf{d} \gg \emptyset$ historically means that the degree $\textbf{d}$
is PA and therefore is equivalent to $\textbf{d} \gg_{\wkl} \emptyset$
where $\wkl$ is the weak König's lemma principle, i.e. König's lemma restricted to subtrees
of $2^{<\omega}$.
It is well-known that $\wkl$ admits a universal instance 
-- e.g. take the $\Pi^0_1$ class of completions of Peano arithmetics --.
A few principles have been proven to admit universal instances -- $\wkl$~\cite{odifreddi1992classical}, 
König's lemma ($\kl$) \cite{hirschfeldt2013slicing}, the Ramsey-type weak
weak König's lemma ($\rwwkl$) \cite{bievenu2014rwkl}, 
the finite intersection property ($\fip$) \cite{downey2012fip}, 
the omitting partial type theorem ($\opt$) \cite{hirschfeldt2009atomic},
or even the rainbow Ramsey theorem for pairs
($\rrt^2_2$) \cite{millercom1} -- but most of principles do not admit one. An important notion
for proving such a result is computable reducibility.

\begin{definition}
A statement $\Psf$ is \emph{computably reducible } to a statement $\Qsf$ (written $\Psf \leq_c \Qsf$)
if for every instance $X$ of $\Psf$ there exists an instance $Y$ of $\Qsf$ computable from $X$
such that each solution to $Y$ computes relative to $X$ a solution to $X$.
\end{definition}

Mileti proved in~\cite{miletipartition} that the stable Ramsey theorem for pairs
($\srt^2_2$) admits no bound of low${}_2$ degree.
Therefore every statement $\Psf$ having an $\omega$-model
with only low${}_2$ sets, and such that $\srt^2_2 \leq_c \Psf$, admits no universal instance.
In particular none of Ramsey's theorem for pairs ($\rt^2_2$), $\srt^2_2$ and the Ramsey-type
weak König's lemma relative to $\emptyset'$ ($\rwkl[\emptyset']$) admit a universal instance.
Independently, Hirschfeldt \& Shore proved in~\cite{hirschfeldt2007combinatorial} 
that the stable ascending descending sequence principle ($\sads$) admits no bound of low degree.
Hence none of $\sads$ and the stable chain antichain principle ($\scac$) admit a universal instance.

We generalize both results by proving that $\sads$ admits not bound of low${}_2$ degree,
proving therefore that if a statement $\Psf$ has an $\omega$-model with only low${}_2$ sets and $\sads \leq_c \Psf$
then $\Psf$ admits no universal instance.
We also extend the result to statements to which the stable thin set theorem for pairs ($\sts(2)$) computably reduces.
Hence we deduce that none of the ascending descending sequence principle ($\ads$), 
the chain antichain principle ($\cac$), the thin set theorem for pairs ($\ts(2)$), 
the free set theorem for pairs ($\fs(2)$) and their
stable versions admit a universal instance.

We generalize the result to arbitrary tuples and prove that none of $\rt^n_2$,
$\fs(n)$, $\ts(n)$ and their stable versions admit a universal instance for $n \geq 2$.
The question remains open for the rainbow Ramsey theorem for $n$-tuples ($\rrt^n_2$) with $n \geq 3$.
We construct a low${}_2$ degree bounding the Erd\H{o}s Moser theorem $(\emo)$,
thereby showing that previous argument does not hold for~$\emo$.

Mileti proved in~\cite{miletipartition} that the only $\Delta^0_2$ degree bounding
$\srt^2_2$ is $\mathbf{0}'$. Using the fact that every $\Delta^0_2$ set has an infinite incomplete
$\Delta^0_2$ subset in either it or its complement \cite{hirschfeldt2008strength},
we obtain another proof that $\srt^2_2$ admits no universal instance. 
We extend this result by proving that the only $\Delta^0_2$ degree bounding
a stable version version of the rainbow Ramsey theorem for pairs ($\srrt^2_2$) 
is $\mathbf{0}'$. Hence none of the statements $\Psf$ satisfying $\srrt^2_2 \leq_c \Psf \leq_c \srt^2_2$
admit a universal instance. In particular we deduce that neither $\srrt^2_2$ nor 
the stable version of the Erd\H{o}s Moser theorem ($\semo$) admits a universal instance.

\subsection{Notations}

\emph{Formulas}.
The notation $(\forall^{\infty} s)\varphi(s)$ means that $\varphi(s)$ holds for all but finitely many~$s$,
i.e.\ is translated to $(\exists s_0)(\forall s \geq s_0)\varphi(s)$.
Given two sets $X$ and $Y$, we denote by $X \subseteq^{*} Y$ the statement $(\forall^{\infty} s \in X)[s \in Y]$.
Accordingly, $X =^{*} Y$ means that both $X \subseteq^{*} Y$ and $Y \subseteq^{*} X$ hold, 
i.e.\ $X$ and $Y$ differ by finitely many elements.

\emph{Turing functional and lowness}. 
We fix an effective enumeration of all Turing functionals $\Phi_0, \Phi_1, \dots$
We denote by $\Phi_{e,s}$ the partial approximation of the Turing functional $\Phi_e$ at stage $s$.
Given a set $X$, we denote by $X'$ the jump of $X$ and by $X^{(n)}$ the $n$th jump of $X$.
A set $X$ is \emph{low${}_n$ over $Y$} if $(X \oplus Y)^{(n)} \leq Y^{(n)}$.
A set is \emph{low${}_n$} if it is low${}_n$ over $\emptyset$.
A \emph{low${}_n$-ness} index of a set $X$ low${}_n$ over $Y$ is a Turing index $e$
such that $\Phi^{Y^{(n)}}_e = (X \oplus Y)^{(n)}$.

\emph{Mathias forcing}.
Given two sets $E$ and $F$, we denote by $E < F$ the formula
$(\forall x \in E)(\forall y \in F) x < y$.
A \emph{Mathias condition} is a pair $(F, X)$
where $F$ is a finite set, $X$ is an infinite set
and $F < X$.
A condition $(\tilde{F}, \tilde{X})$ \emph{extends } $(F, X)$ (written $(\tilde{F}, \tilde{X}) \leq (F, X)$)
if $F \subseteq \tilde{F}$, $\tilde{X} \subseteq X$ and $\tilde{F} \setminus F \subset X$.
A set $G$ \emph{satisfies} a Mathias condition $(F, X)$
if $F \subset G$ and $G \setminus F \subseteq X$.

\section{Degrees bounding cohesiveness}

A standard proof of Ramsey's theorem for pairs consists of
reducing an arbitrary coloring of pairs into a \emph{stable} one
using the cohesiveness principle. The understanding of the links
between cohesiveness and stability is a very active subject of research
in reverse mathematics~\cite{cholak2001strength,hirschfeldt2008strength,chongmetamathematics}.

\begin{definition}[Cohesiveness]
An infinite set $C$ is $\vec{R}$-cohesive for a sequence of sets $R_0, R_1, \dots$
if for each $i \in \omega$, $C \subseteq^{*} R_i$ or $C \subseteq^{*} \overline{R_i}$.
A set $C$ is \emph{cohesive} (resp. \emph{$r$-cohesive}) if it is $\vec{R}$-cohesive where
$\vec{R}$ is an enumeration of all c.e.\ (resp. computable) sets.
$\coh$ is the statement ``Every uniform sequence of sets $\vec{R}$
has an $\vec{R}$-cohesive set.''
\end{definition}

Jockusch \& al. proved in~\cite{jockusch1993cohesive} the existence of a low${}_2$
cohesive set. Degrees bounding $\coh$ are quite well understood and admit a simple characterization:

\begin{theorem}[Jockusch \& Stephan \cite{jockusch1993cohesive}]\label{thm:coh-bounding-char} Fix an $n \in \omega$.
\begin{itemize}
	\item[1.] For every set $C$ such that $C' \gg \emptyset'$, $C \gg_{\coh} \emptyset$.
	\item[2.] There exists a uniformly $\emptyset^{(n)}$-computable sequence of sets $\vec{R}$
	such that for every $\vec{R}$-cohesive set $C$, $(C \oplus \emptyset^{(n)})' \gg \emptyset^{(n+1)}$.
\end{itemize}
\end{theorem}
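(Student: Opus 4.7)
Both parts rest on the correspondence between $\vec{R}$-cohesive sets and paths through the tree
\[
T_{\vec{R}} = \set{\sigma \in \setOfStrings : \bigcap_{i < |\sigma|} R_i^{\sigma(i)} \text{ is infinite}},
\]
where $R^0_i = R_i$ and $R^1_i = \overline{R_i}$. Any $\vec{R}$-cohesive set $C$ yields a path $f_C \in [T_{\vec{R}}]$ by setting $f_C(i) = 0$ if $C \subseteq^* R_i$ and $f_C(i) = 1$ otherwise, and conversely any path through $T_{\vec{R}}$ can be followed by a diagonal construction to produce a cohesive set. Since infinity is $\Pi^0_2$, $T_{\vec{R}}$ is a nonempty $\Pi^{0,\emptyset'}_1$ class when $\vec{R}$ is computable, and more generally a $\Pi^{0,\emptyset^{(n+1)}}_1$ class when $\vec{R}$ is uniformly $\emptyset^{(n)}$-computable.

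For part~(1), fix a computable $\vec{R}$ and $C$ with $C' \gg \emptyset'$. Since $T_{\vec{R}}$ is a nonempty $\Pi^{0,\emptyset'}_1$ class, $C'$ computes a path $f \in [T_{\vec{R}}]$; by the limit lemma, $f$ admits a $C$-computable approximation $(f_s)_{s \in \omega}$. I build a $C$-computable $\vec{R}$-cohesive $G$ via a Mathias-style construction guided by $(f_s)$: at each stage one commits to an apparently stable prefix of $f_s$ and extends the running Mathias condition by the least new element of the corresponding $C$-decidable intersection. As each bit $f_s(k)$ stabilizes to $f(k)$, the commitments become correct for level $k$, and only the finitely many elements added before stabilization can fall outside $R_k^{f(k)}$, giving $G \subseteq^* R_k^{f(k)}$ for every $k$.

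For part~(2), I relativize the classical encoding of Jockusch and Stephan to $\emptyset^{(n)}$. Fix a pair of disjoint sets $A_0, A_1$ that are $\Sigma^{0,\emptyset^{(n+1)}}_1$ and computably inseparable relative to $\emptyset^{(n+1)}$. Writing each $A_i$ as a $\Sigma^{0,\emptyset^{(n)}}_2$ set with a $\emptyset^{(n)}$-level stage-by-stage approximation, design a uniformly $\emptyset^{(n)}$-computable $\vec{R}$ with the property that every path through the resulting $\Pi^{0,\emptyset^{(n+1)}}_1$ class $T_{\vec{R}}$ codes a separator of $A_0, A_1$. For any $\vec{R}$-cohesive $C$, the $(C \oplus \emptyset^{(n)})'$-jump extracts such a separator from the cohesive pattern, whence $(C \oplus \emptyset^{(n)})' \gg \emptyset^{(n+1)}$ by the relativized inseparability.

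The main obstacle is part~(1), where only a $C'$-computable path $f$ is available but a $C$-computable cohesive set is required. The pruning of $T_{\vec{R}}$ to infinite intersections is what keeps the Mathias conditions from becoming vacuous along the approximation: at any stage whose committed prefix matches a prefix of the true $f$, the relevant intersection is infinite so there are new elements to add, and the finite residue of wrongly-committed additions is absorbed by the $\subseteq^*$-notion of cohesiveness.
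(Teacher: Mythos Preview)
Your plan is correct. Note first that the paper does not prove this theorem in full: it is cited from Jockusch--Stephan, and only clause~2 is sketched in the paragraph following the statement. So for clause~1 there is nothing in the paper to compare against; your $T_{\vec R}$-plus-limit-lemma construction is the standard argument. One technical point you should make explicit: when the committed prefix $\sigma$ at stage~$s$ is not yet a prefix of the true path $f$, the intersection $\bigcap_{i<|\sigma|} R_i^{\sigma(i)}$ may be finite, so an \emph{unbounded} search for a fresh element need not terminate. The usual fix is to bound the search by the stage number (skipping the stage if nothing is found below~$s$); infinitude of $G$ then follows because eventually every committed prefix agrees with $f$, along which all the intersections are infinite.

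For clause~2, your route through a $\Sigma^{0,\emptyset^{(n+1)}}_1$-inseparable pair and the class $[T_{\vec R}]$ is correct but more abstract than the paper's. The paper simply writes down the sequence
\[
R_e \;=\; \{\, s : \Phi_{e,s}^{\emptyset^{(n+1)}_s}(e)\downarrow = 1 \,\}
\]
and observes that for any $\vec R$-cohesive $C$ the limit $\tilde f(e)=\lim_{s\in C}[s\in R_e]$ exists, is $(C\oplus\emptyset^{(n)})'$-computable by the relativized limit lemma, and agrees with $\Phi_e^{\emptyset^{(n+1)}}(e)$ whenever the latter halts; this is exactly a $\{0,1\}$-valued DNC function relative to $\emptyset^{(n+1)}$. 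Your separating-class formulation unwinds to the same construction once one takes the canonical pair $A_i=\{e:\Phi_e^{\emptyset^{(n+1)}}(e)=i\}$, so the difference is presentational: the paper skips the detour through $T_{\vec R}$ and reads the PA-completion directly off the cohesive limit.
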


In particular, taking a set $P \gg \emptyset'$ low over $\emptyset'$ 
and a set $C$ such that $C' =_T P$ whose existence is ensured by Friedberg's jump inversion theorem, 
we obtain a low${}_2$ degree bounding $\coh$.
The canonical $\emptyset^{(n)}$-computable sequence of sets $\vec{R}$ whose existence
is claimed in clause 2 of Theorem~\ref{thm:coh-bounding-char} is
$$
R_e = \{ s : \Phi_{e,s}^{\emptyset^{(n+1)}_s}(e) \downarrow = 1 \}
$$
Every $\vec{R}$-cohesive set $C$ computes a function $f(\cdot, \cdot)$
such that $\lim_{s \in C} f(e,s)$ exists for each $e \in \omega$
and $\lim_{s \in C} f(e,s) = \Phi_e^{\emptyset^{(n+1)}}(e)$ for each
Turing index $e$ such that $\Phi_e^{\emptyset^{(n+1)}}(e) \downarrow$.
By a relativized version of Schoenfield's limit lemma, 
$(C \oplus \emptyset^{(n)})'$ computes the function $\tilde{f}(x) = \lim_{s \in C} f(x,s)$
and is therefore of PA degree relative to~$\emptyset^{(n+1)}$.

\begin{corollary}
$\coh$ admits a universal instance.
\end{corollary}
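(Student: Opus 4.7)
The plan is to combine the two clauses of Theorem~\ref{thm:coh-bounding-char} directly, with $n = 0$, so that one half produces a computable instance whose solutions are computationally strong enough, and the other half certifies that such strength suffices to bound $\coh$.

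More precisely, applying clause 2 of Theorem~\ref{thm:coh-bounding-char} with $n = 0$ yields a uniformly computable sequence of sets $\vec{R} = R_0, R_1, \dots$ (explicitly, $R_e = \{s : \Phi_{e,s}^{\emptyset'_s}(e)\da = 1\}$) such that every $\vec{R}$-cohesive set $C$ satisfies $(C \oplus \emptyset)' = C' \gg \emptyset'$. Since $\vec{R}$ is computable, it constitutes a computable instance of $\coh$. Now let $C$ be any solution to this instance, that is, any $\vec{R}$-cohesive set. By the conclusion just obtained, $C' \gg \emptyset'$, so clause 1 of Theorem~\ref{thm:coh-bounding-char} applies and gives $C \gg_{\coh} \emptyset$. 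Thus every computable instance of $\coh$ admits a $C$-computable solution, which is exactly what it means for $C$ to bound $\coh$. Hence $\vec{R}$ is a universal instance.

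There is no real obstacle here: the corollary is a direct packaging of the two clauses of Theorem~\ref{thm:coh-bounding-char}, with the only nontrivial observation being that the sequence $\vec{R}$ produced by clause 2 at level $n = 0$ is computable (rather than merely $\emptyset^{(n)}$-computable), which is immediate from the definition of $R_e$ given in the discussion following Theorem~\ref{thm:coh-bounding-char}.
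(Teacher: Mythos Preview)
Your proof is correct and follows exactly the same approach as the paper: take the computable sequence $\vec{R}$ from clause~2 of Theorem~\ref{thm:coh-bounding-char} with $n=0$, observe that any $\vec{R}$-cohesive set $C$ satisfies $C' \gg \emptyset'$, and then invoke clause~1 to conclude $C \gg_{\coh} \emptyset$. The only difference is that you spell out the details more explicitly than the paper's one-sentence proof.
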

\begin{proof}
The uniformly computable sequence of sets $\vec{R}$ such that the jump of every
$\vec{R}$-cohesive set is of PA degree relative to $\emptyset'$ is a universal instance
by previous theorem.
\end{proof}

Wang proved in~\cite{wang2013cohesive} that for every set $P \gg \emptyset''$
and every uniformly $\emptyset'$-computable sequence of sets $\vec{R}$, there exists
an $\vec{R}$-cohesive set $C$ such that $C'' \leq_T C \oplus \emptyset'' \leq_T P$.
Cholak \& al.\ used in~\cite{cholak2001strength} the existence of a low subuniform degree to deduce
the existence, for every set $P \gg \emptyset'$, of an r-cohesive set $C$ such that $C' \leq_T P$.
We can apply a similar reasoning for $\emptyset'$-computable sets, using the fact that degrees bounding $\coh$
are somehow subuniform degrees for $\Delta^0_2$ approximations.

\begin{theorem}
For every set $P \gg \emptyset''$, there exists an $\vec{R}$-cohesive set $C$
such that $C'' \leq_T C \oplus \emptyset'' \leq_T P$,
where $\vec{R}$ is the (non-uniformly computable) sequence of all $\emptyset'$-computable sets.
\end{theorem}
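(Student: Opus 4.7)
The plan is to adapt Wang's Mathias-forcing argument from the uniform case to the non-uniform sequence $\vec{R}$ of all $\emptyset'$-computable sets, exploiting the PA-over-$\emptyset''$ property of $P$ to resolve the non-uniformity. Index candidates canonically by $R_e = \{x : \Phi_e^{\emptyset'}(x) \downarrow\,= 1\}$, so that every $\emptyset'$-computable set arises as some $R_e$ with $\Phi_e^{\emptyset'}$ total. The desired set will emerge as $C = \bigcup_e F_e$ from a $P$-computable descending chain of Mathias conditions $(F_e, X_e)$ in which each $X_e$ is an infinite $\emptyset'$-computable set.

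At stage $e$ the pivotal question is whether $X_e \cap R_e$ is infinite; this is $\Pi^0_2(\emptyset')$, i.e.\ $\Pi^0_1(\emptyset'')$, regardless of whether $\Phi_e^{\emptyset'}$ is total. I would package the sequence of all such questions (across every condition that could conceivably arise) into a nonempty $\Pi^0_1(\emptyset'')$ tree of consistent answer-assignments, and use $P \gg \emptyset''$ to compute a path through it. When the path answers ``infinite'' at stage $e$, extend to $(F_e \cup \{n\},\, X_e \cap R_e \cap \{m : m > n\})$ for a suitably chosen $n$; otherwise extend via $X_e \setminus R_e$. By consistency of the chosen Henkin-style path, whenever $\Phi_e^{\emptyset'}$ is actually total the recorded answer agrees with the truth, so $C \subseteq^{*} R_e$ or $C \subseteq^{*} \overline{R_e}$ as required for $\vec{R}$-cohesion. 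This mirrors the ``subuniform'' use of $\coh$-bounding degrees one jump down, as in the Cholak \emph{et al.} reasoning the paper alludes to.

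The main obstacle is establishing $C'' \leq_T C \oplus \emptyset''$. Following Wang's analysis, the key is that $C \oplus \emptyset''$ should recover the entire sequence $(F_e, X_e)$: the $F_e$'s are read off from $C$, while at each stage the admissible successor $X_{e+1}$ is determined up to a choice among $\Pi^0_1(\emptyset'')$-equivalent alternatives that $C$ itself disambiguates. The subtle point is that the specific $P$-computable Henkin path need not be reconstructed -- any consistent path yields a valid cohesive extension, so $\emptyset''$ can canonically select the same branch at each stage given $C$. Once $C \oplus \emptyset''$ computes the sequence of conditions, it decides every $\Sigma^0_2(C)$ fact and hence computes $C''$, completing the proof.
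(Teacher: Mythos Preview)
Your approach is quite different from the paper's and contains a real gap. The paper does \emph{not} redo the Mathias forcing directly on the non-uniform family. Instead it first fixes a low${}_2$ set $C_0$ that is cohesive for the uniformly \emph{computable} sequence $U_{e,x}=\{s:\Phi^{\emptyset'_s}_{e,s}(x)=1\}$. The effect of this preliminary step is that the limits $V_e=\{x:\lim_{s\in C_0}\Phi^{\emptyset'_s}_{e,s}(x)=1\}$ form a \emph{uniformly $C_0'$-computable} sequence that already contains every $\emptyset'$-computable set. Since $C_0''=\emptyset''$ one still has $P\gg C_0''$, so Wang's theorem relativized to $C_0$ applies as a black box and yields the desired $C$. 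The whole point of $C_0$ is to convert the non-uniform family $\vec R$ into a uniform one so that the complexity bookkeeping of Wang's forcing goes through unchanged.

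Your direct construction founders on exactly the obstacle this trick is designed to avoid. With your indexing $R_e=\{x:\Phi_e^{\emptyset'}(x)\!\downarrow\,=1\}$, the set $R_e$ is only $\Sigma^0_1(\emptyset')$ when $\Phi_e^{\emptyset'}$ is partial, so neither $X_e\cap R_e$ nor $X_e\setminus R_e$ is $\emptyset'$-computable, contradicting your hypothesis that each $X_e$ is. More fatally, while ``$X_e\cap R_e$ is infinite'' is indeed $\Pi^0_1(\emptyset'')$, the companion statement ``$X_e\setminus R_e$ is infinite'' is only $\Pi^0_3(\emptyset')$; hence your tree of consistent answer-assignments is not a $\Pi^0_1(\emptyset'')$ class and $P\gg\emptyset''$ does not give you a path. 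Henkin-style consistency does not rescue this: if the class constrains only the ``infinite'' answers, the path may falsely answer ``finite'' at a total index $e$ and cohesiveness for that $R_e$ fails; if it also constrains the ``finite'' answers, the class leaves $\Pi^0_1(\emptyset'')$. Your recovery argument for $C''\leq_T C\oplus\emptyset''$ inherits the same problem, since the candidate reservoirs at each stage are no longer $\emptyset'$-computable and $C\oplus\emptyset''$ cannot distinguish among them in the way Wang's argument requires.
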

\begin{proof}
Let $\vec{U}$ be the uniformly computable sequence of sets defined by
$$
U_{e,x} = \{s : \Phi^{\emptyset'_s}_{e,s}(x) = 1\}
$$
Fix a low${}_2$ $\vec{U}$-cohesive set $C_0$ and its $C_0$-computable bijection $f : \omega \to C_0$.
Every set $P \gg \emptyset''$, $P \gg C_0''$.
Consider the uniformly $C_0'$-computable sequence of sets
$$
V_e = \{ x : \lim_s \Phi^{\emptyset'_{f(s)}}_{e,s}(x) = 1\}
$$
The sequence $\vec{V}$ contains every $\emptyset'$-computable set.
In particular, every $\vec{V}$-cohesive set is $\vec{R}$-cohesive.
By a relativization of Wang's result, there exists an $\vec{V}$-cohesive
set $C$ such that $(C \oplus C_0)'' \leq_T C \oplus C_0'' =_T C \oplus \emptyset'' \leq_T P$.
\end{proof}

The proof of previous theorem shows that an application of $\coh$
followed by an application of $\coh[\emptyset']$
are enough to obtain a set of degree bounding $\coh[\emptyset']$.
The following question remains open:

\begin{question}
Does $\coh[\emptyset']$ admit a universal instance ?
\end{question}

\section{Degrees bounding the atomic model theorem}

The atomic model theorem is a statement of model theory admitting
a simple, purely computability theoretic characterization over $\omega$-models.
This statement happens to have a weak computational content and is therefore a consequence
of many other principles in reverse mathematics. For those reasons,
the atomic model theorem is a good candidate for factorizing proofs 
of properties which are closed upward by the consequence relation.

\begin{definition}[Atomic model theorem]
A formula $\varphi(x_1, \dots, x_n)$ of $T$ is an \emph{atom} of a theory $T$ if for each formula $\psi(x_1, \dots, x_n)$,
one of $T \vdash \varphi \imp \psi$ and $T \vdash \varphi \imp \neg \psi$ holds, but not both.
A theory $T$ is \emph{atomic} if, for every formula $\psi(x_1, \dots, x_n)$ consistent with $T$,
there exists an atom $\varphi(x_1, \dots, x_n)$ of $T$ extending it, i.e.\ one such that $T \vdash \varphi \imp \psi$.
A model $\Acal$ of $T$ is \emph{atomic} if every $n$-tuple from $\Acal$ satisfies an atom of $T$. 
$\amt$ is the statement ``Every complete atomic theory has an atomic model''.
\end{definition}

$\amt$ has been introduced as a principle by Hirschfeldt \& al.\ in \cite{hirschfeldt2009atomic}.
They proved that $\wkl$ and $\amt$ are incomparable on $\omega$-models, 
proved over $\rca$ that $\amt$ is strictly weaker than $\sads$.
The author proved in~\cite{patey2014somewhere} that $\sts(2)$ implies $\amt$ over $\rca$.
In this section we use the fact that $\amt$ is not bounded by any $\Delta^0_2$ low${}_2$
degree to deduce that none of $\amt$, $\sads$ and $\scac$ admits a universal instance.
The principle $\amt$ has been proven in~\cite{hirschfeldt2009atomic,conidis2008classifying}
to be computably equivalent to the following principle:

\begin{definition}[Escape property]
For every $\Delta^0_2$ function $f$, there exists a function~$g$ such that $f(x) \leq g(x)$ for infinitely many $x$.
\end{definition}

This equivalence does not hold over $\rca$ as, unlike $\amt$, 
the escape property implies $\ist$ over $\bst$~\cite{hirschfeldt2009atomic}.
Using this characterization, we can easily deduce the two following theorems:

\begin{theorem}[Hirschfeldt \& al. \cite{hirschfeldt2009atomic}]\label{thm:amt-bounding}
There is no low${}_2$ $\Delta^0_2$ degree bounding $\amt$.
\end{theorem}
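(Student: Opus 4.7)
The plan is to exploit the computable equivalence of $\amt$ with the escape property recalled in the paper: a degree $\mathbf{d}$ bounds $\amt$ iff for every $\Delta^0_2$ function $f$ there is some $\mathbf{d}$-computable $g$ with $g(x)\geq f(x)$ for infinitely many $x$. Contrapositively, to show that $\mathbf{d}$ does not bound $\amt$ it suffices to produce a $\Delta^0_2$ function $f$ dominating every total $\mathbf{d}$-computable function. So, fixing an arbitrary low${}_2$ $\Delta^0_2$ set $D\in\mathbf{d}$, the task is to construct a $\Delta^0_2$ function $f$ with $f(x)>\Phi_e^D(x)$ for almost all $x$ whenever $\Phi_e^D$ is total.

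The naive dominator $f(x)=1+\max\{\Phi_e^D(x):e\leq x,\ \Phi_e^D(x)\downarrow\}$ is $D'$-computable but typically not $\emptyset'$-computable when $D$ is only low${}_2$, since detecting convergence of $D$-computations is a $D'$-level question. The low${}_2$ hypothesis is what lets one replace this by an honestly $\Delta^0_2$ construction: the totality-index set $T=\{e:\Phi_e^D\text{ is total}\}$ is $\Pi^0_2(D)$ and hence $\leq_T D''\leq_T\emptyset''$, so by the limit lemma relative to $\emptyset'$ it has a $\emptyset'$-computable limit approximation $T(e)=\lim_sT_s(e)$. Combined with $D\leq_T\emptyset'$, this gives $\emptyset'$ both an eventually correct guess at which indices are total and the ability to simulate any $D$-computation bit by bit.

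I would then define $f$ by a $\emptyset'$-computable simultaneous search: on input $x$, find the least stage $s$ at which, for every $e\leq x$, either $T_s$ has stabilized at $0$ on $e$ over a local block of stages (a condition $\emptyset'$ can check by inspection) or the $\emptyset'$-simulation of $\Phi_e^D(x)$ has converged within $s$ steps, and set $f(x)$ to one plus the maximum of the computed values. For any $e\in T$ and any $x\geq e$ beyond the stabilization stage of $T_s$ at $e$, the true value $\Phi_e^D(x)$ will appear in this maximum, so $f(x)>\Phi_e^D(x)$, and $f$ dominates $\Phi_e^D$.

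The main obstacle is showing that the search terminates uniformly in $x$ using only a $\emptyset'$ oracle. The approximation $T_s$ may transiently flag indices $e$ with $\Phi_e^D(x)$ divergent, so a naive wait for convergence can loop. The point is that for each fixed $x$ the approximation stabilizes on the finite window $\{0,\ldots,x\}$ at some finite stage, and at that stage every genuinely total $\Phi_e^D(x)$ has converged; engineering the search to detect this confluence of events using only $\emptyset'$---rather than $\emptyset''$, which would make the problem trivial---is where the low${}_2$ upgrade of $T$ from $\Pi^0_2(D)$-level to $\emptyset'$-limit-computable is essential.
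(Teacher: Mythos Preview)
Your approach is correct and is essentially the same as the paper's: both reduce the problem, via the escape-property characterization of $\amt$, to producing a single $\Delta^0_2$ function dominating every $D$-computable function. The paper does this in one line by quoting Martin's domination characterization (Lemma~\ref{lem:shypzp-lowt}): a set $A\leq_T\emptyset'$ is low${}_2$ iff some $f\leq_T\emptyset'$ dominates every $A$-computable function. What you have written is precisely the standard proof of the forward direction of that lemma, so you are unpacking a black box the paper simply cites.

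Two small points on the construction itself. First, the ``stabilized at $0$ over a local block of stages'' condition is an unnecessary complication and, as stated, it is not clear it guarantees termination. The clean condition at stage $s$ is simply the disjunction ``$T_s(e)=0$ \emph{or} $\Phi^{D}_{e,s}(x)\!\downarrow$ in the $\emptyset'$-simulation'', tested for every $e\leq x$. Termination is then immediate: if $\Phi_e^D$ is not total then $T_s(e)=0$ for all sufficiently large $s$, and if it is total then $\Phi_e^D(x)$ eventually halts. Second, you should require the search to range over $s\geq x$. This ensures that for any total $\Phi_e^D$ and any $x$ beyond both $e$ and the stabilization stage of $T_{\cdot}(e)$ at the value $1$, the first disjunct is unavailable at the stage $s\geq x$ you find, so the genuine value $\Phi_e^D(x)$ is forced into the maximum and $f(x)>\Phi_e^D(x)$. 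With these two adjustments your worry about the search looping disappears, and the argument is complete.
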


\begin{theorem}\label{thm:amt-universal}
No principle $\Psf$ having an $\omega$-model with only low sets
and such that $\amt \leq_c \Psf$ admits a universal instance.
\end{theorem}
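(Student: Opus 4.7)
The plan is a straightforward diagonalization against Theorem~\ref{thm:amt-bounding}, using the $\omega$-model hypothesis to produce a low solution to the putative universal instance. First I would assume for contradiction that $\Psf$ admits a universal instance $X$, which by definition is computable. The goal is then to produce a low solution $Y$ to $X$ that nevertheless bounds $\amt$, contradicting Theorem~\ref{thm:amt-bounding} since every low set is $\Delta^0_2$ and low${}_2$.

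The first key step is to verify that $\leq_c$ transports the bounding property: if $Y$ bounds $\Psf$ and $\amt \leq_c \Psf$, then $Y$ bounds $\amt$. Given a computable $\amt$-instance $A$, the reduction gives an $A$-computable (hence computable) $\Psf$-instance $B$ such that every solution to $B$ computes, relative to $A$, a solution to $A$; since $A$ is computable, any solution to $B$ directly computes a solution to $A$. Since $Y$ bounds $\Psf$, $Y$ computes a solution to $B$, hence a solution to $A$. Thus $Y \gg_\amt \emptyset$.

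Next I would use the $\omega$-model hypothesis to produce such a low $Y$. Let $\Mcal$ be an $\omega$-model of $\Psf$ whose second-order part consists only of low sets. Since $X$ is computable, $X \in \Mcal$, so $\Mcal$ contains some solution $Y$ to $X$; by hypothesis $Y$ is low. Because $X$ is a universal instance, $Y$ bounds $\Psf$, and by the previous step $Y$ bounds $\amt$. But $Y$ being low means $Y$ is $\Delta^0_2$ and low${}_2$, contradicting Theorem~\ref{thm:amt-bounding}.

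I do not anticipate a genuine obstacle here; the only point deserving care is the compositionality of bounding with $\leq_c$, which is immediate because the reduction produces a computable $\Psf$-instance from a computable $\amt$-instance. All the real work has already been done in the prior reduction of $\amt$ to the escape property and in Theorem~\ref{thm:amt-bounding}.
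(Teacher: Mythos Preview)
Your proposal is correct and follows essentially the same approach as the paper: obtain a low solution $Y$ to the putative universal instance via the $\omega$-model, then derive a contradiction with Theorem~\ref{thm:amt-bounding}. The only cosmetic difference is that you package the contradiction as ``$Y$ bounds $\amt$'' and invoke Theorem~\ref{thm:amt-bounding} directly, whereas the paper unwinds this by exhibiting a specific computable $\amt$-instance (via Lemma~\ref{lem:shypzp-lowt} and the escape property) that $Y$ fails to solve and then pulling it back along $\leq_c$ to a $\Psf$-instance contradicting universality.
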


Theorem~\ref{thm:amt-bounding} and Theorem~\ref{thm:amt-universal} can be easily
proven using the following characterization of $\Delta^0_2$ low${}_2$ sets in terms of domination:

\begin{lemma}[Martin, \cite{martin1966classes}]\label{lem:shypzp-lowt}
A set $A \leq_T \emptyset'$ is low${}_2$ iff there exists an $f \leq_T \emptyset'$ dominating every $A$-computable function.
\end{lemma}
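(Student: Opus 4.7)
The strategy is to recognize this lemma as a relativization of Martin's classical characterization of high degrees by domination: for $X \geq_T B$, the jump inequality $X' \geq_T B''$ holds iff $X$ computes a function dominating every $B$-computable function. I would apply it with $X = \emptyset'$ and $B = A$; the automatic inequality $\emptyset'' \leq_T A''$ (from $\emptyset \leq_T A$) combined with the hypothesis ``$A$ is low${}_2$'' ($A'' \leq_T \emptyset''$) gives $A'' \equiv_T \emptyset''$, i.e., $\emptyset'$ is high over $A$, which by the relativized Martin theorem is equivalent to the existence of $f \leq_T \emptyset'$ dominating every $A$-computable function.

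For the non-trivial direction of the (relativized) Martin theorem, I would use lowness${}_2$ of $A$ to place $\mathrm{Tot}^A := \{e : \Phi_e^A \text{ is total}\}$ in $\Pi^0_2$, which yields a $\emptyset'$-computable limit approximation $\sigma(e,s) \to \chi_{\mathrm{Tot}^A}(e)$. Define $t(n)$ to be the least $s$ such that for every $e \leq n$, either $\sigma(e,s) = 0$ or $\Phi_{e,s}^A(n)\da$; this $t$ is $\emptyset'$-computable and total, since at the limit each $e \leq n$ either settles $\sigma$ to $0$ or has $\Phi_e^A(n)\da$. Then
\[
f(n) := \max\{\Phi_{e,t(n)}^A(n) : e \leq n,\, \Phi_{e,t(n)}^A(n)\da\}
\]
is $\emptyset'$-computable, and with a cautious choice of $\sigma$ (one that does not prematurely output $0$ on an $e \in \mathrm{Tot}^A$ before the relevant computation has had a chance to settle), $f$ dominates every total $A$-computable function.

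For the converse direction, given $f \leq_T \emptyset'$ dominating every $A$-computable function, I would show $\mathrm{Tot}^A \leq_T \emptyset''$, which yields $A'' \leq_T \emptyset''$ since $A'' \equiv_T \mathrm{Tot}^A$ by $\Sigma^A_2$-completeness. Using $f$, the statement ``$\Phi_e^A$ is total'' is equivalent to the existence of some $n_0$ with $\Phi_e^A(n)\da$ for $n < n_0$ and $\Phi_{e,f(n)}^A(n)\da$ for $n \geq n_0$; this places $\mathrm{Tot}^A$ in $\Sigma^0_3$, and combined with the automatic $\Pi^0_3$-complexity coming from $A \leq_T \emptyset'$, gives $\mathrm{Tot}^A \in \Delta^0_3 \leq_T \emptyset''$. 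The main obstacle is the design of the approximation $\sigma$ in the forward direction, where one must ensure that no total $\Phi_e^A$ is ever excluded from the maximum defining $f$ because of a transient $\sigma(e,s) = 0$.
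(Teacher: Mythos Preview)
Your plan matches the paper's proof: observe that $A$ is low${}_2$ iff $\emptyset'$ is high relative to $A$, then invoke the relativized Martin domination theorem; your second and third paragraphs, which sketch a proof of that relativized theorem, go beyond the paper, which simply cites it. One slip: low${}_2$-ness of $A$ places $\mathrm{Tot}^A$ in $\Delta^0_3$ (equivalently, makes it $\emptyset'$-limit-computable), not in $\Pi^0_2$---but since you only use the resulting $\emptyset'$-computable limit approximation, the argument is unaffected (and the obstacle you flag is handled most cleanly by requiring $t(n)\geq n$ rather than by modifying $\sigma$).
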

\begin{proof}
A set $A$ is low${}_2$ iff $\emptyset'$ is high relative to $A$.
As a set $X$ is high relative to a set $A \leq_T \emptyset'$
iff it computes a function dominating every $A$-computable function, we conclude.
\end{proof}

\begin{remark}
As explained Conidis in~\cite{conidis2008classifying},
Theorem~\ref{thm:amt-bounding} cannot be extended to every low${}_2$ sets: 
Soare~\cite{conidis2008classifying} constructed a low${}_2$ set bounding
the escape property using a forcing argument. So there exists
a low${}_2$ degree bounding $\amt$.
\end{remark}

\begin{proof}[Proof of Theorem~\ref{thm:amt-universal}]
Suppose for the sake of contradiction that $\Psf$ has a universal instance $U$
and an $\omega$-model $\Mcal$ with only low sets. As $U$ is computable, $U \in \Mcal$.
Let $X \in \Mcal$ be a (low) solution to $U$. In particular, $X$ is low${}_2$ and $\Delta^0_2$,
so by Lemma~\ref{lem:shypzp-lowt} and the computable equivalence of $\amt$ and the escape property, 
there exists a computable instance~$Y$ of~$\amt$ 
such that $X$ does not compute a solution to~$Y$. 
As $\amt \leq_c \Psf$, there exists a $Y$-computable (hence computable) instance~$Z$
of~$\Psf$ such that every solution to~$Z$ computes a solution to $Y$. Thus $X$ does not compute a solution to~$Z$,
contradicting universality of~$U$.
\end{proof}

Hirschfeldt \& al.\ proved in~\cite{hirschfeldt2007combinatorial}
the existence of an $\omega$-model of $\sads$ and $\scac$ with only low sets. 
Therefore we obtain another proof that neither $\sads$ nor $\scac$ admits a universal instance.
The result was first proven in~\cite{hirschfeldt2007combinatorial} using an ad-hoc notion of reducibility.

\begin{corollary}
None of $\amt$, $\sads$ and $\scac$ admit a universal instance.
\end{corollary}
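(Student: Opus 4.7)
The plan is to apply Theorem~\ref{thm:amt-universal} three times, once for each of $\Psf \in \{\amt, \sads, \scac\}$. For each choice, I must verify the two hypotheses of that theorem: (i) $\amt \leq_c \Psf$, and (ii) $\Psf$ admits an $\omega$-model consisting only of low sets.

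Hypothesis (ii) is essentially already in the literature. Hirschfeldt \& al.~\cite{hirschfeldt2007combinatorial} construct an $\omega$-model of $\sads$ (and of $\scac$) containing only low sets. Since $\sads$ implies $\amt$ over $\rca$, as recalled in the paragraphs preceding the corollary, the very same model is also a model of $\amt$, so (ii) holds uniformly across the three principles.

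Hypothesis (i) is trivial for $\Psf = \amt$ via the identity reduction. For $\Psf = \sads$, the relevant computable reduction is implicit in the proof that $\sads$ implies $\amt$ over $\rca$ given by Hirschfeldt \& al.~\cite{hirschfeldt2009atomic}: the construction is uniform in the input theory and transforms any computable complete atomic theory into a computable stable linear order whose ascending/descending sequences compute atomic models. For $\Psf = \scac$, one composes the previous reduction with the standard reduction $\sads \leq_c \scac$, which converts a stable linear order on $\omega$ into a stable partial order whose chains and antichains correspond to ascending and descending sequences of the original order.

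With hypotheses (i) and (ii) in place, Theorem~\ref{thm:amt-universal} delivers the conclusion directly for each of the three principles. The proof is essentially a bookkeeping exercise combining Theorem~\ref{thm:amt-universal} with the Hirschfeldt--Shore low-set model construction and two known computable reductions, so I foresee no real obstacle; the only point requiring mild care is to verify that the cited $\rca$-implications are witnessed by genuinely computable reductions and not merely by provability, which is routine in each case.
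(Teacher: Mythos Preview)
Your proposal is correct and follows essentially the same approach as the paper: the paper's argument is just the paragraph preceding the corollary, which invokes Theorem~\ref{thm:amt-universal} together with the Hirschfeldt--Shore $\omega$-model of $\sads$ and $\scac$ with only low sets. Your additional care in checking that the implications $\sads \to \amt$ and $\scac \to \sads$ are genuine computable reductions (and not merely $\rca$-provable implications) is exactly the point the paper leaves implicit.
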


Previous argument can not directly be applied to $\srt^2_2$, $\semo$ or $\sts(2)$
as none of those principles admit an $\omega$-model with only 
low sets~\cite{downey2001delta,kreuzer2012primitive,patey2014somewhere}.
However Lemma~\ref{thm:amt-universal} can be extended to principles such that every computable
instance has a $\Delta^0_2$ low${}_2$ solution. It is currently unknown whether
every $\Delta^0_2$ set admits a $\Delta^0_2$ low${}_2$ infinite subset in either it or its complement.
A positive answer would lead to a proof that $\srt^2_2$, $\semo$ and $\sts(2)$ have no universal instance,
and more importantly, would provide an $\omega$-model of $\srt^2_2$ not model of $\dnr[\emptyset']$
as explained in~\cite{hirschfeldt2008strength}. We shall see later that none of $\srt^2_2$,
$\semo$ and $\sts(2)$ admits a universal instance.

\section{Degrees bounding \texorpdfstring{$\sts(2)$}{STS(2)} and \texorpdfstring{$\sads$}{SADS}}

Mileti originally proved in~\cite{miletipartition} that no principle $\Psf$ having an $\omega$-model
with only low${}_2$ sets and satisfying $\srt^2_2 \leq_c \Psf$ admits a universal instance, and deduced
that none of $\srt^2_2$ and $\rt^2_2$ admit one.
In this section, we reapply his argument to much weaker statements and derive non-universality
results to a large range of principles in reverse mathematics.
Thin set theorem and ascending descending sequence are example of statements
weak enough to be a consequence of many others, and surprisingly strong enough
to diagonalize against low${}_2$ sets.

\begin{definition}[Thin set]
Let $k \in \omega$ and $f : [\omega]^k \to \omega$.
A set $A$ is \textit{thin for $f$} if $f([A]^n) \neq \omega$, that is, if the set $A$ ``avoids''
at least one color.
$\ts(k)$ is the statement ``every function $f : [\omega]^k \to \omega$ has
an infinite set thin for~$f$''.
$\sts(k)$ is the restriction of $\ts(k)$ to stable functions.
\end{definition}

Cholak \& al. studied extensively thin set principle
in \cite{cholak2001free}.
Some of the results where already stated by Friedman
without giving a proof, notably there exists an $\omega$-model of $\wkl$
which is not a model of $\ts(2)$, and the arithmetical comprehension axiom ($\aca$) does not imply $(\forall k)\ts(k)$ over~$\rca$.
Wang showed in \cite{wang2013some} that $(\forall k)\ts(k)$ does not imply $\aca$ on $\omega$-models. 
Rice \cite{ricethin} proved that $\sts(2)$ implies $\dnr$ over $\rca$.
The author proved in~\cite{patey2014somewhere} that $\rca \vdash \ts(2) \imp \rrt^2_2$.

\begin{definition}[Ascending descending sequence]
$\ads$ is the statement ``Every linear order admits an infinite ascending or descending sequence''.
$\sads$ is the restriction of $\ads$ to order types $\omega + \omega^{*}$.
\end{definition}

Tennenbaum~\cite{rosenstein1983linear} constructed a computable linear order of order type $\omega + \omega^{*}$
with no computable ascending or descending sequence. Therefore $\sads$ does not hold over $\rca$.
Hirschfeldt \& Shore~\cite{hirschfeldt2007combinatorial} studied $\ads$ within
the framework of reverse mathematics,
proving that $\ads$ imply both $\coh$ and $\bst$ over $\rca$
and that $\sads$ implies $\amt$ over $\rca$.
They constructed an $\omega$-model of $\ads$ not model of $\dnr$,
and an $\omega$-model of $\coh + \wkl$ not model of $\sads$.

The study of degrees bounding a statement 
and the existence of a universal instance are closely related.
As does Mileti in~\cite{miletipartition}, we deduce two kind of theorems
by the application of his proof technique.

\begin{theorem}\label{thm:sts-semo-sads-bounding}
There exists no low${}_2$ degree bounding any of $\sts(2)$ or $\sads$.
\end{theorem}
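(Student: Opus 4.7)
The plan is to follow the argument of Mileti~\cite{miletipartition}, which established the analogous statement for $\srt^2_2$, and adapt it to both $\sts(2)$ and $\sads$. Fix a low${}_2$ set $D$ together with a low${}_2$-ness index, so that $D''$ is uniformly computable from $\emptyset''$. In particular, $\emptyset''$ uniformly decides, for each Turing functional $\Phi_i$, whether $\Phi_i^D$ is total with infinite range. The goal is to construct, computably from such an index, an instance of $\sts(2)$ (resp.\ $\sads$) admitting no $D$-computable solution.

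The common skeleton is that, in their stable forms, both principles reduce to extracting an infinite set from one ``side'' of a $\Delta^0_2$ partition of $\omega$. For $\sads$, a computable linear order of order type $\omega+\omega^*$ is coded by its $\Delta^0_2$ upper part $U$; any infinite ascending (resp.\ descending) sequence is, up to a finite initial segment, an infinite subset of $U$ (resp.\ $\overline{U}$). For stable $\sts(2)$, a stable function $f:[\omega]^2\to\omega$ yields the $\Delta^0_2$ function $g(x)=\lim_y f(x,y)$, and a thin set is, modulo finitely many elements, an infinite subset of $g^{-1}(\omega\setminus\{c\})$ for some colour $c$. It therefore suffices to build the instance so that its underlying $\Delta^0_2$ parameter defeats every $D$-computable infinite set, forcing such a set to intersect each relevant side.

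Concretely, I would run a finite-injury priority construction relative to $\emptyset''$ with requirements $R_i$: ``$\Phi_i^D$ is not an infinite solution to the constructed instance.'' Using $\emptyset''$, one tracks which $\Phi_i^D$ appear total and infinite; when a fresh element $x$ is enumerated into such a $\Phi_i^D$, one commits the underlying $\Delta^0_2$ parameter at $x$ so as to pull $x$ into the side opposite to the diagonalization (for $\sts(2)$, into a fresh colour class already conceded to $\Phi_i^D$; for $\sads$, into the opposite half of $\omega+\omega^*$). The instance itself remains computable since the coloring (resp.\ linear order) is produced combinatorially, while its $\Delta^0_2$ defining parameter may be revised finitely often per input.

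The main obstacle is reconciling these diagonalization commitments with the structural constraints: stability of $f$ for $\sts(2)$, and the order type $\omega+\omega^*$ for $\sads$. This is handled by reserving, for each requirement, an infinite reservoir of fresh integers so that diagonalization for $R_i$ never collapses either half to a finite set nor violates stability, while the $\emptyset''$-approximation of the low${}_2$-ness information of $D$ injures only finitely many higher-priority strategies. Since each $R_i$ is eventually satisfied once the totality-and-infinity guess for $\Phi_i^D$ stabilizes, the resulting computable instance admits no $D$-computable solution, proving that no low${}_2$ degree bounds $\sts(2)$ or $\sads$.
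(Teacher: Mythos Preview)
Your proposal is on the right track—both the paper and you follow Mileti's framework—but you have not identified the mechanism that actually makes the argument go through, and the mechanism you sketch instead does not work as stated.

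The construction must be carried out at the $\emptyset'$ level, since the underlying parameter (the limit coloring for $\sts(2)$, the set $U$ for $\sads$) has to be $\Delta^0_2$. Yet you propose to ``track which $\Phi_i^D$ appear total and infinite'' and to act ``when a fresh element $x$ is enumerated into such a $\Phi_i^D$.'' A low${}_2$ set $D$ need not be $\Delta^0_2$, so membership in $\Phi_i^D$ is not available to a $\emptyset'$-construction; at best it is $\Delta^0_3$. You never explain how a $\emptyset'$-level construction uses this information without destroying the $\Delta^0_2$-ness of the parameter, nor how the commitments at specific witnesses $x$ stabilize when the approximation to $D$ oscillates.

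The paper avoids this entirely. It first replaces $D$ by a single function $f$ low over $D$ (hence still low${}_2$) that uniformly captures every $D$-computable set as some $Z_e=\{a:f(e,a)=1\}$ (Theorem~\ref{thm:mileti-f-compute}). Then, crucially, it uses Kleene's recursion theorem: knowing in advance an index $d$ for the $\emptyset'$-computable function $c$ being built, the requirement $\mathcal{R}_{e,i}$ becomes ``$Z_e$ is finite or $(\exists a)[f(e,a)=1 \wedge \Phi^{\emptyset'}_d(a)=i]$.'' This is a $\Sigma^f_2$ sentence, hence its truth is coded into $f''\leq_T\emptyset''$ and admits a $\emptyset'$-computable approximation $g(m_{e,i},s)$. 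The construction then never looks at individual elements of any $Z_e$: at stage $s$ it simply sets $c(s)$ to the second coordinate of the least pair $\langle e,i\rangle$ whose requirement currently appears unsatisfied. If some $\mathcal{R}_{e,i}$ were permanently unsatisfied while all higher ones were satisfied, then $c(s)=i$ cofinitely, and any infinite $Z_e$ would contain such an $s$—contradiction. This self-referential formulation is the missing idea in your sketch.

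For $\sads$ there is a second point you gloss over: a $\Delta^0_2$ set $U$ does not by itself ``code'' a computable linear order of type $\omega+\omega^*$. The paper builds $U$ and $L$ simultaneously, using a decision-maker mechanism so that when $U_s$ changes on an element, an entire interval of followers moves with it; this is what guarantees transitivity of $L$. Your proposal to first build $U$ and then read off $L$ does not address this.
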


\begin{theorem}\label{thm:sts-semo-sads-universal}
No principle $\Psf$ having an $\omega$-model with only low${}_2$ sets
and such that any of $\sts(2)$, $\sads$ is computably reducible to $\Psf$
admits a universal instance.
\end{theorem}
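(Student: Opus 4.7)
The plan is to imitate the argument used to prove Theorem~\ref{thm:amt-universal}, replacing the role of $\amt$ and \emph{low} with $\sts(2)$ (or $\sads$) and \emph{low${}_2$}, and invoking Theorem~\ref{thm:sts-semo-sads-bounding} in place of Theorem~\ref{thm:amt-bounding}. The argument is essentially a straightforward contrapositive: a universal instance forces all solutions of a single computable instance of $\Psf$ to be $\Psf$-bounding, and in particular (via $\leq_c$) to bound $\sts(2)$ or $\sads$; but the $\omega$-model hypothesis furnishes a low${}_2$ such solution, contradicting Theorem~\ref{thm:sts-semo-sads-bounding}.

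In detail, I would suppose for contradiction that $\Psf$ has a universal instance $U$ and an $\omega$-model $\Mcal$ containing only low${}_2$ sets. Since $U$ is computable, $U \in \Mcal$, so pick a solution $X \in \Mcal$ to $U$; by hypothesis $X$ is low${}_2$. By Theorem~\ref{thm:sts-semo-sads-bounding}, $X$ does not bound $\sts(2)$ (respectively $\sads$), so there is a computable instance $Y$ of $\sts(2)$ (resp.\ $\sads$) with no $X$-computable solution. Now using $\sts(2) \leq_c \Psf$ (resp.\ $\sads \leq_c \Psf$), obtain a $Y$-computable — hence computable — instance $Z$ of $\Psf$ such that every solution to $Z$ computes, relative to $Y$, a solution to $Y$. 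Since $Y$ is computable, every solution to $Z$ outright computes a solution to $Y$. By universality of $U$, $X$ should compute a solution to $Z$, and thereby a solution to $Y$, contradicting the choice of $Y$.

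The only point worth care is the bookkeeping of oracles in the definition of $\leq_c$: the computable reduction hands back solutions to $Y$ only relative to $Y$, but because $Y$ is computable this relativization is vacuous, and the chain $X \geq_T \text{solution to } Z \geq_T \text{solution to } Y$ goes through cleanly. The main obstacle in the overall story is therefore not this implication but rather Theorem~\ref{thm:sts-semo-sads-bounding} itself, which carries the diagonalization against low${}_2$ degrees and is the genuine analog of Mileti's construction for $\srt^2_2$; once that is in hand, the present theorem is an immediate corollary by the template above.
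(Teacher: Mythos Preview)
Your argument is correct and is essentially the same as the paper's: the paper packages the contrapositive you describe as part (iii) of a general Lemma~\ref{lem:universal-instance}, which together with the diagonalization Lemmas~\ref{lem:sts2-universal} and~\ref{lem:sads-universal} simultaneously yields Theorems~\ref{thm:sts-semo-sads-bounding} and~\ref{thm:sts-semo-sads-universal}. The only cosmetic difference is that you invoke Theorem~\ref{thm:sts-semo-sads-bounding} as a black box, whereas the paper derives both theorems in parallel from the same lemma; the underlying reasoning is identical.
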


The proof of the two theorems is split into three lemmas.
Lemma~\ref{lem:universal-instance} provides a general way of obtaining
bounding and universality results, assuming the ability of a principle to diagonalize
against a particular set. 
Lemma~\ref{lem:sts2-universal} and Lemma~\ref{lem:sads-universal}
state the desired diagonalization for respectively $\sts(2)$ and $\sads$.

\begin{corollary}\label{thm:rt22-sts2-universal}
None of the following principles admits a universal instance:
$\rt^2_2$, $\rwkl[\emptyset']$, $\fs(2)$, $\ts(2)$, $\cac$, $\ads$ and their
stable versions.
\end{corollary}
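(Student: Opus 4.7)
The plan is to derive this corollary from Theorem~\ref{thm:sts-semo-sads-universal}, which reduces each case to checking two things about the principle $\Psf$ at hand: that $\Psf$ has an $\omega$-model in which every set is low${}_2$, and that either $\sts(2) \leq_c \Psf$ or $\sads \leq_c \Psf$.

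For the computable reductions, the stable-to-general direction is immediate since a stable instance is also a valid instance of the general principle, giving $\sts(2) \leq_c \ts(2)$, $\sads \leq_c \ads$, $\scac \leq_c \cac$, and similarly for the stable version of every remaining principle on the list. A homogeneous set for a $2$-coloring is thin, hence $\sts(2) \leq_c \srt^2_2 \leq_c \rt^2_2$, and a free set for a function $[\omega]^2 \to \omega$ is thin, hence $\sts(2) \leq_c \fs(2)$. For the chain--antichain case, applying $\cac$ to the partial order naturally associated to a linear order of type $\omega+\omega^*$ yields an ascending or descending sequence, so $\sads \leq_c \cac$. The only nonroutine reduction is $\sts(2) \leq_c \rwkl[\emptyset']$, which I would obtain by composing $\sts(2) \leq_c \srt^2_2$ with Mileti's reduction $\srt^2_2 \leq_c \rwkl[\emptyset']$ from~\cite{miletipartition} and invoking transitivity of $\leq_c$.

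For the $\omega$-model condition, the Cholak--Jockusch--Slaman low${}_2$ preservation theorem provides an $\omega$-model of $\rt^2_2$ whose second-order part consists entirely of low${}_2$ sets, and this same model simultaneously witnesses the condition for $\ts(2)$, $\fs(2)$, $\cac$, $\ads$ and all their stable versions, since each is an $\omega$-model consequence of $\rt^2_2$. For $\rwkl[\emptyset']$ I would relativize a standard construction of an $\omega$-model of $\rwkl$ whose sets are low. The main obstacle, modest as it is, lies in this last relativization: verifying that the construction of a low${}_2$-preserving $\omega$-model of $\rwkl[\emptyset']$ goes through uniformly. Once this is in place the corollary is a mechanical instantiation of Theorem~\ref{thm:sts-semo-sads-universal} to each of the listed principles.
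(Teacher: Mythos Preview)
Your overall strategy matches the paper's: for each listed principle $\Psf$, verify that $\sads \leq_c \Psf$ or $\sts(2) \leq_c \Psf$ and that $\Psf$ has an $\omega$-model containing only low${}_2$ sets, then invoke Theorem~\ref{thm:sts-semo-sads-universal}. The reductions you sketch are correct in substance.

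The one place you diverge is $\rwkl[\emptyset']$. First, the reduction $\srt^2_2 \leq_c \rwkl[\emptyset']$ is not in Mileti's thesis; the paper cites Flood~\cite{Flood2012} for the facts about $\rwkl[\emptyset']$. More importantly, your proposed separate construction of a low${}_2$ $\omega$-model of $\rwkl[\emptyset']$ by ``relativizing a standard construction of an $\omega$-model of $\rwkl$ whose sets are low'' does not work as stated: relativizing to $\emptyset'$ yields sets that are low over $\emptyset'$, and a set low over $\emptyset'$ is in general only low${}_3$ (for instance $\emptyset'$ itself is low over $\emptyset'$ but not low${}_2$). The paper sidesteps this difficulty entirely by noting, again via Flood, that $\rwkl[\emptyset']$ is a consequence of $\rt^2_2$ over $\rca$; hence the single Cholak--Jockusch--Slaman $\omega$-model of $\rt^2_2$ with only low${}_2$ sets already models $\rwkl[\emptyset']$ along with every other principle on the list. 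This removes the obstacle you flagged and is the intended argument.
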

\begin{proof}
Each of the above mentioned principles is a consequence of $\rt^2_2$ over $\rca$
and computably implies either $\sads$ or $\sts(2)$.
See~\cite{Flood2012} for $\rwkl[\emptyset']$, \cite{cholak2001free} for~$\fs(2)$ and $\ts(2)$,
and \cite{hirschfeldt2007combinatorial} for $\cac$ and $\ads$.
By Theorem~3.1 of \cite{cholak2001strength}, there exists an $\omega$-model of $\rt^2_2$
having only low${}_2$ sets. We conclude by Theorem~\ref{thm:sts-semo-sads-universal}.
\end{proof}

In order to prove Theorem~\ref{thm:sts-semo-sads-bounding} and Theorem~\ref{thm:sts-semo-sads-universal},
we need the following theorem proven by Mileti. It simply consists of applying a relativized 
version of the low basis theorem to a $\Pi^0_1$ class of completions of the enumeration
of all partial computable sets.

\begin{theorem}[Mileti, Corollary 5.4.5 of \cite{miletipartition}]\label{thm:mileti-f-compute}
For every set $X$, there exists $f : \omega^2 \to \set{0,1}$ low over $X$ such that for every
$X$-computable set $Z$, there exists an $e \in \omega$ with $Z = \set{a \in \omega : f(e,a) = 1}$.
\end{theorem}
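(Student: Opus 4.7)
The plan is to realize $f$ as a member of a suitable $\Pi^0_1(X)$ class and apply the relativized low basis theorem, following the suggestion immediately preceding the statement.

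First I would fix an effective enumeration $(\Phi_e^X)_{e \in \omega}$ of all partial $X$-computable functions and consider the class $\Ccal$ of all $f : \omega^2 \to \set{0,1}$ such that, for every $e, a \in \omega$, if $\Phi_e^X(a)\da$ with value in $\set{0,1}$, then $f(e,a) = \Phi_e^X(a)$. The defining condition is the conjunction over $e,a,s$ of $X$-computable clauses ``if $\Phi_{e,s}^X(a)\da \in \set{0,1}$ then $f(e,a) = \Phi_{e,s}^X(a)$'', so $\Ccal$ is a $\Pi^0_1(X)$ subclass of $2^{\omega^2}$. Nonemptiness is immediate: define $g(e,a) = \Phi_e^X(a)$ when $\Phi_e^X(a)\da \in \set{0,1}$ and $g(e,a) = 0$ otherwise; then $g \in \Ccal$.

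Next I would invoke the low basis theorem relativized to $X$ to extract a member $f \in \Ccal$ with $(f \oplus X)' \leq_T X'$, i.e.\ $f$ low over $X$. This $f$ is the function we want.

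Finally, the universality property follows from totality: given any $X$-computable set $Z$, choose an index $e$ with $\Phi_e^X = \chi_Z$. Since $\Phi_e^X(a)\da \in \set{0,1}$ for every $a$, membership of $f$ in $\Ccal$ forces $f(e,a) = \chi_Z(a)$ for all $a$, hence $Z = \set{a : f(e,a) = 1}$.

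There is no real obstacle here beyond the bookkeeping of the $\Pi^0_1(X)$ presentation; the only substantive content is the relativization of the low basis theorem, which is standard. The mild point to be careful about is choosing the encoding so that $\Ccal \subseteq 2^{\omega^2}$ is literally a class of total $\set{0,1}$-valued functions (rather than a class of $\set{0,1,\uparrow}$ approximations), which is what allows us to demand equality with $\Phi_e^X$ only at inputs where $\Phi_e^X$ converges to a bit and leaves $f$ free on divergent or out-of-range inputs.
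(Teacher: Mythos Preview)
Your proposal is correct and follows exactly the approach the paper indicates in the sentence preceding the theorem: apply the relativized low basis theorem to the $\Pi^0_1(X)$ class of completions of the enumeration of partial $X$-computable $\{0,1\}$-valued functions. The paper does not give a detailed proof (it cites Mileti), but your write-up is precisely the argument sketched there.
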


\begin{lemma}\label{lem:universal-instance}
Fix an $n \in \omega$ and two principles $\Psf$ and $\Qsf$ such that $\Psf \leq_c \Qsf$.
Suppose that for any $f : \omega^2 \to \set{0,1}$ satisfying $f'' \leq_T \emptyset^{(n+2)}$,
there exists a computable instance $I$ of $\Psf$ such that for each $e \in \omega$, 
if $\{a \in \omega : f(e,a) = 1\}$ is infinite then it is not a solution to~$I$. Then the following holds:
\begin{itemize}
  \item[(i)] For any degree $\mathbf{d}$ low${}_2$ over $\emptyset^{(n)}$ there is a computable instance
  $U$ of $\Psf$ such that $\mathbf{d}$ does not bound a solution to $U$.
  \item[(ii)] There is no degree low${}_2$ over $\emptyset^{(n)}$ bounding $\Psf$.
  \item[(iii)] If every computable instance $I$ of $\Qsf$ has a solution low${}_2$ over $\emptyset^{(n)}$,
  then $\Qsf$ has no universal instance.
\end{itemize}
\end{lemma}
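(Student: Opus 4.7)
The plan is to establish (i) first by a direct diagonalization, and then read off (ii) and (iii) from it. Fix a degree $\mathbf{d}$ low${}_2$ over $\emptyset^{(n)}$, with representative set $D$. The idea, essentially Mileti's, is to package every $D$-computable set into a single $D$-computable ``catalogue'' $f \colon \omega^2 \to \{0,1\}$ whose columns $\{a : f(e,a) = 1\}$ range over all $D$-computable sets, apply the hypothesis to this $f$ to obtain a computable instance of $\Psf$ that diagonalizes against every column, and conclude that no $D$-computable solution can exist.

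Concretely, I would apply Theorem~\ref{thm:mileti-f-compute} relativized to $D$ to produce such an $f$ with the additional property that $f$ is low over $D$. The only nontrivial step is verifying the jump bound required by the hypothesis, namely $f'' \leq_T \emptyset^{(n+2)}$. Lowness of $f$ over $D$ gives $f' \leq_T (f \oplus D)' \leq_T D'$, so jumping both sides yields $f'' \leq_T D''$. Lowness${}_2$ of $D$ over $\emptyset^{(n)}$ unpacks as $(D \oplus \emptyset^{(n)})'' \leq_T \emptyset^{(n+2)}$, which in turn dominates $D''$, and chaining the inequalities gives $f'' \leq_T \emptyset^{(n+2)}$. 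The hypothesis then furnishes a computable instance $U$ of $\Psf$ such that no infinite column of $f$ is a solution to $U$. Any $D$-computable solution to $U$ would be an infinite column of $f$, contradiction; so $\mathbf{d}$ fails to bound a solution to $U$, proving (i).

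Part (ii) is immediate from (i), since a $\Psf$-bounding degree low${}_2$ over $\emptyset^{(n)}$ would bound a solution to \emph{every} computable instance of $\Psf$. For (iii), suppose $\Qsf$ has a universal instance $U$ and, by hypothesis, pick a solution $S$ to $U$ whose degree $\mathbf{d}$ is low${}_2$ over $\emptyset^{(n)}$. Universality of $U$ gives $\mathbf{d} \gg_{\Qsf} \emptyset$, and the computable reduction $\Psf \leq_c \Qsf$ propagates this to $\mathbf{d} \gg_{\Psf} \emptyset$: for any computable instance $I$ of $\Psf$, the reduction produces a computable instance $J$ of $\Qsf$ whose solutions uniformly compute solutions to $I$, and $\mathbf{d}$ bounds a solution to $J$. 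This contradicts (ii). The main obstacle is mere bookkeeping — correctly relativizing Mileti's construction to $D$ and tracking jumps through the two nested lowness hypotheses; once these are in place, the diagonalization is automatic.
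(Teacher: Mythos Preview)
Your proposal is correct and follows essentially the same approach as the paper: apply Mileti's Theorem~\ref{thm:mileti-f-compute} to a representative $D$ of $\mathbf{d}$ to get a catalogue $f$ low over $D$, verify $f'' \leq_T \emptyset^{(n+2)}$, and invoke the hypothesis. The only cosmetic difference is in (iii), where the paper directly exhibits the computable $\Qsf$-instance $J$ that $X$ fails to solve, whereas you route through (ii) via the observation that $\Psf \leq_c \Qsf$ transfers $\mathbf{d} \gg_{\Qsf} \emptyset$ to $\mathbf{d} \gg_{\Psf} \emptyset$; these are the same argument unpacked differently.
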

\begin{proof}\ 
\begin{itemize}
  \item[(i)]
Consider any set $X$ of degree low${}_2$ over $\emptyset^{(n)}$.
By Theorem~\ref{thm:mileti-f-compute}, there exists a function $f : \omega^2 \to \set{0,1}$
low over $X$, hence low${}_2$ over $\emptyset^{(n)}$, such that any $X$-computable set $Z$
is of the form $\{a \in \omega : f(e,a) = 1\}$ for some $e \in \omega$. 
Take a computable instance $I$ of $\Psf$ having no solution of the form $\{a \in \omega : f(e, a) = 1\}$
for any $e \in \omega$. Then $X$ does not compute a solution to $I$.
  \item[(ii)] Immediate from (i).
  \item[(iii)] Take any computable instance $U$ of $\Qsf$. By assumption, $U$ has a solution $X$ low${}_2$ over $\emptyset^{(n)}$.
  By (i), there exists an instance $I$ of $\Psf$ such that $X$ does not compute a solution to~$I$.
	As $\Psf \leq_c \Qsf$,
	there exists an $I$-computable (hence computable) instance $J$ of $\Qsf$ such that any solution to~$J$
	computes a solution to~$I$. Then $X$ does not compute a solution to~$J$,
	hence $U$ is not a universal instance.
\end{itemize}
\end{proof}

We will prove the following lemmas which, together with Lemma~\ref{lem:universal-instance},
are sufficient to deduce Theorem~\ref{thm:sts-semo-sads-bounding}
and Theorem~\ref{thm:sts-semo-sads-universal}.

\begin{lemma}\label{lem:sts2-universal}
Fix a set $X$. Suppose $f : \omega^2 \to \set{0,1}$ satisfies $f'' \leq_T X^{''}$.
There exists an $X$-computable stable coloring $g : [\omega] \to \omega$ such that
for all $e \in \omega$, if $\{a \in \omega : f(e, a) = 1\}$ is infinite then it
is not thin for $g$.
\end{lemma}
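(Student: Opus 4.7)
The plan is to follow Mileti's template from \cite{miletipartition}, adapted to the thin-set conclusion. Write $A_e := \{a : f(e,a) = 1\}$ and $\tilde{g}(x) := \lim_y g(x,y)$. Because $g$ is to be stable, $g([A_e]^2)$ differs from $\tilde{g}(A_e)$ only finitely, so it suffices to produce an $X$-computable stable $g$ such that, for every pair $(e,c)$ with $A_e$ infinite, some $x \in A_e$ satisfies $\tilde{g}(x) = c$. Two applications of Shoenfield's limit lemma to the reduction $f \leq_T X''$ (which follows from $f'' \leq_T X''$) yield an $X$-computable $\tilde{f}(e,a,s,t)$ with $\lim_s \lim_t \tilde{f}(e,a,s,t) = f(e,a)$. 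This double-limit approximation is the only handle on $f$ available during the construction.

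I enumerate the pairs $(e,c)$ as requirements $R_n = (e_n, c_n)$, partition $\omega$ into rapidly growing blocks $B_0 < B_1 < \cdots$, and in each $B_k$ reserve, for every $i \leq k$, a candidate position $p_{i,k} \in B_k$ for requirement $R_i$. The candidate $p_{i,k}$ is chosen $X$-computably to be a block element for which the current snapshot of $\tilde{f}(e_i, \cdot, s, t)$ suggests membership in $A_{e_i}$, with ties broken by priority in $i$. For $y > x = p_{i,k}$, the value $g(x, y)$ is set to $c_i$ or $0$ according to what the snapshot of $\tilde{f}(e_i, p_{i,k}, \cdot, \cdot)$ available by stage $y$ says about $p_{i,k} \in A_{e_i}$; non-candidates are sent to $0$. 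A finite-injury discipline reconciles the double-limit approximation of $f$ with the single-limit stability demanded of $g$.

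The verification has three parts: (i) $g$ is $X$-computable by construction, since each value depends only on a finite initial segment of $\tilde{f}$; (ii) $g$ is stable, with $\tilde{g}(p_{i,k}) = c_i$ when $p_{i,k} \in A_{e_i}$ and $\tilde{g}(x) = 0$ otherwise; (iii) for each $i$ with $A_{e_i}$ infinite, $A_{e_i}$ meets infinitely many blocks $B_k$, and on such a block the candidate $p_{i,k}$ eventually lies in $A_{e_i}$, whence $c_i \in \tilde{g}(A_{e_i}) \subseteq g([A_{e_i}]^2)$, so $A_{e_i}$ is not thin for $g$.

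The main obstacle is point (ii). Because the approximation of $f$ is only a double limit, the naive diagonal $\tilde{f}(e,a,y,y)$ need not converge to $f(e,a)$, so one cannot simply read off $g(x,y)$ from a diagonal snapshot. The construction must use a finite-injury argument in which a candidate's color commitment may be revised finitely often as later snapshots falsify earlier ones, with block sizes chosen large enough to absorb the revisions without losing any requirement. Proving that on every candidate the injury process terminates, and consequently that $g(x,\cdot)$ genuinely stabilizes to the intended value $\tilde g(x)$, is the heart of the argument and the place where the precise form of the hypothesis $f'' \leq_T X''$ is used.
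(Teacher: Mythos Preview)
Your reduction to building the limit function $\tilde g = c$ and then pulling back through one application of the limit lemma is correct, and the paper does the same. The gap is in the construction of $c$ itself.

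Look at your item~(ii): you want $\tilde g(p_{i,k}) = c_i$ exactly when $p_{i,k}\in A_{e_i}$, and $\tilde g(x)=0$ otherwise. But $\tilde g$, being the limit of an $X$-computable function, is $X'$-computable, whereas the predicate ``$p_{i,k}\in A_{e_i}$'' is a fact about $f$. The hypothesis $f''\leq_T X''$ does \emph{not} give $f\leq_T X'$ (a low${}_2$ set need not be $\Delta^0_2$), so no $X'$-computable object can track $f$-membership pointwise in the way you describe. The same problem undermines item~(iii): the candidate $p_{i,k}$ is chosen from an $X$-computable snapshot, and there is no reason the snapshot ever agrees with $A_{e_i}$ on the chosen element. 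Your last paragraph concedes that stability is the hard part and promises that ``the precise form of the hypothesis $f''\leq_T X''$ is used'' to terminate the injury, but the only use you actually make of the hypothesis is the weaker consequence $f\leq_T X''$, via the double limit~$\tilde f$. That is not enough: a finite-injury construction of $c$ needs an $X'$-computable guess, at each stage, as to which requirement is still unsatisfied, and your setup provides none.

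The paper's resolution is Kleene's recursion theorem. One assumes in advance a $\Phi^{X'}$-index $d$ for the function $c$ under construction; then each requirement
\[
\mathcal R_{e,i}:\quad Z_e\text{ is finite }\ \vee\ (\exists a)\bigl[f(e,a)=1\ \wedge\ \Phi^{X'}_d(a)=i\bigr]
\]
is a \emph{fixed} $\Sigma^f_2$ sentence, with an index $m_{e,i}$ computable uniformly in $e,i$. Now the full hypothesis $f''\leq_T X''$ is used: one application of the limit lemma over $X'$ yields an $X'$-computable $g(m,\cdot)$ with $\lim_s g(m,s)=f''(m)$, and the construction simply sets $c(s)=i$ for the least $\langle e,i\rangle<s$ with $g(m_{e,i},s)=0$. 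The self-reference is precisely what converts the dynamic question ``has my construction already satisfied $\mathcal R_{e,i}$?'' into a static $\Sigma^f_2$ fact whose truth value lives in $f''$; this is the step your block-and-candidate scheme is missing.
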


\begin{lemma}\label{lem:sads-universal}
Fix a set $X$. Suppose $f : \omega^2 \to \set{0,1}$ satisfies $f'' \leq_T X^{''}$.
There exists a stable $X$-computable linear order $L$ such that for all $e \in \omega$,
if $\{a \in \omega : f(e, a) = 1\}$ is infinite then it is neither an ascending
nor a descending sequence in~$L$.
\end{lemma}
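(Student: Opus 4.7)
I will follow Mileti's approach from the proof for $\srt^2_2$ (cf.\ Lemma~\ref{lem:sts2-universal}), constructing $L$ stagewise and $X$-computably while using the hypothesis $f''\leq_T X''$ to guide a priority argument. At stage $s+1$, the element $s$ is inserted into the current finite stable order on $\{0,\dots,s-1\}$ either as the new maximum of the current $\omega$-fragment $A_s$ or as the new minimum of the current $\omega^*$-fragment $B_s$. Setting $A=\bigcup_s A_s$ and $B=\bigcup_s B_s$, a default padding clause ensures both are infinite, so $L$ has order type $\omega+\omega^*$ with $\omega$-part $A$ and $\omega^*$-part $B$.

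Writing $S_e=\{a:f(e,a)=1\}$, observe that an infinite subset of such an $L$ has order type $\omega$ (resp.\ $\omega^*$) if and only if it is contained in $A$ (resp.\ $B$). Thus the lemma amounts to meeting, for every $e$, the requirements
\[
R_e^+\!:\ S_e\text{ infinite}\ \Rightarrow\ S_e\cap A\neq\emptyset,\qquad R_e^-\!:\ S_e\text{ infinite}\ \Rightarrow\ S_e\cap B\neq\emptyset.
\]
The obstruction is that $X$ cannot decide membership in $S_e$, since $f$ need not be $X$-computable. I handle this via a priority argument on the tree $2^{<\omega}$: at level $e$ the outcome $\infty$ guesses that $S_e$ is infinite. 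The true path $\pi$ defined by $\pi(e)=\infty\iff S_e$ is infinite is $\Pi^0_2(f)$, hence $X''$-computable by the hypothesis $f''\leq_T X''$. Applying Shoenfield's limit lemma twice yields an $X$-computable $\hat\pi(e,s,t)$ with $\pi(e)=\lim_s\lim_t\hat\pi(e,s,t)$, which lets the $X$-computable construction visit at each stage a guess-node $\sigma_s\in 2^{<\omega}$ whose $\liminf$ equals $\pi$.

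\textbf{Main obstacle.} The hard part will be reconciling the irrevocability of each $<_L$-commitment with the mutability of the tree guesses. I resolve this in the standard way: at an $\infty$-node for a requirement $R_e^\pm$, the strategy, when visited, uses an $X$-computable double-limit approximation to $f$ to identify a fresh apparent element of $S_e$, and tags the next newly introduced element as a witness to be placed on the designated side. Nodes off the true path fire only finitely often, so any incorrect witnesses they install contribute only finitely many extra elements to one side — harmless for stability of $L$ and for the true requirements. Along the true path, every $\infty$-strategy is visited cofinally; since the $f$-approximation is eventually correct and $S_e$ really is infinite whenever $\pi(e)=\infty$, each such strategy eventually installs a genuine witness, satisfying $R_e^\pm$. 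Verifying that $L\leq_T X$, that $L$ is stable of type $\omega+\omega^*$ (via the padding clause), and that every requirement is met is then routine from the construction.
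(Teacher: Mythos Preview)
Your plan has a genuine gap. By inserting each element $s$ irrevocably as the new maximum of $A_s$ or the new minimum of $B_s$, you make the $\omega$-part $A=\bigcup_s A_s$ an $X$-computable set. But the lemma must in particular apply to the function $f$ supplied by Theorem~\ref{thm:mileti-f-compute}, which is low over $X$ (hence certainly satisfies $f''\leq_T X''$) and has the property that \emph{every} $X$-computable set equals some $S_e=\{a:f(e,a)=1\}$. For this $f$, your infinite $X$-computable set $A$ is itself $S_{e_0}$ for some $e_0$; then $S_{e_0}\cap B=\emptyset$, so $S_{e_0}\subseteq A$ is an ascending sequence in your $L$ and $R_{e_0}^-$ fails outright. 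No tree machinery or double-limit approximation to $f$ can repair this: at stage $s+1$ you must commit $s$ to a side, yet a double-limit approximation to $f(e,s)$ carries no reliable information at any fixed stage, and your claim that ``the $f$-approximation is eventually correct'' does not yield a correct value at the moment of placement. (Relatedly, obtaining a $\liminf$-style stagewise guess $\sigma_s$ from a bare double-limit approximation to $\pi$ is not automatic either.)

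The paper's proof escapes exactly this obstruction by \emph{not} fixing the side of an element when it is introduced. It builds the $\omega$-part $U$ only as a $\Delta^0_2(X)$ set via approximations $U_s$ that may change: one sets $u<_L s$ iff $u\in U_s$ at the moment $s$ enters, so $L$ remains $X$-computable, but whole intervals $[u,s]$ of elements are later pushed into or out of $U$ by a decision-maker/follower mechanism. Kleene's recursion theorem provides an index $d$ for $U$ in advance, so each requirement (``$S_e$ is finite or some $a\in S_e$ lies in $U$'', and dually) is a single $\Sigma^f_2$ sentence whose truth has an $X$-computable double-limit approximation via $f''\leq_T X''$. The construction then reacts to whether a requirement \emph{appears} satisfied: if $\mathcal R_{2e}$ persistently looks false, a tail of decision-makers drags cofinitely many elements into $U$, which, because $S_e$ is infinite, forces some $a\in S_e$ into $U$ and makes $\mathcal R_{2e}$ true after all. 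The freedom to move elements between $U$ and $\overline U$---absent from your irrevocable scheme---is precisely what allows the argument to close while keeping $L$ an $X$-computable linear order.
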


Before proving the two remaining lemmas, we relativize the results to colorings
over arbitrary tuples.

\begin{theorem}\label{thm:stsn-bounding}
For any $n$, there exists no degree low${}_2$ over $\emptyset^{(n)}$
bounding $\sts(n+2)$.
\end{theorem}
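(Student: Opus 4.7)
The plan is to apply Lemma~\ref{lem:universal-instance}(ii) with $\Psf = \sts(n+2)$. So I would start by fixing $f : \omega^2 \to \set{0,1}$ with $f'' \leq_T \emptyset^{(n+2)}$ and aim to produce a computable stable $h : [\omega]^{n+2} \to \omega$ such that no infinite $\set{a : f(e,a) = 1}$ is thin for $h$.

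Since $(\emptyset^{(n)})'' = \emptyset^{(n+2)}$, instantiating Lemma~\ref{lem:sts2-universal} with $X = \emptyset^{(n)}$ supplies an $\emptyset^{(n)}$-computable stable coloring $g : [\omega]^2 \to \omega$ with the diagonalization property at level $2$. The remaining work is to transfer this pair-coloring ``up'' from $\emptyset^{(n)}$ to a computable $(n+2)$-ary coloring, without losing either stability or the diagonalization.

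To do so, I would use Shoenfield's limit lemma $n$ times to obtain a computable $G : \omega^{n+2} \to \omega$ with
$$
g(x_1, x_2) = \lim_{s_n} \lim_{s_{n-1}} \cdots \lim_{s_1} G(x_1, x_2, s_1, \dots, s_n),
$$
and then define
$$
h(x_1, x_2, y_1, \dots, y_n) = G(x_1, x_2, y_n, y_{n-1}, \dots, y_1),
$$
so that the largest coordinate of the input tuple plays the role of the innermost limit variable $s_1$. Then $h$ is computable, and stability of $h$ in its last coordinate $y_n$ follows immediately from existence of the innermost limit $\lim_{s_1} G$ in the representation of $g$.

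The key technical step, and the one I expect to be the chief obstacle, is the thinness transfer: every infinite $A$ thin for $h$ must also be thin for $g$. Given $x_1 < x_2 \in A$ and $L = g(x_1, x_2)$, the strategy is to realize the iterated limit inside $A$ by successively choosing $s_n \in A$ above $x_2$ and above the outermost threshold, then $s_{n-1} \in A$ above $s_n$ and above the threshold determined by $s_n$, and so on down to $s_1 \in A$ above $s_2$; infiniteness of $A$ guarantees that suitable witnesses always exist. By construction $G(x_1, x_2, s_1, \dots, s_n) = L$, and the tuple $(x_1, x_2, s_n, s_{n-1}, \dots, s_1)$ lies in $[A]^{n+2}$ with $h$-value $L$. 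Hence any color avoided by $h$ on $[A]^{n+2}$ is also avoided by $g$ on $[A]^2$, so the diagonalization transfers from $g$ to $h$, and Lemma~\ref{lem:universal-instance}(ii) then delivers the theorem. The reversed matching between tuple coordinates and limit depth is precisely what makes the ``smallest threshold outermost, largest innermost'' picking scheme fit inside $A$.
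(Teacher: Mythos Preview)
Your proposal is correct and follows the same route as the paper: instantiate Lemma~\ref{lem:sts2-universal} at $X=\emptyset^{(n)}$, lift the resulting $\emptyset^{(n)}$-computable stable pair-coloring to a computable stable $(n+2)$-coloring via $n$ applications of Shoenfield's limit lemma, observe that thinness passes down, and invoke Lemma~\ref{lem:universal-instance}(ii). The paper compresses the lifting into one sentence (one Shoenfield step, implicitly iterated), whereas you spell it out; your coordinate reversal is harmless but is only an artifact of your indexing---if you let each Shoenfield application append its new limit variable as the \emph{last} coordinate (so that the innermost limit is over the largest argument), then $h=G$ works directly and no reversal is needed.
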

\begin{proof}
Apply Lemma~\ref{lem:sts2-universal} relativized to $X = \emptyset^{(n)}$
together with Lemma~\ref{lem:universal-instance}.
Simply notice that if $f : [\omega]^n \to \omega$ is a $\emptyset'$-computable
coloring, the computable coloring $g : [\omega]^{n+1} \to \omega$ obtained by an application
of Schoenfield's limit lemma is such that every infinite set thin for $g$ is thin for~$f$.
\end{proof}

\begin{theorem}\label{thm:stsn-universal}
For any $n$, no principle $\Psf$ having an $\omega$-model 
with only low${}_2$ over $\emptyset^{(n)}$ sets
and such that $\sts(n+2) \leq_c \Psf$ admits a universal instance.
\end{theorem}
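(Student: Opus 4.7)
The plan is to mimic the proof of Theorem~\ref{thm:sts-semo-sads-universal} verbatim, lifted to the relativized setting already established in Theorem~\ref{thm:stsn-bounding}, by invoking clause~(iii) of Lemma~\ref{lem:universal-instance} with $\Psf = \sts(n+2)$ and with $\Qsf$ equal to the given principle.

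First I would verify the diagonalization hypothesis required by Lemma~\ref{lem:universal-instance}: given any $f : \omega^2 \to \set{0,1}$ satisfying $f'' \leq_T \emptyset^{(n+2)}$, one needs a computable instance of $\sts(n+2)$ no solution of which is of the form $\set{a : f(e,a)=1}$. Applying Lemma~\ref{lem:sts2-universal} relativized to $X = \emptyset^{(n)}$ yields a stable $\emptyset^{(n)}$-computable coloring $g : [\omega]^2 \to \omega$ with exactly the desired diagonalization property against $f$, since then $X'' = \emptyset^{(n+2)}$. Iterating Schoenfield's limit lemma $n$ times, exactly as in the proof of Theorem~\ref{thm:stsn-bounding}, I would then convert $g$ into a computable coloring $\hat g : [\omega]^{n+2} \to \omega$ whose infinite thin sets are automatically thin for $g$, which produces the desired computable instance of $\sts(n+2)$.

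Second, I would use the $\omega$-model hypothesis to supply the remaining input to clause~(iii) of Lemma~\ref{lem:universal-instance}. If $\Psf$ has an $\omega$-model $\Mcal$ containing only sets low${}_2$ over $\emptyset^{(n)}$, then any computable instance of $\Psf$ belongs to $\Mcal$ and hence admits a solution in $\Mcal$, which is by assumption low${}_2$ over $\emptyset^{(n)}$. Combined with the diagonalization established in the previous step and the hypothesis $\sts(n+2) \leq_c \Psf$, Lemma~\ref{lem:universal-instance}(iii) then directly yields that $\Psf$ admits no universal instance.

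All the genuine content is already packaged in Lemma~\ref{lem:sts2-universal}, namely the construction of a stable $\emptyset^{(n)}$-computable coloring of pairs diagonalizing against all rows of $f$ of sufficiently low complexity. Once that lemma and its obvious relativization from $\emptyset$ to $\emptyset^{(n)}$ are granted, the argument above is a routine assembly of the Schoenfield iteration and the template given by Lemma~\ref{lem:universal-instance}, so I expect no genuine obstacle in the present theorem beyond verifying that the relativization goes through uniformly in $X$.
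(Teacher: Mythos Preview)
Your proposal is correct and follows essentially the same approach as the paper: the paper's proof is a one-line reference to the reasoning of Theorem~\ref{thm:sts-semo-sads-universal} together with the Schoenfield iteration observed in the proof of Theorem~\ref{thm:stsn-bounding}, and you have simply spelled out those ingredients explicitly. Your invocation of Lemma~\ref{lem:universal-instance}(iii) with the relativized Lemma~\ref{lem:sts2-universal} and the $n$-fold limit-lemma step is exactly the intended argument.
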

\begin{proof}
Same reasoning as Theorem~\ref{thm:sts-semo-sads-universal}
using the notice in the proof of Theorem~\ref{thm:stsn-bounding}.
\end{proof}

\begin{theorem}
For any $n$, none of
$\rt^{n+2}_2$, $\rwkl[\emptyset^{(n+1)}]$, $\fs(n+2)$, $\ts(n+2)$ and their stable versions
admits a universal instance.
\end{theorem}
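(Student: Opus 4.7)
The plan is to deduce this theorem directly from Theorem~\ref{thm:stsn-universal}. For each principle $\Psf$ in the list, I need to verify two conditions: (a) $\sts(n+2) \leq_c \Psf$, and (b) $\Psf$ admits an $\omega$-model in which every set is low${}_2$ over $\emptyset^{(n)}$. This mirrors the way Corollary~\ref{thm:rt22-sts2-universal} handled the case $n = 0$, and the argument for general $n$ is essentially a relativization.

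For the computable reductions (a): the inclusion $\sts(n+2) \leq_c \ts(n+2)$ is trivial, and $\ts(n+2) \leq_c \fs(n+2)$ (stable version included) holds because every free set is thin. For $\rt^{n+2}_2$, given an instance $f : [\omega]^{n+2} \to \omega$ of $\sts(n+2)$, I would set $g(\sigma) = f(\sigma) \bmod 2$; any infinite $g$-homogeneous set $H$ satisfies $f([H]^{n+2}) \subseteq 2\N$ or $f([H]^{n+2}) \subseteq 2\N+1$, so $H$ avoids at least one color and is thin for~$f$. Stability transfers: if $f$ is stable then so is $g$, hence also $\sts(n+2) \leq_c \srt^{n+2}_2$. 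For $\rwkl[\emptyset^{(n+1)}]$ and its stable version, I would cite Flood~\cite{Flood2012} that this principle implies $\rt^{n+2}_2$ in a computable manner, and then chain the previous reduction.

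For the $\omega$-models (b): the key ingredient is Jockusch's theorem that every computable $f : [\omega]^{n+2} \to 2$ has an infinite homogeneous set $H$ with $H^{(n+1)} \leq_T \emptyset^{(n+1)}$. Such an $H$ is low${}_2$ over $\emptyset^{(n)}$ because $(H \oplus \emptyset^{(n)})'' \leq_T H^{(n+2)} \oplus \emptyset^{(n+2)} \leq_T \emptyset^{(n+2)}$. A standard iteration (as in Theorem~3.1 of~\cite{cholak2001strength}, relativized to~$\emptyset^{(n)}$) produces an $\omega$-model of $\rt^{n+2}_2$ whose second-order part consists entirely of sets that are low${}_2$ over $\emptyset^{(n)}$. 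Since every other principle in the list (including $\rwkl[\emptyset^{(n+1)}]$, $\fs(n+2)$, $\ts(n+2)$ and their stable versions) is provable in $\rca$ from $\rt^{n+2}_2$, the same model serves for all of them, and Theorem~\ref{thm:stsn-universal} then yields the conclusion.

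The main obstacle I expect is the computable reduction involving $\rwkl[\emptyset^{(n+1)}]$, because this principle is phrased in terms of trees rather than colorings and its relationship to $\sts(n+2)$ is not purely combinatorial; one has to carefully invoke the Ramsey-type reading of weak König's lemma relative to~$\emptyset^{(n+1)}$ so that the reduction remains computable in the $\sts(n+2)$-instance (and not merely in $\emptyset^{(n+1)}$). The relativization of Jockusch's low${}_{n+1}$ construction is technically routine but must be stated so that the iteration preserves low${}_2$-ness over $\emptyset^{(n)}$ uniformly.
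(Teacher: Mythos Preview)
Your overall plan coincides with the paper's proof: verify $\sts(n+2) \leq_c \Psf$ for each listed $\Psf$, observe that each is a consequence of $\rt^{n+2}_2$ over $\omega$-models, invoke the Cholak--Jockusch--Slaman $\omega$-model of $\rt^{n+2}_2$ consisting of sets low${}_2$ over $\emptyset^{(n)}$, and apply Theorem~\ref{thm:stsn-universal}. The paper's proof says no more than this, so on structure you are exactly aligned.

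There is, however, a genuine error in your treatment of $\rwkl[\emptyset^{(n+1)}]$. You propose to cite Flood for the reduction $\rt^{n+2}_2 \leq_c \rwkl[\emptyset^{(n+1)}]$, but this fails already at $n=0$: Jockusch constructed a computable $2$-coloring of pairs with no infinite $\Sigma^0_2$ (hence no $\Delta^0_2$) homogeneous set, whereas every computable instance of $\rwkl[\emptyset']$ has a $\Delta^0_2$ solution --- the $\emptyset'$-computable tree has a $\emptyset'$-computable leftmost path $p$, and one of $p^{-1}(0)$, $p^{-1}(1)$ is an infinite $\Delta^0_2$ set homogeneous for $p$. Therefore $\rt^2_2 \not\leq_c \rwkl[\emptyset']$, and Flood certainly does not prove otherwise. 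The reduction one actually wants goes through the \emph{stable} Ramsey theorem: $\srt^2_2 \equiv_c D^2_2 \leq_c \rwkl[\emptyset']$ by encoding the $\Delta^0_2$ limit set as the unique path of a $\emptyset'$-computable tree, and this is what relativizes to higher~$n$. Since you already established $\sts(n+2) \leq_c \srt^{n+2}_2$ via the mod-$2$ trick, you should route the $\rwkl$ case through $\srt^{n+2}_2$ rather than $\rt^{n+2}_2$. Your instinct that this was the delicate step was right; the repair is to use the stable intermediary.
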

\begin{proof}
Fix an $n \in \omega$. Each of the above cited principles $\Psf$
satisfies $\sts(n+2) \leq_c \Psf$ and is a consequence of $\rt^{n+2}_2$ over $\omega$-models.
Cholak \& al. \cite{cholak2001strength} proved the existence of an $\omega$-model
of $\rt^{n+2}_2$ having only low${}_2$ over $\emptyset^{(n)}$ sets. 
Apply Theorem~\ref{thm:stsn-universal}.
\end{proof}

We now turn to the proofs of Lemma~\ref{lem:sts2-universal},
and Lemma~\ref{lem:sads-universal}.

\begin{proof}[Proof of Lemma~\ref{lem:sts2-universal}]
For each $e \in \omega$, let $Z_e = \set{a \in \omega : f(e,a) = 1}$.
The proof is very similar to \cite[Theorem~5.4.2.]{miletipartition}.
We build a $\emptyset'$-computable function $c : \omega \to \omega$
such that for all $e \in \omega$, if $Z_e$ is infinite then it
is not thin for~$c$. Given such a function $c$, we can then apply Schoenfield's limit lemma
to obtain a stable computable function $h : [\omega]^2 \to \omega$ such that
for each $x \in \omega$, $\lim_s h(x,s) = c(x)$.
Every set thin for $h$ is thin for $c$, and therefore for all $e \in \omega$, 
if $Z_e$ is infinite then it
is not thin for~$h$.

Suppose by Kleene's fixpoint theorem that
we are given a Turing index $d$ of the function $c$.
The construction is done by a finite injury priority argument
satisfying the following requirements for each $e, i \in \omega$:

\bigskip
$\Rcal_{e, i}$ : $Z_e$ is finite or $(\exists a)[f(e,a) = 1 \mbox{ and } \Phi^{\emptyset'}_d(a) = i]$
\bigskip

The requirements are ordered in a standard way, that is, following the pairing of the indexes.
Notice that each of these requirement is $\Sigma^f_2$, and furthermore we can effectively find an index
for each as such. Therefore, for each $e$ and $i \in \omega$, we can effectively find an integer
$m_{e,i}$ such that $R_{e,i}$ is satisfied if and only if $m_{e,i} \in f''$. 
By Schoenfield's limit Lemma relativized to $\emptyset'$ and low${}_2$-nes of $f$, 
there exists a $\emptyset'$-computable function $g : \omega^2 \to 2$ such that for all $m$,
we have $m \in f'' \biimp \lim_s g(m, s) = 1$ and $m \not \in f'' \biimp \lim_s g(m, s) = 0$.
Notice that for all $e$ and $i \in \omega$, $R_{e,i}$ is satisfied if and only if $\lim_s g(m_{e,i}, s) = 1$.

At stage $s$, assume we have defined $c(u)$ for every $u < s$.
If there exists a least strategy $\Rcal_{e,i}$ (in priority order) with $\tuple{e,i} < s$
such that $g(m_{e,i}, s) = 0$, set $c(s) = i$. Otherwise set $c(s) = 0$.
This ends the construction. We now turn to the verification.

\begin{claim}
Every requirement $\Rcal_{e,i}$ is satisfied.
\end{claim}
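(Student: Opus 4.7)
The plan is to prove the claim by induction on the priority ordering $\tuple{e,i}$, using the finite injury structure together with Kleene's fixed point equation $\Phi^{\emptyset'}_d = c$. So I would fix $\tuple{e,i}$, assume all higher priority requirements $\Rcal_{e',i'}$ with $\tuple{e',i'} < \tuple{e,i}$ are already satisfied, and argue that $\Rcal_{e,i}$ is satisfied as well.

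By the inductive hypothesis each such $\Rcal_{e',i'}$ is satisfied, which by the definition of $m_{e',i'}$ means $m_{e',i'} \in f''$, hence $\lim_s g(m_{e',i'},s) = 1$. Because there are only finitely many such higher priority requirements, there is a stage $s_0$ beyond which $g(m_{e',i'},s) = 1$ for every such pair and also $\tuple{e,i} < s$. Now suppose for contradiction that $\Rcal_{e,i}$ fails, which by choice of $m_{e,i}$ means $\lim_s g(m_{e,i},s) = 0$. Pick $s_1 \geq s_0$ past which $g(m_{e,i},s) = 0$ as well. Then at every stage $s \geq s_1$ the least violated strategy in priority order is $\Rcal_{e,i}$, so the construction sets $c(s) = i$.

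To close the argument, I would use Kleene's fixed point: since $d$ is an index for $c$ as a $\emptyset'$-computable function, $\Phi^{\emptyset'}_d(a) = c(a) = i$ for every $a \geq s_1$. If $Z_e$ is infinite, there exists $a \geq s_1$ with $f(e,a) = 1$, and then the pair $(a,i)$ witnesses the existential in $\Rcal_{e,i}$, contradicting our assumption. Hence either $Z_e$ is finite or $\Rcal_{e,i}$ is satisfied directly, and in either case the requirement holds.

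The main obstacle I expect is organizational rather than mathematical: one must check carefully that the $\Sigma^f_2$ index $m_{e,i}$ can indeed be chosen uniformly from $(e,i)$ so that the approximation $g$ via Shoenfield's limit lemma relativized to $\emptyset'$ applies simultaneously to all requirements, and one must verify that the construction is genuinely $\emptyset'$-computable with index $d$ supplied by the fixed point theorem, so that the self-reference used in the final paragraph is legitimate. Once these ingredients are in place, the induction proceeds as above.
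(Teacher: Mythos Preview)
Your proposal is correct and follows essentially the same argument as the paper: induction on the priority order, using the inductive hypothesis to conclude that $g(m_{e',i'},s)$ stabilizes to $1$ for higher-priority pairs, then assuming $\Rcal_{e,i}$ fails to force $c(s)=i$ cofinitely, and finally invoking the fixed-point identity $\Phi^{\emptyset'}_d=c$ together with the infinitude of $Z_e$ to derive the contradiction. Your added remarks about the uniformity of the $m_{e,i}$'s and the legitimacy of the self-reference are appropriate side comments but not new mathematical content beyond what the paper already sets up in the construction.
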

\begin{proof}
By induction over ordered pairs $\tuple{e,i}$
in lexicographic order. Suppose that $R_{e',i'}$ is satisfied for all $\tuple{e',i'} < \tuple{e,i}$, but
$\Rcal_{e,i}$ is not satisfied. Then there exists a threshold $t \geq \tuple{e,i}$ such that $g(m_{e',i'}, s) = 1$
for all $\tuple{e',i'} < \tuple{e,i}$ and $g(m_{e,i}, s) = 0$ whenever $s \geq t$.
By construction, $c(s) = i$ for every $s \geq t$.
As $Z_e$ is infinite, there exists an element $s \in Z_e$ such that $c(s) = i$,
so $Z_e$ is not thin for $c$ with witness $i$ and therefore $\Rcal_{e,i}$ is satisfied. Contradiction.
\end{proof}
\end{proof}

\begin{proof}[Proof of Lemma~\ref{lem:sads-universal}]
For each $e \in \omega$, let $Z_e = \set{a \in \omega : f(e,a) = 1}$.
The proof is very similar to \cite[Theorem~5.4.2.]{miletipartition}.
We build a $\Delta^0_2$ set $U$ together with a stable computable linear order $L$
such that $U$ is the $\omega$ part of $L$, that is, 
$U$ is the collection of elements $L$-below cofinitely many other elements.
We furthermore ensure that for each $e \in \omega$, if $Z_e$ is infinite, then 
it intersects both $U$ and $\overline{U}$. Therefore, if $Z_e$ is infinite,
it is neither an ascending, nor a descending sequence in $L$ as otherwise
it would be included in either $U$ or $\overline{U}$.

Assume by Kleene's fixpoint theorem that we are given the Turing index $d$ of~$U$.
The set $U$ is built by a finite injury priority construction
with the following requirements for each $e \in \omega$:

\begin{itemize}
  \item $\Rcal_{2e}$ : $Z_e$ is finite or $(\exists a)[f(e,a) = 1 \mbox{ and } \Phi^{\emptyset'}_d(a) = 1]$
  \item $\Rcal_{2e+1}$ : $Z_e$ is finite or $(\exists a)[f(e,a) = 1 \mbox{ and } \Phi^{\emptyset'}_d(a) = 0]$
\end{itemize}

Notice again that each of these requirement is $\Sigma^f_2$, and furthermore we can effectively find an index
for each as such. Therefore, for each $i \in \omega$, we can effectively find an $m_i$ such that $R_i$
is satisfied if and only if $m_i \in f''$. By two applications of Schoenfield's limit Lemma and
low${}_2$-ness of $f$, there exists a computable function $g : \omega^3 \to 2$ such that for all $m \in \omega$,
we have $m \in f'' \biimp \lim_t \lim_s g(m, s, t) = 1$ and $m \not \in f'' \biimp \lim_t \lim_s g(m, s, t) = 0$.
Notice that for all $i \in \omega$, 
$$
R_i \mbox{ is satisfied } \biimp \lim_t \lim_s g(m_i, s, t) = 1
$$

At stage $0$, $U_0 = \emptyset$ and every integer is a \emph{decision-maker} and \emph{follows} itself.
We say that $\Rcal_i$ \emph{requires attention for $u$ at stage $s$}
if $i \leq u \leq s$, $u$ is \emph{decision-maker} and $g(m_i, s, u) = 0$.
At stage $s+1$, assume we have decided $u <_L v$ or $u >_L v$ for every $u, v < s$.
Set $u <_L s$ if $u \in U_s$ and $u >_L s$ if $u \not \in U_s$.
Initially set $U_{s+1} = U_s$. For each decision-maker $u \leq s$ which has not been claimed at stage $s+1$
and for which some requirement $\Rcal_i$, $i < u$ requires attention, say that the least such $\Rcal_i$ \emph{claims} $u$
and act as follows.
\begin{itemize}
	\item[(a)] If $i = 2e$ and $u \not \in U_s$, then add $[u,s]$ to $U_{s+1}$.
	Elements of $[u+1, s]$ \emph{follow~$u$} and are no more considered as decision-makers from now on and at any further stage.
	\item[(b)] If $i = 2e+1$ and $u \in U_s$, then remove $[u,s]$ from $U_{s+1}$.
	As well, elements of $[u+1, s]$ are no more decision-makers and \emph{follow~$u$}.
\end{itemize}
The go to next decision-maker $u \leq s$. This ends the construction.
An immediate verification shows that at every stage, 
\begin{itemize}
	\item if $u$ stops being a decision-maker it never becomes again a decision-maker
	\item if $u$ follows $v$ then $v \leq u$, $v$ is a decision-maker, every $w$ between $v$ and $u$ follows $v$
	and thus $u$ will never follow any $w > v$.
\end{itemize}
So the decision-maker that $u$ follows eventually stabilizes. As well, because $g$ is limit-computable,
each decision-maker eventually stops increasing the number of followers and therefore
there are infinitely many decision-makers.

\begin{claim}
$L$ is a linear order.
\end{claim}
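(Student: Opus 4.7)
First I would note that $<_L$ is automatically irreflexive, total, and antisymmetric: at stage $v+1$ the construction decides, for each $u < v$, exactly one of $u <_L v$ or $v <_L u$ according to whether $u \in U_v$, and this decision is never revised at any later stage. Hence the substantive content of the claim is transitivity.

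My plan is to derive transitivity from the following invariant (Inv), proved by induction on $s$: the set $U_{s+1} \cap \{0, \ldots, s\}$ is an initial segment (downward-closed set) of $<_L$ on $\{0, \ldots, s\}$. Granted (Inv), transitivity follows uniformly: given $a <_L b$ and $b <_L c$, set $m = \max(a, b, c)$ in the natural order, so that the two smaller elements lie in $\{0, \ldots, m-1\}$ and their $<_L$-ordering has been fixed by stage $m$. Applying (Inv) at stage $m+1$ to the chain $a <_L b <_L c$ and using downward-closure of $U_m \cap \{0, \ldots, m-1\}$ then yields $a <_L c$ in a short case analysis on which of $a, b, c$ equals $m$.

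To prove (Inv) I would use a nested induction: the outer induction is on stages and the inner induction on the successive claims processed within stage $s+1$. Immediately after $s$ is placed (before any claim), (Inv) holds because $s \notin U_s$ is slotted in exactly above $U_s \cap \{0, \ldots, s-1\}$, which is an initial segment by the outer induction hypothesis. The real work is to verify that each claim preserves the initial-segment property. For this I would maintain the stronger auxiliary invariant that for every current decision-maker $u$ the integer interval $[u, s]$ is a contiguous $<_L$-block sitting exactly at the $U/\overline{U}$ boundary---the $<_L$-top of $U$ if $u \in U$ and the $<_L$-bottom of $\overline{U}$ if $u \notin U$. Under this auxiliary invariant a case (a) claim adds the $<_L$-bottom block of $\overline{U}$ to $U$, and a case (b) claim removes the $<_L$-top block of $U$; either operation preserves the initial segment.

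The hard part will be verifying the auxiliary structural invariant across the successive claims within a single stage, because each claim absorbs $[u+1, s]$ into $u$'s block of followers and thereby reshapes the family of remaining decision-maker blocks. I expect this to come down to a careful bookkeeping of how decision-makers are nested along the natural order on integers and how their blocks interleave in $<_L$, showing that after each claim the newly enlarged $u$-block remains a contiguous $<_L$-interval at the freshly shifted boundary while every other decision-maker's block retains its shape and position.
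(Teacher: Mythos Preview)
Your reduction of transitivity to the invariant (Inv) is correct, and (Inv) is indeed the heart of the matter: a 3-cycle with $\omega$-largest element $s$ is exactly a failure of $U_s \cap \{0,\ldots,s-1\}$ to be $<_L$-downward-closed. However, the auxiliary invariant you propose for the inner induction cannot hold as stated. If $u_1 < u_2$ are both current decision-makers with, say, $u_1 \in U$ and $u_2 \notin U$, then your invariant would force $[u_1, s] \subseteq U$ (as the ``$<_L$-top of $U$'') while $u_2 \in [u_1, s]$ and $u_2 \notin U$. More generally, the intervals $[u, s]$ for distinct decision-makers are nested, so they cannot all sit ``exactly at the boundary'' unless all decision-makers lie on the same side of $U$, which the construction does not guarantee. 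To make the inner induction go through you would need a different invariant, presumably one phrased in terms of the follower blocks $[u_j, u_{j+1})$ rather than the tails $[u_j, s]$, together with a careful specification of the order in which decision-makers are processed at a stage (the construction is ambiguous on this point, and the order affects which intervals get added or removed).

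The paper sidesteps all of this. It argues the 3-cycle case directly: from $u \in U_s$, $v \notin U_s$, and $u \notin U_v$ (the latter coming from $v <_L u$ when $u <_\omega v$), it locates the last stage $t+1 \leq s$ at which $u$ was added to $U$ via an interval $[w,t]$ with $w \leq u$. Since $t \geq v$, also $v \in [w+1,t]$, and after that stage both $u$ and $v$ follow $w$ (or something below $w$), so their $U$-membership can only change simultaneously. As $u$ stays in $U$ through stage $s$, so does $v$, contradicting $v \notin U_s$. This single ``lockstep'' observation replaces your entire block-tracking apparatus.
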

\begin{proof}
As $L$ is a tournament, it suffices to check there is no 3-cycle.
By symmetry, we check only the case where $u <_L s <_L v <_L u$ forms a 3-cycle
with $s$ the maximal element in $<_\omega$ order. By construction, this means that
$u \in U_s$, $v \not \in U_s$.
If $u <_\omega v$, then $u \not \in U_v$ and so there exists a decision-maker $w \leq_\omega u$ and an even number $i \leq w$
such that $\Rcal_i$ requires attention for $w$ at a stage $t \geq v$. Case (a) of the construction
applies and the interval $[w+1, t]$ is included $U$ at least until stage $s$. 
As $v \in [w+1, t]$, $v \in U_s$ contradicting our hypothesis.
Case $u >_\omega v$ is symmetric.
\end{proof}

\begin{claim}
$U$ is $\Delta^0_2$.
\end{claim}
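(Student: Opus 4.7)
The plan is to exhibit $\{U_s\}_s$ as a computable approximation and verify that $\lim_s U_s(n)$ exists for every $n$, from which $U \in \Delta^0_2$ follows immediately.

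First I would observe that, once the index $d$ is fixed by Kleene's fixpoint theorem, the indices $m_i$ encoding the requirements $\Rcal_i$ are obtained uniformly in $i$, and the auxiliary function $g : \omega^3 \to \{0,1\}$ is total computable. The stage-by-stage procedure can therefore be carried out effectively, giving a uniformly computable sequence of finite sets $\{U_s\}_s$ that matches the construction.

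Second, I would bound how many times $U_s(n)$ can change. By inspection, $U_{s+1}$ differs from $U_s$ only through an application of case (a) or case (b), each of which modifies $U$ on an interval $[u, s]$, where $u$ is a decision-maker being claimed at stage $s+1$. For such a modification to affect $n$ one needs $u \leq n \leq s$, so only decision-makers in $\{0, 1, \dots, n\}$ are ever relevant. Since the construction restricts claims to decision-makers that have not yet been claimed, each of these at most $n+1$ decision-makers is claimed at most once and thus triggers at most one modification. Hence $U_s(n)$ flips value at most $n+1$ times, and $\lim_s U_s(n)$ exists.

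Putting these together, $\{U_s\}_s$ is a computable approximation whose pointwise limit is $U$, so $U \leq_T \emptyset'$, i.e., $U$ is $\Delta^0_2$. The step I expect to be the main obstacle is the second one: one has to resist worrying that the fluctuations of $g(m_i, s, u)$ in $s$ could generate unboundedly many actions on a given decision-maker. The resolution is precisely the \emph{one-shot} nature of claims built into the construction: once some $\Rcal_i$ claims $u$, later values of $g(m_i, s, u)$ cannot retrigger action on $u$, which is exactly what is needed to pin down the $n+1$ bound above and hence to conclude $\Delta^0_2$-ness of $U$.
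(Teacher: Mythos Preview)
Your argument rests on a misreading of the construction. The clause ``which has not been claimed at stage $s+1$'' refers only to the iteration \emph{within} stage $s+1$: it prevents a decision-maker from being processed twice during that single stage, not from being claimed again at later stages. Decision-makers are in fact claimed over and over; the paper's own verification of the requirements speaks of ``the least requirement claiming $u$ infinitely many times'' and of $\Rcal_i$ claiming a decision-maker ``cofinitely many times''. So your ``one-shot'' reading and the resulting $n+1$ bound on flips are both false. Concretely, for a fixed decision-maker $u$, the value $g(m_i, s, u)$ may oscillate in $s$ before its limit settles, and at each stage the least $i$ with $g(m_i, s, u) = 0$ may change parity, causing $u$ (and hence $n \geq u$) to flip membership in $U$ repeatedly.

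The paper's actual argument is different and genuinely uses the limit hypothesis on $g$. One takes the \emph{least} $u$ that flips infinitely often and argues it must be a decision-maker (a follower's flips are driven by the decision-maker it follows). Then one takes the least $i$ such that $\Rcal_i$ claims $u$ infinitely often; since $\lim_s g(m_j, s, u)$ exists for every $j \leq i$, one deduces that eventually $g(m_j, s, u) = 1$ for all $j < i$ and $g(m_i, s, u) = 0$, so $\Rcal_i$ claims $u$ at \emph{every} sufficiently large stage. From that point on the action has fixed parity, forcing $u$ to stay in $U$ (if $i$ is even) or in $\overline{U}$ (if $i$ is odd), contradicting infinite oscillation. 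Your proof needs to replace the one-shot bound with this limit-based stabilization argument.
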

\begin{proof}
Suppose for the sake of absurd that there exists a least element $u$
entering $U$ and leaving it infinitely many times.
Such $u$ must be a decision-maker, otherwise it would not be the least one.
Let $\Rcal_i$ be the least requirement claiming $u$ infinitely many times.
As $\lim_s g(m_i, s, u)$ exists, it will claim $u$ cofinitely many times and therefore
$u$ will be in $U$ or in $\overline{U}$ cofinitely many times. Contradiction.
\end{proof}

It immediately follows that $L$ is stable.

\begin{claim}
Every requirement $\Rcal_i$ is satisfied.
\end{claim}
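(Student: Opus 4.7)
I would prove this claim by induction on $i$, leveraging the $\Delta^0_2$-ness of $U$ established in the previous claim. Assume every $\Rcal_j$ with $j < i$ is satisfied and treat the case $i = 2e$; the case $i = 2e+1$ is entirely symmetric after swapping $U$ with $\overline{U}$ and clauses (a) and (b). If $Z_e$ is finite then $\Rcal_{2e}$ holds trivially, so suppose $Z_e$ is infinite and, for contradiction, that $\Phi^{\emptyset'}_d(a) \neq 1$ for every $a \in Z_e$; equivalently, $Z_e \cap U = \emptyset$. The plan is to derive a contradiction by showing that $U$ is cofinite.

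From the inductive hypothesis and the failure of $\Rcal_{2e}$, $\lim_t \lim_s g(m_j, s, t) = 1$ for every $j < 2e$ while $\lim_t \lim_s g(m_{2e}, s, t) = 0$. Choose a threshold $t_0 > 2e$ so that for every $u \geq t_0$, $\lim_s g(m_j, s, u) = 1$ for all $j < 2e$ and $\lim_s g(m_{2e}, s, u) = 0$. Then for any decision-maker $v \geq t_0$, at cofinitely many stages $\Rcal_{2e}$ is the least requirement of index $< v$ that requires attention for $v$, so $\Rcal_{2e}$ claims $v$ at cofinitely many stages. Using the $\Delta^0_2$-ness of $U$, each decision-maker is acted on only finitely often, so the finitely many decision-makers below $t_0$ produce only finitely many action stages in total; fix $s^* \geq t_0$ past which no requirement acts on any decision-maker $< t_0$.

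The central step is to show that every $v \in D_\infty \cap [t_0, \infty)$ --- where $D_\infty$ denotes the integers that remain decision-makers forever --- lies in $U$ in the limit, and that the last action on such a $v$ is necessarily clause (a) of $\Rcal_{2e}$. I argue by taking a smallest hypothetical counterexample $v_0$: by minimality every $v' \in D_\infty \cap [t_0, v_0)$ eventually lies in $U$, and each of the finitely many integers $< v_0$ is acted on only finitely often (whether it stays in $D_\infty$ or eventually becomes a follower). Past all those actions and past $s^*$, no clause (b) action on any $v' < v_0$ can remove $v_0$ from $U$; yet the cofinite claims of $\Rcal_{2e}$ on $v_0$ via clause (a) force $v_0$ into $U$ permanently, contradicting $v_0 \notin U$. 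Consequently the last action on each such $v$ places the whole interval $[v, \sigma(v)]$ into $U$ for good, where $\sigma(v)$ is its stage.

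Finally, fix any $m > s^*$. The last action affecting $m$ acts on some decision-maker $v \leq m$ at a stage $\geq m > s^*$; by construction of $s^*$ this $v$ must be $\geq t_0$, and the previous paragraph shows the action is clause (a), which places $m$ in $U$ as its final state. Therefore $\omega \setminus U \subseteq [0, s^*]$ is finite, $U$ is cofinite, and $Z_e$ being infinite must meet $U$ --- contradicting $Z_e \cap U = \emptyset$. The real delicacy is the inner induction on the smallest large counterexample $v_0$: once it goes through, the construction is effectively monotone past $s^*$ and cofiniteness of $U$ drops out.
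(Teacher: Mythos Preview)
Your proposal is correct and follows essentially the same route as the paper: induct on $i$, obtain a threshold $t_0$ past which $\Rcal_i$ is the least requirement requiring attention for every decision-maker $\geq t_0$, deduce that cofinitely many elements end up in $U$ (respectively $\overline U$), and contradict the assumption that $Z_e$ is infinite yet disjoint from $U$. The paper's version is terser---it invokes the follower structure directly (``every element follows the least decision-maker below itself'') rather than arguing via a minimal counterexample $v_0$ and the previously established $\Delta^0_2$-ness of $U$---but the substance is the same.
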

\begin{proof}
By induction over $R_i$ in priority order. 
Suppose that $R_j$ is satisfied for all $j < i$, but $\Rcal_i$ is not satisfied.
Then there exists a threshold $t_0 \geq i$ such that $\lim_s g(m_j, s, t) = 1$
for all $j < i$ and $\lim_s g(m_i, s, t) = 0$ whenever $t \geq t_0$.

Then for every decision-maker $u \geq t_0$, $\Rcal_i$ will claim $u$ cofinitely many times,
and therefore $u$ will be in $U$ if $i$ is even and in $\overline{U}$ if $i$ is odd.
As every element follows the least decision-maker below itself, every $v$ above the least decision-maker
greater than $t_0$ will be in $U$ if $i$ is even and in $\overline{U}$ if $i$ is odd.
So if $Z_e$ is infinite, there will be such a $v \in Z_e$ satisfying $\Rcal_i$.
Contradiction.
\end{proof}
\end{proof}

\section{Degrees bounding the Erd\"os Moser theorem}

Another approach to the strength analysis of Ramsey's theorem for pairs
consists in seeing a coloring $f : [\omega]^2 \to 2$ as an infinite tournament
$T$ such that $T(x, y)$ holds for $x < y$ if and only if $f(x, y) = 1$.
The Erd\H{o}s Moser theorem states the existence of an infinite transitive subtournament,
that is, an infinite subset on which the tournament behaves like a linear order.
Therefore the Erd\H{o}s Moser theorem can be seen as a principle reducing instances of $\rt^2_2$
into instances of~$\ads$.

\begin{definition}[Erd\H{o}s Moser theorem]
A tournament $T$ on a domain $D \subseteq \N$ is an irreflexive binary relation on~$D$ such that for all $x,y \in D$ with $x \not= y$, exactly one of $T(x,y)$ or $T(y,x)$ holds. A tournament $T$ is \emph{transitive} if the corresponding relation~$T$ is transitive in the usual sense. A tournament $T$ is \emph{stable} if $(\forall x \in D)[(\forall^{\infty} s) T(x,s) \vee (\forall^{\infty} s) T(s, x)]$.
$\emo$ is the statement ``Every infinite tournament $T$ has an infinite transitive subtournament.''
$\semo$ is the restriction of $\emo$ to stable tournaments.
\end{definition}

Bovykin and Weiermann proved in \cite{bovykin2005strength} that $\emo + \ads$
is equivalent to $\rt^2_2$ over $\rca$, equivalence still holding between their stable versions.
Lerman \& al. \cite{lerman2013separating} proved over $\rca + \bst$ that $\emo$ implies $\opt$
and constructed an $\omega$-model of $\emo$ not model of $\srt^2_2$.
Kreuzter \& al. proved in \cite{kreuzer2012primitive} that $\semo$ implies
$\bst$ over $\rca$. Bienvenu \& al. proved in \cite{bievenu2014rwkl} that $\rca \vdash \semo \imp \rwkl$,
hence there exists an $\omega$-model of $\rrt^2_2$ not model of $\semo$.
Wang constructed in~\cite{wang2014ctfm} an $\omega$-model of $\emo + \coh$ not model of $\sts(2)$.
Finally, the author proved in~\cite{patey2014somewhere} that $\rca \vdash \emo \imp [\sts(2) \vee \coh]$.

The following notion of \emph{minimal interval}
plays a fundamental role in the analysis of $\emo$.
See~\cite{lerman2013separating} for a background analysis of $\emo$.

\begin{definition}[Minimal interval]
Let $T$ be an infinite tournament and $a, b \in T$
be such that $T(a,b)$ holds. The \emph{interval} $(a,b)$ is the
set of all $x \in T$ such that $T(a,x)$ and $T(x,b)$ hold.
Let $F \subseteq T$ be a finite transitive subtournament of $T$.
For $a, b \in F$ such that $T(a,b)$ holds, we say that $(a,b)$
is a \emph{minimal interval of $F$} if there is no $c \in F \cap (a,b)$,
i.e. no $c \in F$ such that $T(a,c)$ and $T(c,b)$ both hold.
\end{definition}

We provide in the next subsections two different proofs
of the existence of a low${}_2$ degree bounding $\emo$.
More precisely, we construct a low${}_2$ set $G$
which is, up to finite changes, transitive for every infinite computable tournament.

The author proved in~\cite{patey2014somewhere} that $[\sts(2) \vee \coh] \leq_c \emo$.
Therefore every low${}_2$ degree bounding $\emo$ bounds also $\coh$.
The proof does not seem adaptable to prove that $\coh$ is a consequence of $\emo$
even in $\omega$-models. However we can prove a weaker statement:

\begin{lemma}
For every set $X$, there exists an infinite $X$-computable tournament $T$
such that for every infinite $T$-transitive subtournament $U$,
$U \subseteq^{*} X$ or $U \subseteq^{*} \overline{X}$.
\end{lemma}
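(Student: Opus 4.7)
The plan is to define the tournament directly from the characteristic function of $X$ so that non-transitive triples encode alternation of $X$-membership. Concretely, I would let $T$ be the binary relation on $\omega$ given by setting, for $x <_\omega y$, $T(x,y)$ to hold if and only if $(x \in X) \leftrightarrow (y \in X)$, and extending by $T(y,x) := \neg T(x,y)$. Since both ``$x \in X$'' and ``$y \in X$'' are $X$-computable predicates, $T$ is an $X$-computable, irreflexive, total tournament on $\omega$.

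Next I would perform the routine case analysis on a triple $u <_\omega v <_\omega w$ by going through the $2^3$ possibilities for $(x_u, x_v, x_w)$, where $x_z = 1$ if $z \in X$ and $0$ otherwise. A short check shows that in the six monotone cases -- every pattern except $010$ and $101$ -- the edges prescribed by $T$ form a linear order on $\{u,v,w\}$, while the two alternating patterns both yield the $3$-cycle $u \to w \to v \to u$. Hence a triple is $T$-transitive iff its $X$-pattern is not strictly alternating.

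Now let $U$ be any infinite $T$-transitive subtournament. Every three-element subset of $U$ must be $T$-transitive, so the characteristic function of $X$ restricted to $U$, read in $<_\omega$-order, contains neither $010$ nor $101$ as a (not necessarily consecutive) three-term subpattern. Any infinite binary sequence avoiding these two subpatterns is eventually constant: once its value switches once, any later switch would, together with an element from before the first switch, produce a forbidden alternating triple. Hence one of $U \cap X$ and $U \cap \overline{X}$ is finite, giving $U \subseteq^{*} X$ or $U \subseteq^{*} \overline{X}$, as desired.

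The only mildly delicate step is the three-element case analysis, which underlies the whole argument; the rest is elementary combinatorics on binary sequences and no priority or forcing construction is required. The conceptual heart of the argument is the choice of $T$: having $T(x,y)$ reflect equality of $X$-status forces the two non-transitive tournaments on three points to correspond exactly to alternating $X$-patterns, which in turn forces almost-homogeneity of any infinite transitive subtournament.
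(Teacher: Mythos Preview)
Your proof is correct and follows essentially the same approach as the paper: you define the identical tournament $T$ (with $T(a,b)$ holding for $a<b$ iff $a$ and $b$ lie on the same side of $X$) and then argue that an infinite transitive subtournament cannot meet both $X$ and $\overline{X}$ infinitely often. The only cosmetic difference is that you exhibit a $3$-cycle from an alternating triple ($010$ or $101$), whereas the paper exhibits a $4$-cycle from a pattern $1010$; your route is arguably slightly cleaner since absence of $3$-cycles is exactly transitivity.
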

\begin{proof}
Fix a set $X$. We define a tournament $T$ as follows:
For each $a < b$, set $T(a, b)$ to hold iff $a \in X$ and $b \in X$ or $a \not \in X$ and $b \not \in X$.
Suppose for the sake of absurd that $U$ is an infinite transitive subtournament of $T$
which intersects infinitely often $X$ and $\overline{X}$.
Take any $a, c \in U \cap X$ and $b, d \in U \cap \overline{X}$ such that $a < b < c < d$.
Then $T(a, c)$, $T(c, b)$, $T(b, d)$ and $T(d, a)$ hold contradicting transitivity of $U$.
\end{proof}

Using previous lemma, the constructed set $G$ must be cohesive
and therefore provides another proof of the existence of a low${}_2$ cohesive set.
Finally, we can deduce a statement slightly weaker than Theorem~\ref{thm:stsn-bounding}
simply by the existence of a low${}_2$ degree bounding $\emo$.

\begin{lemma}
There exists a set $C$ such that
there is no low${}_2$ over $C$ degree $\textbf{d} \gg_{\sads} C$.
\end{lemma}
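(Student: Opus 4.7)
The plan is to combine the existence of a low${}_2$ degree bounding $\emo$ (from the main theorem of this section) with Mileti's theorem that $\srt^2_2$ admits no low${}_2$ bound, via the standard sequential reduction of $\srt^2_2$ to the composition of $\semo$ and $\sads$.

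First I fix a set $C$ of low${}_2$ degree such that $C \gg_{\emo} \emptyset$. I claim this $C$ witnesses the lemma. Suppose for contradiction that some degree $\mathbf{d}$ is low${}_2$ over $C$ and satisfies $\mathbf{d} \gg_{\sads} C$. Given any computable stable coloring $f : [\omega]^2 \to 2$, define the tournament $T$ on $\omega$ by $T(x, y) \Leftrightarrow f(x, y) = 0$ for $x < y$; stability of $f$ yields stability of $T$, so $T$ is a computable instance of $\emo$. By the choice of $C$, there is a $C$-computable infinite transitive subtournament $U$ of $T$, and the ordering $L$ induced on $U$ by $T$ is a $C$-computable linear order. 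Stability of $T$ forces $L$ to have order type $\omega + \omega^*$, so $L$ is a $C$-computable instance of $\sads$. The assumption $\mathbf{d} \gg_{\sads} C$ then yields a $\mathbf{d}$-computable ascending or descending sequence $H$ in $L$, and $H$ is easily checked to be homogeneous for $f$.

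Therefore $\mathbf{d} \oplus C$ computes a homogeneous set for every computable stable coloring, so $\mathbf{d} \oplus C \gg_{\srt^2_2} \emptyset$. Moreover $(\mathbf{d} \oplus C)'' \leq_T C'' \leq_T \emptyset''$, the first inequality from $\mathbf{d}$ being low${}_2$ over $C$ and the second from low${}_2$-ness of $C$, so $\mathbf{d} \oplus C$ is a low${}_2$ degree bounding $\srt^2_2$, contradicting Mileti's theorem cited in the introduction.

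The only delicate point is verifying that the tournament-to-linear-order passage sends stable colorings to genuine $\sads$ instances, i.e.\ preserves the $\omega + \omega^*$ structure on the transitive subtournament; this is a short, standard verification once one observes that $T$ inherits stability from $f$ and that transitivity of $U$ means $T$ restricted to $U$ really is a linear order.
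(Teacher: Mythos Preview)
Your argument is essentially the paper's: take a low${}_2$ $C$ bounding $\emo$, turn Mileti's stable $\srt^2_2$ instance into a tournament, pull out a $C$-computable transitive subtournament, and observe that any ascending or descending sequence in the induced order is $f$-homogeneous. The only difference is cosmetic---you package the conclusion as ``$\mathbf{d}\oplus C$ would be a low${}_2$ bound for $\srt^2_2$'', whereas the paper argues directly that the given low${}_2$-over-$C$ set $X$ fails to solve the resulting $C$-computable instance.

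One point deserves more care than you give it. You assert that stability of $T$ forces $L$ to have order type $\omega+\omega^{*}$, but this is not true: the transitive subtournament $U$ could lie almost entirely in one limit-color class, in which case $L$ has order type $\omega$, $\omega^{*}$, $\omega+k$, or $k+\omega^{*}$, and is then not literally an $\sads$ instance under the paper's definition. The paper glosses over the same point by calling $U$ a ``stable linear order'' without further comment. The gap is easily closed (for example, by computably padding $L$ with a copy of $\omega$ below and $\omega^{*}$ above, and noting that any infinite ascending or descending sequence in the padded order that is not eventually in the padding yields one in $L$, while one eventually in the padding is already $C$-computable and can be replaced by a sequence in $L$ using $C$), but it is not the ``short, standard verification'' you suggest.
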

\begin{proof}
Fix a low${}_2$ set $C \gg_{\emo} \emptyset$
and a set $X$ low${}_2$ over $C$. By low${}_2$-ness of $C$, $X$ is low${}_2$.
Consider the stable coloring $f : [\omega]^2 \to 2$ constructed by Mileti in~\cite{miletipartition},
such that $X$ computes no infinite $f$-homogeneous set.
We can see $f$ as a stable tournament $T$ such that for each $x < y$,
$T(x,y)$ holds iff $f(x,y) = 1$. As $C \gg_{\emo} \emptyset$, there exists an infinite $C$-computable
transitive subtournament $U$ of $T$. $U$ is a stable linear order
such that every infinite ascending or descending sequence is $f$-homogeneous.
Therefore $X$ computes no infinite ascending or descending sequence in $U$.
\end{proof}

The following question remains open:

\begin{question}
Does $\emo$ admit a universal instance ?
\end{question}

\subsection{A low${}_2$ degree bounding $\emo$ using first jump control}

The following theorem uses the proof techniques introduced in~\cite{cholak2001strength} for producing
low${}_2$ sets by controlling the first jump. It is done in the same spirit
as Theorem~3.6 in~\cite{cholak2001strength}.

\begin{theorem}\label{thm:low2-degree-bounding-em-first-jump}
For every set $P \gg \emptyset'$, there exists a set $G \gg_{\emo} \emptyset$
such that $G' \leq_T P$.
\end{theorem}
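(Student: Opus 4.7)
The plan is to construct, uniformly in $e$, an infinite $T_e$-transitive set $G_e$ with $G_e'$ uniformly $P$-computable, and then set $G := \bigoplus_e G_e$; this $G$ bounds $\emo$ by design and inherits $G' \leq_T P$ from the uniformity. Fix a computable tournament $T = T_e$. The forcing is a variant of Mathias forcing whose conditions are pairs $(F, X)$, where $F$ is a finite $T$-transitive subtournament, $X$ is an infinite $\emptyset'$-computable reservoir (referenced by a $\emptyset'$-index) with $F < X$, and $X$ lies either in a single minimal interval of $F$ or in one of the two extremal regions ($T$-above or $T$-below $F$). This placement ensures $F \cup H$ is $T$-transitive whenever $H \subseteq X$, so genericity in this forcing produces an infinite $T$-transitive set.

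The extension lemma --- every condition admits an extension enlarging $F$ --- is proved by taking $x = \min X$ and partitioning $X \setminus \{x\}$ into $X_0 = \{y : T(x,y)\}$ and $X_1 = \{y : T(y,x)\}$. At least one is infinite, and the corresponding $(F \cup \{x\}, X_b)$ is again a valid condition since $X_b$ sits in a minimal interval of $F \cup \{x\}$. On the level of $\emptyset'$-indices, both ``index $j$ codes a valid extension of $c$'' and ``some extension of $c$ forces $\Phi_i^{G_e}(i)\downarrow$'' are $\Sigma^{0,\emptyset'}_1$ predicates.

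First-jump control then follows the Cholak--Jockusch--Slaman blueprint of Theorem~3.6 in~\cite{cholak2001strength}. We build a $P$-computable descending sequence of condition codes $c_0 \geq c_1 \geq \cdots$ by stages: at stage $i$ we ask whether some extension of $c_i$ forces $\Phi_i^{G_e}(i)\downarrow$, and take a witnessing extension if so, otherwise extend arbitrarily (which then forces $\Phi_i^{G_e}(i)\uparrow$, since no valid extension witnesses convergence). Because $P \gg \emptyset'$ uniformly separates disjoint $\Sigma^{0,\emptyset'}_1$ sets, it performs all these choices consistently and thus computes the entire sequence $(c_s)$, hence $G_e' = \{i : \Phi_i^{G_e}(i)\downarrow\}$, uniformly in $e$. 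The main obstacle is the index-level bookkeeping: one must arrange that the ``valid next conditions'' at every stage form a $\emptyset'$-c.e.\ set of indices whose reservoirs have $\Sigma^{0,\emptyset'}_1$-verifiable infinity, so that each $P$-computable choice is well-defined and keeps an infinite reservoir available for the next extension and for the next diagonalization step.
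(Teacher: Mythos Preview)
Your proposal has the right skeleton (EM-style Mathias conditions, first-jump control following Cholak--Jockusch--Slaman), but there is a genuine gap at the jump-deciding step that you flag in your last paragraph without resolving.

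The problem is the claim that ``some extension of $c$ forces $\Phi_i^{G_e}(i)\downarrow$'' is $\Sigma^{0,\emptyset'}_1$. A valid extension $(F \cup F_1, Y)$ requires $Y$ to be infinite, a $\Pi^{0,\emptyset'}_1$ condition, so the predicate as stated is not $\Sigma^{0,\emptyset'}_1$. If instead you ask the genuinely $\Sigma_1$ question ``is there a finite $T$-transitive $F_1 \subseteq X$ with $\Phi_i^{F\cup F_1}(i)\downarrow$?'', then a positive answer does \emph{not} give you an extension: the elements of $F_1$ may lie on both $T$-sides of every infinite tail of $X$, so no infinite $Y$ makes $(F\cup F_1,Y)$ an EM condition. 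The paper calls this the distinction between the workable forcing relation $\Vdash$ and the true relation $\Vdash^*$, and notes explicitly that $c \not\Vdash \Phi^G_e(e)\uparrow$ does not yield an extension forcing convergence.

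The paper's fix is a combinatorial dichotomy you are missing. One asks whether there is a finite $E \subseteq X$ such that \emph{every} $2$-partition of $E$ has a part containing a transitive $F_1$ forcing convergence. If yes, one uses $P$ to find the specific $2$-partition of $E$ induced by an infinite tail $Y \subseteq X$ (elements beating $Y$ versus beaten by $Y$); the hypothesis then guarantees an $F_1$ inside one part, and that $F_1$ is extendible by construction. If no, compactness gives a nonempty $\Pi^{0,C}_1$ class of $2$-partitions of $X$ neither of whose parts contains such an $F_1$; the low basis theorem produces one with both parts low, and $P$ selects an infinite part, which then forces divergence. This is why the paper keeps reservoirs \emph{low over a fixed low set $C$} rather than merely $\emptyset'$-computable: the low basis step must return a reservoir of the same kind. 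Your $\emptyset'$-computable reservoirs would not survive this step.

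A secondary difference: the paper builds a single set $G$ simultaneously transitive (up to finite error) for all computable tournaments, using conditions $(\sigma, F, X)$ where $\sigma$ records finitely many tournament-specific thresholds. Your one-tournament-at-a-time approach with $G = \bigoplus_e G_e$ is also viable, but only once the gap above is closed.
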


Before proving Theorem~\ref{thm:low2-degree-bounding-em-first-jump}, we introduce
the notion of \emph{Erd\H{o}s Moser condition}.

\begin{definition}
An \emph{Erd\H{o}s Moser condition} (EM condition) for an infinite tournament~$T$
is a Mathias condition $(F, X)$ where
\begin{itemize}
	\item[(a)] $F \cup \{x\}$ is $T$-transitive for each $x \in X$
	\item[(b)] $X$ is included in a minimal $T$-interval of $F$.
\end{itemize}
\end{definition}

Extension is usual Mathias extension.
EM conditions have good properties for tournaments as state following lemmas.
Given a tournament $T$ and two sets $E$ and $F$,
we denote by $E \to_T F$ the formula $(\forall x \in E)(\forall y \in F) T(x,y) \mbox{ holds}$.

\begin{lemma}\label{lem:emo-cond-beats}
Fix an EM condition $(F, X)$ for a tournament $T$.
For every $x \in F$, $\{x\} \to_T X$ or $X \to_T \{x\}$.
\end{lemma}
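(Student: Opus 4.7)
The plan is to use the minimal interval structure of the condition, together with transitivity of $F$, to force a dichotomy on each element of $F$. Let $(a,b)$ be the minimal $T$-interval of $F$ in which $X$ is included, as furnished by clause (b) of the definition. Note first that $F$ itself is $T$-transitive: picking any $y \in X$, clause (a) yields that $F \cup \{y\}$ is $T$-transitive, so a fortiori $F$ is.

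Fix now $x \in F$. The cases $x = a$ and $x = b$ are immediate: every $y \in X$ lies in $(a,b)$, so $T(a,y)$ and $T(y,b)$ hold, giving $\{a\} \to_T X$ and $X \to_T \{b\}$ respectively. Suppose then $x \notin \{a,b\}$. Since $T$ is a tournament, either $T(x,a)$ or $T(a,x)$ holds, and either $T(x,b)$ or $T(b,x)$ holds. The goal is to rule out the "interior" case and handle the two remaining ones by transitivity.

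First I would show that $T(a,x)$ and $T(x,b)$ cannot both hold: this would place $x$ in $F \cap (a,b)$, contradicting the minimality of the interval $(a,b)$ of $F$. Together with $T(a,b)$ and transitivity of $F$ on $\{a,x,b\}$, this leaves exactly two possibilities. If $T(x,a)$ holds, then for any $y \in X$ we have $T(x,a)$ and $T(a,y)$; applying transitivity of $F \cup \{y\}$ (clause (a)) to the triple $x,a,y$ yields $T(x,y)$, so $\{x\} \to_T X$. Symmetrically, if $T(b,x)$ holds, then for $y \in X$ we have $T(y,b)$ and $T(b,x)$, and transitivity of $F \cup \{y\}$ gives $T(y,x)$, so $X \to_T \{x\}$.

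There is no real obstacle here; the only thing to be careful about is to apply transitivity in the larger set $F \cup \{y\}$ rather than in $F$ alone, since $y$ is the witness that forces the direction between $x$ and the elements of $X$. The use of the minimality of the interval $(a,b)$ is exactly what eliminates the "$x$ lies strictly between $a$ and $b$" case, which is the only configuration that would allow $x$ to be beaten by some elements of $X$ and to beat others.
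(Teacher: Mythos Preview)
Your proof is correct and follows essentially the same route as the paper's: use minimality of the interval $(a,b)$ to exclude the case $T(a,x)\wedge T(x,b)$, and then push through an endpoint via transitivity of $F\cup\{y\}$ to force the direction between $x$ and every $y\in X$. The paper's version is slightly terser and explicitly allows the endpoints to be $-\infty$ or $+\infty$ (covering the degenerate cases $|F|\le 1$), which you do not mention, but this is a minor boundary issue rather than a gap in the argument.
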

\begin{proof}
Fix an $x \in F$. Let $(u,v)$ be the minimal $T$-interval containing $X$,
where $u, v$ may be respectively $-\infty$ and $+\infty$.
By definition of interval, $\{u\} \to_T X \to_T \{v\}$.
By definition of minimal interval, $T(x,u)$ or $T(v,x)$ holds.
Suppose the former holds.
By transitivity of $F \cup \{y\}$ for every $y \in X$, $T(x,y)$ holds, therefore $\{x\} \to_T Y$.
In the latter case, by symmetry, $Y \to_T \{x\}$.
\end{proof}

\begin{lemma}\label{lem:emo-cond-valid}
Fix an EM condition $c = (F, X)$ for a tournament $T$, 
an infinite subset $Y \subseteq X$ and a finite $T$-transitive set $F_1 \subset X$ such that
$F_1 < Y$ and $[F_1 \to_T Y \vee Y \to_T F_1]$.
Then $d = (F \cup F_1, Y)$ is a valid extension of $c$.
\end{lemma}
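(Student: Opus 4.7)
The plan is to verify in turn the three requirements for $d = (F \cup F_1, Y)$ to be a valid EM condition extending $c$: that it is a Mathias condition, that it is a Mathias extension of $c$, and that it satisfies clauses (a) and (b) of the EM condition definition. The first two are routine: $F \cup F_1$ is finite, $Y$ is infinite, and from $F < X$, $F_1 \subset X$, and the hypothesis $F_1 < Y$ one reads off $F \cup F_1 < Y$; also $F \subseteq F \cup F_1$, $Y \subseteq X$, and $(F \cup F_1) \setminus F = F_1 \subset X$.

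The substantive work is clause (a): that $(F \cup F_1) \cup \{y\}$ is $T$-transitive for each $y \in Y$. I will argue by case analysis on a hypothetical triangle $T(a,b), T(b,c)$ in this set, distinguishing the positions of the three vertices among $F$ and the ``new'' side $F_1 \cup \{y\} \subseteq X$. Triangles with at least two vertices in $F$ are handled directly by the original clause (a) of $c$: transitivity of $F$ if all three are there, and transitivity of $F \cup \{z\}$ for the unique new vertex $z \in X$ otherwise. Triangles with exactly one vertex $x \in F$ are handled by Lemma~\ref{lem:emo-cond-beats}: the sign of $T$ between $x$ and any element of $X$ is forced to be uniform, which either yields $T(a,c)$ directly (when $x$ is an endpoint of the triangle) or forces two incompatible orientations of $x$ against $X$ (when $x$ is the middle vertex $b$, with the other two in $X$), making that subcase vacuous. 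Finally, triangles with no vertex in $F$ (all three in $F_1 \cup \{y\}$) are handled by the hypothesis $F_1 \to_T Y \vee Y \to_T F_1$ together with transitivity of $F_1$: each sub-subcase either yields the missing edge immediately or is ruled out because the assumed orientations of edges touching $y$ contradict the chosen disjunct.

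For clause (b), note that $X$, and hence $Y$, lies in a minimal $T$-interval $(u,v)$ of $F$, where we allow $u = -\infty$ or $v = +\infty$. Since $F_1$ is a finite transitive tournament it is linearly $T$-ordered; using the hypothesis, I take $z^*$ to be the $T$-maximum of $F_1$ if $F_1 \to_T Y$, and the $T$-minimum of $F_1$ if $Y \to_T F_1$. In the first case the claim is that $(z^*, v)$ is a minimal $T$-interval of $F \cup F_1$ containing $Y$: containment holds because $F_1 \to_T Y$ gives $T(z^*, y)$ for every $y \in Y$ and $Y \subseteq (u,v)$ gives $T(y, v)$; the $F$-side minimality is inherited from $(u,v)$; and every $z \in F_1 \setminus \{z^*\}$ satisfies $T(z, z^*)$, so $z \notin (z^*, v)$. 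The second case is symmetric, with interval $(u, z^*)$.

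The main obstacle is the case analysis in clause (a), where Lemma~\ref{lem:emo-cond-beats} and the disjunctive hypothesis on $F_1$ versus $Y$ must each be deployed in the right subcases; everything else is bookkeeping.
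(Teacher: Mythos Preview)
Your proof is correct and follows essentially the same approach as the paper's: both dispose of the Mathias-extension conditions trivially, establish clause~(a) by a triangle case analysis that invokes Lemma~\ref{lem:emo-cond-beats} for the vertices lying in $F$ and the hypothesis $F_1 \to_T Y \vee Y \to_T F_1$ for the remaining ones, and establish clause~(b) by adjoining the $T$-extremal element of $F_1$ to the original minimal interval $(u,v)$ of $F$. The only cosmetic difference is that the paper organizes the clause-(a) case split via the natural order $u < v < w$ together with $F < F_1 < \{x\}$, whereas you split according to how many of the three vertices lie in $F$.
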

\begin{proof}
Properties of a Mathias condition for $d$ are immediate.
We prove property~(a). Fix an $x \in Y$.
To prove that $F \cup F_1 \cup \{x\}$ is $T$-transitive,
it suffices to check that there exists no 3-cycle in $F \cup F_1 \cup \{x\}$.
Fix three elements $u < v < w \in F \cup F_1 \cup \{x\}$.
\begin{itemize}
	\item Case 1: $\{u, v, w\} \cap F \neq \emptyset$. Then $u \in F$ as $F < F_1 < \{x\}$
	and $u < v < w$. If $v \in F$ then using the fact that $F_1 \cup \{x\} \subset X$
	and property (a) of condition $c$, $\{u, v, w\}$ is $T$-transitive.
	If $v \not \in F$, then by Lemma~\ref{lem:emo-cond-beats},
	$\{u\} \to_T X (\supseteq F \cup \{x\})$ or $X \to_T \{u\}$
	hence $\{u\} \to_T \{v,w\}$ or $\{v,w\} \to_T \{u\}$ so $\{u, v, w\}$ is $T$-transitive.
	
	\item Case 2: $\{u, v, w\} \cap F = \emptyset$. Then at least $u, v \in F_1$ because $F_1 < \{x\}$.
	If $w \in F_1$, then $\{u,v,w\}$ is $T$-transitive by $T$-transitivity of $F_1$.
	Otherwise, as $F_1 \to_T Y$ or $Y \to_T F_1$, $\{u,v\} \to_T \{w\}$
	or $\{w\} \to_T \{u,v\}$ and $\{u,v,w\}$ is $T$-transitive.
\end{itemize}
We now prove property (b).
Let $(u,v)$ be the minimal $T$-interval of $F$ in which $X$ (hence $Y$) is included by property (b) of
condition $c$. $u$ and $v$ may be respectively $-\infty$ and $+\infty$.
By assumption, either $F_1 \to_T Y$ or $Y \to_T F_1$. As $F_1$ is a finite $T$-transitive set,
it has a minimal and a maximal element, say $x$ and $y$. 
If $F_1 \to_T Y$ then $Y$ is included in the $T$-interval $(y, v)$.
Symmetrically, if $Y \to_T F_1$ then $Y$ is included in the $T$-interval $(u, x)$.
To prove minimality for the first case, assume that some $w$ is in the interval $(y, v)$.
Then $w \not \in F$ by minimality of the interval $(u,v)$ w.r.t. $F$,
and $w \not \in F_1$ by maximality of $y$.
Minimality for the second case holds by symmetry. 
\end{proof}

\begin{proof}[Proof of Theorem~\ref{thm:low2-degree-bounding-em-first-jump}]
Let $C$ be a low set such that there exists a uniformly $C$-computable enumeration
$\vec{T}$ of infinite tournaments containing every computable tournament. Note that $P \gg C'$.
Our forcing conditions are tuples $(\sigma, F, X)$ where $\sigma \in \omega^{<\omega}$ and the following holds:
\begin{itemize}
	\item[(a)] $(F,X)$ forms a Mathias condition and $X$ is a set low over $C$.
	\item[(b)] $(F \setminus [0, \sigma(\nu)], X)$ is an EM condition for $T_\nu$ for each $\nu < |\sigma|$.
\end{itemize}
A condition $(\tilde{\sigma}, \tilde{F}, \tilde{X})$ \emph{extends} a condition $(\sigma, F, X)$
if $\sigma \preceq \tilde{\sigma}$ and $(\tilde{F},\tilde{X})$ Mathias extends $(F,X)$.
A set $G$ \emph{satisfies} the condition $(\sigma, F, X)$ if
$G \setminus [0, \sigma(\nu)]$ is $T_\nu$-transitive for each $\nu < |\sigma|$ and $G$ satisfies the Mathias condition $(F,X)$.
An \emph{index} of a condition $(\sigma, F, X)$ is a code of the tuple $\tuple{\sigma, F, e}$ where $e$ is a lowness index of~$X$.

The first lemma simply states that we can ensure 
that $G$ will be infinite and eventually transitive for each tournament in $\vec{T}$.

\begin{lemma}\label{lem:infinite-em-first-jump-control}
For every condition $c = (\sigma, F, X)$ and every $i, j \in \omega$, one can
$P$-compute an extension
$(\tilde{\sigma}, \tilde{F}, \tilde{X})$ such that $|\tilde{\sigma}| \geq i$ and $|\tilde{F}| \geq j$
uniformly from $i$, $j$ and an index of~$c$. 
\end{lemma}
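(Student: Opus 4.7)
The plan is to handle the two growth requirements separately: first grow $\sigma$ to length at least $i$ trivially, then enlarge $F$ by $j$ new elements using $P$ to manage the partition imposed by the tournaments $T_0, \ldots, T_{|\sigma|-1}$.

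Extending $\sigma$ requires no oracle beyond $C$. Given a condition $(\sigma, F, X)$, I would form $\sigma^+$ by appending $\max F + 1$ at position $|\sigma|$. Then $(\sigma^+, F, X)$ is again a condition: clause (a) and the clauses of (b) for $\nu < |\sigma|$ are unchanged, while for $\nu = |\sigma|$ we have $F \setminus [0, \sigma^+(|\sigma|)] = \emptyset$, so $(\emptyset, X)$ is vacuously an EM condition for $T_{|\sigma|}$ (with the convention that the minimal interval of the empty set is all of $\omega$). Iterating this at most $i$ times produces a condition with $|\sigma| \geq i$.

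To enlarge $F$, I would iterate the following subroutine $j$ times. With $n = |\sigma|$ and $x := \min X$, define for each $s \in \{1,2\}^n$ the set
\[
Y_s := \{ y \in X : y > x \text{ and for each } \nu < n,\ s_\nu = 1 \Leftrightarrow T_\nu(x,y) \text{ holds} \}.
\]
The family $(Y_s)_s$ partitions $X \setminus [0,x]$, so by pigeonhole at least one $Y_s$ is infinite. Each $Y_s$ is uniformly $(X \oplus C)$-computable, hence low over $C$ with a lowness index effectively computable from $s$ and a lowness index of $X$. The statement ``$Y_s$ is infinite'' is $\Pi^0_2$ in $X \oplus C$, i.e. $\Pi^0_1((X \oplus C)') = \Pi^0_1(\emptyset')$ since $X$ is low over $C$ and $C$ is low. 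Thus the set of valid $s$ is a nonempty $\Pi^0_1(\emptyset')$ subset of $\{1,2\}^n$, which by $0^\omega$-padding embeds as a nonempty $\Pi^0_1(\emptyset')$ class in $2^\omega$, from which the PA-over-$\emptyset'$ property of $P$ extracts such an $s$ uniformly. Set $\tilde F = F \cup \{x\}$ and $\tilde X = Y_s$: for each $\nu < n$, either $x \leq \sigma(\nu)$ (and the $\nu$-th EM clause is unchanged) or $x > \sigma(\nu)$, in which case Lemma~\ref{lem:emo-cond-valid} applied with $F_1 = \{x\}$ and the direction dictated by $s_\nu$ shows the EM condition for $T_\nu$ persists.

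The central obstacle is that $P \gg \emptyset'$ is strictly weaker than $\emptyset''$, yet $P$ must identify an $s$ with $Y_s$ infinite. This is handled by recasting the problem as finding a member of a finite nonempty $\Pi^0_1(\emptyset')$ set, which any PA-over-$\emptyset'$ degree accomplishes via the padding trick. Everything else is routine uniformity: lowness indices for $Y_s$, the $\Pi^0_1(\emptyset')$ class, and the final lowness index for $\tilde X$ are all uniformly computable from $i$, $j$, and the input index of $c$.
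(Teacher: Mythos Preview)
Your proposal is correct and follows essentially the same approach as the paper: extend $\sigma$ trivially so that the new EM clauses are vacuous, then add $x=\min X$ to $F$ by partitioning $X\setminus\{x\}$ according to the $T_\nu$-relations with $x$ and using $P$ to select an infinite cell, finally invoking Lemma~\ref{lem:emo-cond-valid}. The only cosmetic differences are that the paper pads $\sigma$ with $\min X$ rather than $\max F+1$ (both make $F\setminus[0,\tilde\sigma(\nu)]=\emptyset$ for the new coordinates), and that you spell out explicitly how $P\gg\emptyset'$ locates an infinite cell via a $\Pi^0_1(\emptyset')$ class, which the paper leaves as ``one can find uniformly in $P$''.
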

\begin{proof}
Let $x$ be the first element of $X$. As $X$ is low over $C$, $x$ can be found $C'$-computably
from a lowness index of $X$.
The condition $(\tilde{\sigma}, F, X)$ is a valid extension of $c$ 
where $\tilde{\sigma} = \sigma^\frown x \dots x$ so that $|\tilde{\sigma}| \geq i$.
It suffices to prove that we can $C'$-compute an extension $(\tilde{\sigma}, \tilde{F}, \tilde{X})$
with $|\tilde{F}| > |F|$ and iterate the process.
Define the computable coloring $g : X \to 2^{|\tilde{\sigma}|}$
by $g(s) = \rho$ where $\rho \in 2^{|\tilde{\sigma}|}$ such that $\rho(\nu) = 1$ iff $T_\nu(x, s)$ holds.
One can find uniformly in $P$ a $\rho \in 2^{|\tilde{\sigma}|}$ such that the following $C$-computable set is infinite:
$$
Y = \{ s \in X \setminus \{x\} : g(s) = \rho\} 
$$
By Lemma~\ref{lem:emo-cond-valid}, $((F \cup \{x\}) \setminus [0,\tilde{\sigma}(\nu)], Y)$
is a valid EM extension for $T_\nu$. As $Y$ is low over $C$,
$(\tilde{\sigma}, F \cup \{x\}, Y)$ is a valid extension for~$c$.
\end{proof}

It remains to be able to decide $e \in (G \oplus C)'$ uniformly in $e$. 
We first need to define a forcing relation.

\begin{definition}Fix a condition $c = (\sigma, F, X)$ and two integers $e$ and $x$.
\begin{itemize}
	\item[1.] $c \Vdash \Phi^{G \oplus C}_e(x) \uparrow$
	if $\Phi^{(F \cup F_1) \oplus C}_e(x) \uparrow$ for all finite subsets $F_1 \subseteq X$
	such that $F_1$ is $T_\nu$-transitive simultaneously for each $\nu < |\sigma|$.
	\item[2.] $c \Vdash \Phi^{G \oplus C}_e(x) \downarrow$
	if $\Phi^{F \oplus C}_e(x) \downarrow$.
\end{itemize}
\end{definition}

Note that the way we defined our forcing relation $c \Vdash \Psi^{G \oplus C}_e(x) \uparrow$ differs
slightly from the ``true'' forcing notion $\Vdash^{*}$ inherited by the notion of satisfaction of $G$.
The true forcing definition of this statement is the following:

$c \Vdash^{*} \Phi^{G \oplus C}_e(x) \uparrow$ if $\Phi^{(F \cup F_1) \oplus C}_e(x) \uparrow$ 
for all finite \emph{extensible} subsets $F_1 \subseteq X$
such that $F_1$ is $T_\nu$-transitive simultaneously for each $\nu < |\sigma|$, i.e.\ 
for all finite subsets $F_1 \subseteq X$ such that there exists an extension
$d = (\tilde{\sigma}, F \cup F_1, \tilde{X})$.

However $c \Vdash^{*} \Phi^{G \oplus C}_e(x) \uparrow$ is not a $\Pi^0_1$ statement
whereas $c \Vdash \Phi^{G \oplus C}_e(x) \uparrow$ is.
In particular the fact that $c \not \Vdash \Phi^{G \oplus C}_e(x) \uparrow$
does not mean that $c$ has an extension forcing its negation.
This subtlety is particularly important in Lemma~\ref{lem:force-jump-em-first-jump-control}.
The following lemma gives a sufficient constraint, namely being included in a part of a particular partition,
on finite transitive sets to ensure that they are \emph{extensible}.

\begin{lemma}\label{lem:part1-em-first-jump-control}
Let $c = (\sigma, F, X)$ be a condition and $E \subseteq X$ be a finite set. 
There exists a $2^{|\sigma|}$ partition $(E_\rho : \rho \in 2^{|\sigma|})$ of $E$ and an infinite set $Y \subseteq X$
low over $C$ such that $E < Y$ and for all $\rho \in 2^{|\sigma|}$ and $\nu < |\sigma|$,
if $\rho(\nu) = 0$ then $E_\rho \to_{T_\nu} Y$ and if $\rho(\nu) = 1$ then $Y \to_{T_\nu} E_\rho$.

Moreover this partition and a lowness index of $Y$ can be uniformly $P$-computed
from an index of $c$ and the set $E$.
\end{lemma}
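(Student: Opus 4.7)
The plan is to reduce the statement, via pigeonhole on a finite-colored $C$-computable coloring, to a single choice of color that can be made using the PA-over-$\emptyset'$ strength of $P$. Enumerate $E = \set{e_0 < \dots < e_{m-1}}$ and, for $y \in X$ with $y > \max E$, define the ``profile'' $\pi(y) \in (2^{|\sigma|})^{E}$ by $\pi(y)(x)(\nu) = 1$ iff $T_\nu(y, x)$ holds. Because $\vec{T}$ is uniformly $C$-computable, $\pi$ is a $C$-computable coloring with at most $k := 2^{m \cdot |\sigma|}$ colors. If $\pi$ is constant with value $\chi$ on some infinite $Y \subseteq X \cap (\max E, \infty)$, then the partition $E_\rho := \set{x \in E : \chi(x) = \rho}$ witnesses the lemma: for $x \in E_\rho$ and $y \in Y$, the equation $\pi(y)(x) = \rho$ means that $T_\nu(y, x)$ holds exactly when $\rho(\nu) = 1$, which gives $E_\rho \to_{T_\nu} Y$ when $\rho(\nu) = 0$ and $Y \to_{T_\nu} E_\rho$ when $\rho(\nu) = 1$.

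The candidates for $Y$ are the sets $Y_\chi := X \cap \pi^{-1}(\chi)$. Since $\pi^{-1}(\chi)$ is $C$-computable and $X$ is low over $C$, we have $Y_\chi \leq_T X \oplus C$, hence $(Y_\chi \oplus C)' \leq_T (X \oplus C)' \leq_T C'$; so each $Y_\chi$ is low over $C$ with a lowness index uniformly computable from $\chi$ and the lowness index of $X$. The remaining task is therefore to have $P$ output a color $\chi$ for which $Y_\chi$ is infinite.

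To do this, I use that $P \gg \emptyset'$ (equivalently, $P \gg C'$). Because $Y_\chi$ is low over $C$ and $C$ is low, the $\Sigma^0_1(Y_\chi \oplus C)$ predicate $(\exists n \geq N)[n \in Y_\chi]$ is uniformly $\emptyset'$-decidable in $N$ and $\chi$; hence ``$Y_\chi$ is infinite'' is $\Pi^0_1(\emptyset')$, and by pigeonhole the set $V := \set{\chi : Y_\chi \text{ is infinite}}$ is a non-empty $\Pi^0_1(\emptyset')$ subset of the finite collection of colors. Encoding $V$ as a non-empty $\Pi^0_1(\emptyset')$ class in $2^\omega$ — for instance, the class of $\chi \in 2^k$ with exactly one $1$-bit, at a position $c$ satisfying $c \in V$ — I apply the PA-over-$\emptyset'$ basis property of $P$ to compute a specific $\chi \in V$. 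The main subtlety is this complexity computation: one has to be careful that lowness of $X$ over $C$ really does push ``$Y_\chi$ is infinite'' down to $\Pi^0_1(\emptyset')$, since at $\Pi^0_2(\emptyset')$ the basis property of $P$ would no longer be strong enough. Once $\chi$ is chosen, set $Y := Y_\chi$ and output its uniformly computable lowness index together with the partition $E_\rho := \chi^{-1}(\rho)$ to complete the proof.
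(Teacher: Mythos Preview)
Your proof is correct and follows essentially the same approach as the paper: both define a $C$-computable coloring of $X$ above $E$ by the ``profile'' of each $y$ with respect to the tournaments $T_\nu$ and the elements of $E$, then use the PA-over-$\emptyset'$ strength of $P$ to select a color class that is infinite. The only cosmetic difference is that the paper parameterizes the colors directly as $2^{|\sigma|}$-partitions of $E$ (its $g(x) \in P_E$) while you parameterize them as functions $\chi : E \to 2^{|\sigma|}$ and recover the partition as $E_\rho = \chi^{-1}(\rho)$; your complexity discussion justifying why $P$ can pick an infinite $Y_\chi$ is more explicit than the paper's, but the underlying argument is the same.
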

\begin{proof}
Given a set $E$, define $P_E$ to be the finite set of ordered $2^{|\sigma|}$-partitions of $E$,
that is,
$$
P_E = \{ \tuple{E_\rho : \rho \in 2^{|\sigma|}} : \bigcup_{\rho \in 2^{|\sigma|}} E_\rho = E 
\mbox{ and } \rho \neq \xi \imp E_\rho \cap E_\xi = \emptyset\}
$$
Define the $C$-computable coloring $g : X \to P_E$ by $g(x) = (E^x_\rho : \rho \in 2^{|\sigma|})$ where
$E^x_\rho = \{ a \in E : (\forall \nu < |\sigma|)[T_\nu(a, x) \mbox{ holds iff } \rho(\nu) = 0]\}$.
On can find uniformly in $P$ a partition $(E_\rho : \rho \in 2^{|\sigma|})$
such that the following $C$-computable set is infinite:
$$
Y = \{ x \in X \setminus E : g(x) = (E_\rho : \rho \in 2^{|\sigma|}) \}
$$
By definition of $g$, for all $\rho \in 2^{|\sigma|}$ and $\nu < |\sigma|$,
if $\rho(\nu) = 0$ then $E_\rho \to_{T_\nu} Y$ and if $\rho(\nu) = 1$ then $Y \to_{T_\nu} E_\rho$.
\end{proof}

We are now ready to prove the key lemma of this forcing, stating
that we can $P$-decide whether or not $e \in G'$ for any $e \in \omega$.

\begin{lemma}\label{lem:force-jump-em-first-jump-control}
For every condition $(\sigma, F, X)$ and every $e \in \omega$, there exists
an extension $d = (\tilde{\sigma}, \tilde{F}, \tilde{X})$ such that one of the following holds:
\begin{itemize}
	\item[1.] $d \Vdash \Phi^{G \oplus C}_e(e) \downarrow$
	\item[2.] $d \Vdash \Phi^{G \oplus C}_e(e) \uparrow$
\end{itemize}
This extension can be $P$-computed uniformly 
from an index of $c$ and $e$.
Moreover there is a $C'$-computable procedure to decide which case holds from an index of $d$.
\end{lemma}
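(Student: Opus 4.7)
The plan is a forcing argument in which $P$ both decides the main dichotomy and $P$-computes the new condition. Using $P \geq_T \emptyset' \geq_T C'$, I would first decide the $\Pi^{0,C}_1$ question of whether $c \Vdash \Phi^{G \oplus C}_e(e) \uparrow$; unpacked, this asks that for every finite $F_1 \subseteq X$ simultaneously $T_\nu$-transitive for each $\nu < |\sigma|$, $\Phi^{(F \cup F_1) \oplus C}_e(e)$ diverges. Since $X$ is $C$-computable from its lowness index and transitivity is computable, the inner predicate is $C$-recursive and the full question is $\Pi^{0,C}_1$, hence $C'$-decidable. If the answer is yes, set $d = c$.

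If the answer is no, I would use $P$ to produce a witness $F_1$ and apply Lemma~\ref{lem:part1-em-first-jump-control} with $E = F_1$ to $P$-compute a partition $(E_\rho : \rho \in 2^{|\sigma|})$ of $F_1$ together with a lowness index of an infinite set $Y \subseteq X$ low over $C$, with each piece $E_\rho$ uniformly related to $Y$ across every $T_\nu$. The candidate extension is $d = (\sigma, F \cup F_1, Y)$. Verifying its validity reduces, after handling the interaction of $F \setminus [0, \sigma(\nu)]$ with the new elements via Lemma~\ref{lem:emo-cond-beats} (each such element uniformly relates to $X \supseteq Y$ in $T_\nu$), to checking $T_\nu$-transitivity of $F_1 \cup \{y\}$ for each $y \in Y$ and $\nu < |\sigma|$. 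By the uniformity of the partition this property is independent of the specific $y \in Y$, so it either holds throughout $Y$ or fails throughout.

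The main obstacle is the uniform-failure case: $F_1$'s ambient $T_\nu$-order may conflict with the side assignment dictated by the partition, leaving $F_1 \cup \{y\}$ non-transitive for every $y \in Y$. To address this I would strengthen the $P$-search of the previous step so that the witness $F_1$ additionally satisfies the compatibility condition that the candidate extension is valid; this is a $\Sigma^{0,C}_1$ strengthening and remains $C'$-decidable from the data produced by Lemma~\ref{lem:part1-em-first-jump-control}. If a compatible witness exists, construct $d$ as above, and $d \Vdash \Phi^{G \oplus C}_e(e) \downarrow$ holds by definition since $\Phi^{(F \cup F_1) \oplus C}_e(e) \downarrow$. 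If no compatible witness exists, pass instead to $c' = (\sigma, F, Y)$: any transitive $F_1' \subseteq Y$ making $\Phi$ converge would, by reapplying Lemma~\ref{lem:part1-em-first-jump-control} inside $c'$, yield a compatible witness and contradict the failure assumption, so $c' \Vdash \Phi^{G \oplus C}_e(e) \uparrow$ and $d = c'$ suffices. The resulting $d$ comes with a $P$-computable index, and the dichotomy between the two outcomes is $C'$-readable from that index, giving the uniformity and complexity bounds claimed in the statement.
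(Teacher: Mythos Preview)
Your argument has a genuine gap in the fallback case. You correctly identify the central difficulty: a witness $F_1 \subseteq X$ that is simultaneously $T_\nu$-transitive and makes $\Phi_e^{(F\cup F_1)\oplus C}(e)$ converge need not be \emph{extensible}, because after applying Lemma~\ref{lem:part1-em-first-jump-control} the elements of $F_1$ may land in different parts $E_\rho$, so Lemma~\ref{lem:emo-cond-valid} does not apply to $F_1$ as a whole. But your proposed repair is circular. If no ``compatible'' $F_1$ exists, you pass to $c' = (\sigma, F, Y)$ and claim that any convergent witness $F_1' \subseteq Y$ would, after another application of Lemma~\ref{lem:part1-em-first-jump-control}, be compatible. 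There is no reason for this: the new partition of $F_1'$ and the new set $Y' \subseteq Y$ are determined by how the elements of $F_1'$ relate to $Y'$, and the same obstruction can recur. You are implicitly iterating a shrinking process with no termination bound, which destroys the uniform $P$-computability you need.

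The paper resolves this with a different dichotomy. Rather than asking whether some transitive $F_1 \subseteq X$ forces convergence, it asks whether there is a finite $E \subseteq X$ such that \emph{every} $2^{|\sigma|}$-partition of $E$ has a part containing such an $F_1$. If yes, then the particular partition produced by Lemma~\ref{lem:part1-em-first-jump-control} is one of these, so some part $E_\rho$ already contains a transitive convergent $F_1$; since $F_1 \subseteq E_\rho$, Lemma~\ref{lem:emo-cond-valid} applies directly and $(\sigma, F\cup F_1, Y)$ is a valid extension. If no, then by compactness the $\Pi^{0,C}_1$ class of $2^{|\sigma|}$-partitions of $X$ whose every part forces divergence is nonempty; the low basis theorem yields such a partition low over $C$, and $P$ selects an infinite part $X_i$, giving the divergent extension $(\sigma, F, X_i)$. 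The quantifier over all partitions of $E$ is precisely what guarantees compatibility in one shot and, dually, what makes the failure case a closed condition amenable to compactness.
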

\begin{proof}
Let $k = |\sigma|$.
Using a $C'$-computable procedure, we can decide from an index of $c$ and $e$ whether
there exists a finite set $E \subset X$ such that for every $2^k$-partition
$(E_i : i < 2^k)$ of $E$, there exists an $i < 2^k$ and a subset $F_1 \subseteq E_i$
$T_\nu$-transitive simultaneously for each $\nu < k$ and
satisfying $\Phi_e^{(F \cup F_1) \oplus C}(e) \downarrow$.
\begin{itemize}
	\item[1.] If such a set $E$ exists, it can be $C'$-computably found. 
	By Lemma~\ref{lem:part1-em-first-jump-control}, one can $P$-computably find
	a $2^k$-partition $(E_\rho : \rho \in 2^k)$ of $E$ and a set $Y \subseteq X$ low over $C$
	such that for all $\rho \in 2^k$ and $\nu < k$,
	if $\rho(\nu) = 0$ then $E_\rho \to_{T_\nu} Y$ and if $\rho(\nu) = 1$ then $Y \to_{T_\nu} E_\rho$.
	We can $C'$-computably find a $\rho \in 2^k$ and a set $F_1 \subseteq E_\rho$
	which is $T_\nu$-transitive simultaneously for each $\nu < k$ and
	satisfying $\Phi_e^{(F \cup F_1) \oplus C}(e) \downarrow$. 
	By Lemma~\ref{lem:emo-cond-valid}, $(F \setminus [0,\sigma(\nu)]) \cup F_1, Y)$
	is a valid EM extension of $(F \setminus [0, \sigma(\nu)], X)$ for $T_\nu$ for each $\nu < k$.
	As $Y$ is low over $C$, $(\sigma, F \cup F_1, Y)$ 
	is a valid extension of~$c$ forcing $\Phi_e^{G \oplus C}(e) \downarrow$.

	\item[2.] If no such set exists, then by compactness, the $\Pi^{0,C}_1$ class
	of all $2^k$-partitions $(X_i : i < 2^k)$ of $X$ such that for every $i < 2^k$ 
	and every finite set $F_1 \subseteq X_i$ which is $T_\nu$-transitive simultaneously for each $\nu < k$,
	$\Phi_e^{(F \cup F_1) \oplus C}(e) \uparrow$ is non-empty. In other words,
	the $\Pi^{0,C}_1$ class of all $2^k$-partitions $(X_i : i < 2^k)$ of~$X$
	such that for every $i < 2^k$, $(\sigma, F, X_i) \Vdash \Phi_e^{G \oplus C}(e) \uparrow$
	is non-empty. By the relativized low basis theorem, there exists
	a $2^k$-partition $(X_i : i < 2^k)$ of $X$ low over $C$. Furthermore, a lowness index for this partition
	can be uniformly $C'$-computably found. Using~$P$, one can find
	an $i < 2^k$ such that $X_i$ is infinite.
	$(\sigma, F, X_i)$ is a valid extension of~$c$ forcing $\Phi_e^{G \oplus C}(e) \uparrow$.
\end{itemize}
\end{proof}

Using Lemma~\ref{lem:infinite-em-first-jump-control} 
and Lemma~\ref{lem:force-jump-em-first-jump-control}, one can $P$-compute an infinite
decreasing sequence of conditions $c_0 = (\epsilon, \emptyset, \omega) \geq c_1 \geq \dots$
such that for each $s > 0$
\begin{itemize}
	\item[1.] $|\sigma_s| \geq s$, $|F_s| \geq s$
	\item[2.]	$c_s \Vdash \Phi_s^{G \oplus C}(s) \downarrow$ or $c_s \Vdash \Phi_s^{G \oplus C}(s) \uparrow$
\end{itemize}
where $c_s = (\sigma_s, F_s, X_s)$.
The resulting set $G = \bigcup_s F_s$ is $T_\nu$-transitive up to finite changes for each $\nu \in \omega$
and $G' \leq_T P$.
\end{proof}

\subsection{A low${}_2$ degree bounding $\emo$ using second jump control}
We now use the second proof technique used in~\cite{cholak2001strength} for producing a low${}_2$ set.
It consists of directly controlling the second jump of the produced set.

\begin{theorem}\label{thm:low2-degree-bounding-em-second-jump}
There exists a low${}_2$ degree bounding $\emo$.
\end{theorem}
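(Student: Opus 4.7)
The plan is to adapt the second-jump-control technique of Cholak, Jockusch and Slaman~\cite{cholak2001strength} to the EM forcing of the previous subsection, producing a set $G$ eventually transitive for every infinite computable tournament with $G'' \leq_T \emptyset''$, hence low${}_2$. Fix a uniformly computable enumeration $(T_\nu)_{\nu \in \omega}$ of infinite tournaments containing every infinite computable one, and use a variant of the EM forcing of Theorem~\ref{thm:low2-degree-bounding-em-first-jump}: conditions are tuples $c = (\sigma, F, X)$ with $(F, X)$ a Mathias condition satisfying the EM properties (a) and (b) of the previous subsection relative to $T_\nu$ for each $\nu < |\sigma|$. In contrast with the first-jump construction, the reservoir $X$ is no longer required to be low, but it is constrained (via the standard trick of coding reservoirs by a parameter in a suitable $\Pi^0_1$-class) so that membership in $X$ remains uniformly computable from the condition and the $\Sigma^{0,G}_2$ forcing relation defined below is itself $\Sigma^0_2$.

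For a $\Sigma^{0,G}_2$ formula $\psi(G) \equiv \exists y\,\forall z\,\theta(G,y,z)$, set $c \Vdash \psi$ if there is a finite $F_1 \subseteq X$ which is $T_\nu$-transitive simultaneously for each $\nu < |\sigma|$ and an integer $y$ such that $\forall z\,\theta(F \cup F_1, y, z)$ holds. The central lemma to prove is that for every such $\psi$, one can $\emptyset''$-compute an extension $d$ of $c$ such that either $d \Vdash \psi$ or no further extension of $d$ forces $\psi$: $\emptyset''$ first asks whether some extension of $c$ forces $\psi$, which is a $\Sigma^0_2$ question by the complexity claim on $\Vdash$; a positive answer lets $\emptyset''$ locate a witness via Lemma~\ref{lem:emo-cond-valid}, while a negative answer means $c$ itself witnesses the alternative. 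A parallel density lemma enlarges $|\sigma|$ and $|F|$: padding $\sigma$ with the least $x \in X$ and partitioning $X \setminus \{x\}$ by the $T_\nu(x, \cdot)$-types for $\nu < |\sigma|$, $\emptyset''$ locates an infinite class $Y$, and Lemma~\ref{lem:emo-cond-valid} yields a valid $\emptyset''$-selected extension.

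Iterating the two lemmas along an $\emptyset''$-computable schedule of requirements produces a descending sequence of conditions $c_0 \geq c_1 \geq \dots$ with $|\sigma_s|, |F_s| \geq s$ and such that $c_s$ decides the $s$-th $\Sigma^{0,G}_2$ formula. The fusion $G = \bigcup_s F_s$ is, up to finite changes, $T_\nu$-transitive for every $\nu \in \omega$, hence $G \gg_{\emo} \emptyset$; and since the $\Sigma^{0,G}_2$ theory of $G$ is $\emptyset''$-computable through the forcing relation, $G'' \leq_T \emptyset''$, so $G$ has low${}_2$ degree. The main obstacle will be organizing the reservoir structure so that the $\Sigma^0_2$ forcing relation $\Vdash$ is indeed $\Sigma^0_2$ uniformly in $c$, and proving the accompanying genericity lemma stating that $\psi(G)$ holds iff some $c_s \Vdash \psi$; the forward direction relies on property (b) of the EM conditions to guarantee that every finite $T_\nu$-transitive piece $F_1 \subseteq X_s$ appended to $F_s$ along the generic path yields a valid extension of $c_s$, allowing any $\Sigma^{0,G}_2$ witness actually realized by $G$ to be detected by the forcing at some stage.
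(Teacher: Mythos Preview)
Your proposal has a genuine gap at its core, and it is precisely the obstacle the paper's proof is designed to overcome.

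First, the forcing relation you define is not a forcing relation for $\Sigma^{0,G}_2$ formulas. Writing $\forall z\,\theta(F\cup F_1,y,z)$ with $F\cup F_1$ substituted for~$G$ does not force the $\Pi^{0,G}_1$ subformula: a $\Pi^{0,G}_1$ fact is not preserved upward under adding elements to the oracle, so the truth of $\forall z\,\theta(F\cup F_1,y,z)$ says nothing about $\forall z\,\theta(G,y,z)$ for the eventual infinite~$G$. At best this defines forcing of $\Sigma^{0,G}_1$ statements, not $\Sigma^{0,G}_2$.

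Second, and more seriously, even after repairing the forcing relation your density and genericity lemmas fail for EM forcing. The paper flags exactly this point after defining the first-jump forcing relation: the relation $c\Vdash\Phi_e^{G}(x)\uparrow$ quantifies over all simultaneously $T_\nu$-transitive $F_1\subseteq X$, but not every such~$F_1$ is \emph{extensible} to a valid condition, so $c\not\Vdash\Phi_e^{G}(x)\uparrow$ does \emph{not} imply that some extension forces convergence. The reason is that EM forcing is disjunctive: to extend $(\sigma,F,X)$ by any $F_1\subseteq X$ one must pass to a subreservoir $Y\subseteq X$ with $F_1\to_{T_\nu}Y$ or $Y\to_{T_\nu}F_1$ for each $\nu$ (Lemma~\ref{lem:emo-cond-valid}), and which cells of the resulting $2^{|\sigma|}$-partition of~$X$ are infinite is not under your control. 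Consequently your claim that ``every finite $T_\nu$-transitive piece $F_1\subseteq X_s$ appended to $F_s$ along the generic path yields a valid extension of~$c_s$'' is false, and with it the forward direction of your genericity lemma. Knowing that no extension of~$d$ forces~$\psi$ therefore does not decide $\neg\psi(G)$.

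The paper's proof supplies the missing idea: the Cholak--Jockusch--Slaman large/small dichotomy, adapted to the EM partition structure (Definition~\ref{def:em-smallness}). A condition is $\vec{e}$-small when $X$ admits a finite partition into pieces each of which (as a precondition) forces $\Phi_e^{G\oplus C}(y)\uparrow$ for some $e\in\vec{e}$ and some~$y$; it is $\vec{e}$-large otherwise. Smallness is $\Sigma^{0,C}_2$ uniformly in an index for~$c$ (Lemma~\ref{lem:decide-smallness-em-second-jump}); from an $\vec{e}$-large condition one can force $\Phi_e^{G\oplus C}(y)\downarrow$ for any $e\in\vec{e}$ and any~$y$ while remaining $\vec{e}$-large (Lemma~\ref{lem:em-large-to-instance}), and from a condition that is $\vec{e}$-large but $(\vec{e}\cup\{u\})$-small one extracts an $\vec{e}$-large extension forcing $\Phi_u^{G\oplus C}(y)\uparrow$ for some~$y$ (Lemma~\ref{lem:large-small-force-em-second-jump}). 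The partition built into the definition of smallness is exactly what absorbs the disjunctive branching that defeats your direct approach; maintaining a growing set~$\vec{e}_s$ of indices for which the current condition stays large then makes ``$\Phi_e^{G\oplus C}$ is total'' equivalent to ``$e\in\bigcup_s\vec{e}_s$'', which is $C''$-decidable.
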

\begin{proof}
Similar to Theorem~\ref{thm:low2-degree-bounding-em-first-jump}, 
we fix a low set $C$ such that there exists a uniformly $C$-computable enumeration
$\vec{T}$ of infinite tournaments containing every computable tournament. In particular $P \gg C'$.

Our forcing conditions are the same as in Theorem~\ref{thm:low2-degree-bounding-em-first-jump}.
We can release the constraints of infinity and lowness over $C$ for $X$
in a condition $(\sigma, F, X)$. This gives the notion of \emph{precondition}.
The forcing relations extend naturally to preconditions.

\begin{definition}\label{def:em-smallness}
Fix a finite set of Turing indexes $\vec{e}$.
A condition $(\sigma, F,X)$ is \emph{$\vec{e}$-small} if there exists a number~$x$
and a sequence $(\sigma_i, F_i, X_i : i < n)$ such that for each $i < n$
\begin{itemize}
	\item[(i)] $(\sigma_i, F_i, X_i)$ is a precondition extending $c$
	\item[(ii)] $(X_i : i < n)$ is a partition of $X \cap (x,+\infty)$
	\item[(iii)] $max(X_i) < x \mbox{ or } (\sigma_i, F \cup F_i, X_i) \Vdash (\exists e \in \vec{e})(\exists y < x)\Phi_e^{G \oplus C}(y) \uparrow$
\end{itemize}
A condition is \emph{$\vec{e}$-large} if it is not $\vec{e}$-small.
\end{definition}

A condition $(\tilde{\sigma}, \tilde{F}, \tilde{X})$ is a \emph{finite extension}
of $(\sigma, F, X)$ if $\tilde{X} =^{*} X$. Finite extensions
do not play the same fundamental role as in the original forcing in~\cite{cholak2001strength}
as adding elements to the set $F$ may require to remove infinitely many elements
of the promise set~$X$ to obtain a valid extension.
We nevertheless prove the following traditional lemma.

\begin{lemma}\label{lem:em-finite-extension-second-jump}
Fix an $\vec{e}$-large condition $c = (\sigma, F, X)$.
\begin{itemize}
	\item[1.] If $\vec{e'} \subseteq \vec{e}$ then $c$ is $\vec{e'}$-large.
	\item[2.] If $d$ is a finite extension of $c$ then $d$ is $\vec{e}$-large.
\end{itemize}
\end{lemma}
\begin{proof}
Clause 1 is trivial as $\vec{e}$ appears only in a universal quantification in the definition
of $\vec{e}$-largeness. We prove clause 2.
Let $d = (\tilde{\sigma}, \tilde{F}, \tilde{X})$ be an $\vec{e}$-small finite extension of $c$.
We will prove that $c$ is $\vec{e}$-small.
Let $x \in \omega$ and $(\sigma_i, F_i, X_i : i < n)$ witness $\vec{e}$-smallness of $d$.
Let $y = max(x, X \setminus \tilde{X})$. 
For each $i < n$, set $\tilde{F}_i = (\tilde{F} \setminus F) \cup F_i$ and $\tilde{X}_i = X_i \cap (y, +\infty)$.
Then $y$ and $(\sigma_i, \tilde{F}_i, \tilde{X}_i : i < n)$ witness $\vec{e}$-smallness of $c$.
\end{proof}

\begin{lemma}\label{lem:decide-smallness-em-second-jump}
There exists a $C''$-effective procedure to decide, given an index of
a condition~$c$ and a finite set of Turing indexes $\vec{e}$,
whether $c$ is $\vec{e}$-large.
Furthermore, if $c$ is $\vec{e}$-small, there exists sets $(X_i : i < n)$ low over~$C$
witnessing this, and one may $C'$-compute a value of $n$, $x$, lowness indexes
for $(X_i : i < n)$ and the corresponding sequences $(\sigma_i, F_i, X_i : i < n)$
which witness that $c$ is $\vec{e}$-small.
\end{lemma}
\begin{proof}
Fix a condition~$c = (\sigma, F, X)$ 
The predicate ``$(\sigma, F, X)$ is $\vec{e}$-small'' can be expressed
as a $\Sigma^0_2$ statement
$$
(\exists z)(\exists Z)P(z, Z, F, X, \vec{\nu}, \vec{e})
$$
where $P$ is a $\Pi^{0,C}_1$ predicate. Here $z$ codes $n$ and $x$,
and $Z$ codes $(X_i : i < n)$. $(\exists Z)P(z, Z, F, X, \sigma, \vec{e})$
is a $\Pi^{0,C \oplus X}_1$ predicate by compactness. As $X$ is low over $C$ and $F$ and $\sigma$ are finite,
one can compute a $\Delta^{0, C}_2$ index for the same predicate $P$
with parameter $z$, an index of $c$ and $\vec{e}$,
from a lowness index for $X$, $F$ and $\sigma$.
Therefore there exists a $\Sigma^{0,C}_2$ statement with parameters
an index of $c$ and $\vec{e}$ which holds iff $c$ is $\vec{e}$-small.

If $c$ is $\vec{e}$-small, there exists sets $(X_i : i < n)$ low over $X$ (hence low over $C$) witnessing
it by the low basis theorem relativized to $C$. By the uniformity of the proof of the low basis theorem,
one can compute lowness indexes of $(X_i : i < n)$ uniformly from a lowness index of~$X$.
\end{proof}

As the extension produced in Lemma~\ref{lem:infinite-em-first-jump-control} is not a finite extension,
we need to refine it to ensure largeness preservation.

\begin{lemma}\label{lem:infinite-em-second-jump-control}
For every $\vec{e}$-large condition $c = (\sigma, F, X)$ and every $i, j \in \omega$, 
one can $P$-compute an $\vec{e}$-large extension
$(\tilde{\sigma}, \tilde{F}, \tilde{X})$ such that $\tilde{\sigma} \geq i$ and $|\tilde{F}| \geq j$
uniformly from an index of $c$, $i$, $j$ and $\vec{e}$. 
\end{lemma}
\begin{proof}
Let $x$ be the first element of $X$. As $X$ is low over $C$, $x$ can be found $C'$-computably
from a lowness index of $X$.
The condition $d = (\tilde{\sigma}, F, X)$ is a valid extension of $c$ 
where $\tilde{\sigma} = \sigma^\frown x \dots x$ so that $|\tilde{\sigma}| \geq i$.
As $d$ is a finite extension of $c$, it is $\vec{e}$-large by Lemma~\ref{lem:em-finite-extension-second-jump}.
It suffices to prove that we can $C'$-compute an $\vec{e}$-large extension $(\tilde{\sigma}, \tilde{F}, \tilde{X})$
with $|\tilde{F}| > |F|$ and iterate the process.
Define the $C$-computable coloring $g : X \to 2^{|\tilde{\sigma}|}$ as in Lemma~\ref{lem:infinite-em-first-jump-control}.
For each $\rho \in 2^{|\tilde{\sigma}|}$, define the following set:
$$
Y_\rho = \{ s \in X \setminus \{x\} : g(s) = \rho\} 
$$
There must be a $\rho \in 2^{|\tilde{\sigma}|}$ such that $Y_\rho$ is infinite 
and $(\tilde{\sigma}, F \cup \{x\}, Y_\rho)$ is $\vec{e}$-large,
otherwise the witnesses of $\vec{e}$-smallness for each $\rho \in 2^{|\tilde{\sigma}|}$ would witness $\vec{e}$-smallness of~$c$.
By Lemma~\ref{lem:decide-smallness-em-second-jump}, one can $C''$-find a $\rho \in 2^{|\tilde{\sigma}|}$ such that
$(\tilde{\sigma}, F \cup \{x\}, Y_\rho)$ is $\vec{e}$-large.
As seen in Lemma~\ref{lem:infinite-em-second-jump-control}, $(\tilde{\sigma}, F, \{x\}, Y_\rho)$ is a valid extension.
\end{proof}

The following lemma is a refinement of Lemma~\ref{lem:part1-em-first-jump-control}
controlling largeness preservation.

\begin{lemma}\label{lem:part1-em-second-jump-control}
Let $c = (\sigma, F, X)$ be an $\vec{e}$-large condition and $E \subseteq X$ be a finite set. 
There is a $2^{|\sigma|}$ partition $(E_\rho : \rho \in 2^{|\sigma|})$ of $E$ and an infinite set $Y \subseteq X$
low over $C$ such that $E < Y$ and
\begin{itemize}
	\item[1.] for all $\rho \in 2^{|\sigma|}$ and $\nu < |\sigma|$, if $\rho(\nu) = 0$ 
	then $E_\rho \to_{T_\nu} Y$ and if $\rho(\nu) = 1$ then $Y \to_{T_\nu} E_\rho$.
	\item[2.] $(\sigma, F \cup F_1, Y)$ is an $\vec{e}$-large condition extending $d$ for every $\rho \in 2^{|\sigma|}$
	and every finite set $F_1 \subseteq E_\rho$ which is $T_\nu$-transitive for each $\nu < |\sigma|$
\end{itemize}
Moreover this partition and a lowness index of $Y$ can be uniformly $C''$-computed
from an index of $c$ and the set $E$.
\end{lemma}
\begin{proof}
Given a set $E$, recall from Lemma~\ref{lem:part1-em-first-jump-control} 
that $P_E$ is the finite set or ordered $2^k$-partitions of $E$.
Define again the computable coloring $g : X \to P_E$ by $g(x) = \tuple{E^x_\rho : \rho \in 2^{|\sigma|}}$ where
$E^x_\rho = \{ a \in E : (\forall \nu < |\sigma|)[T_\nu(a, x) \mbox{ holds iff } \rho(\nu) = 0]\}$.
If for each partition $(E_\rho : \rho \in 2^{|\sigma|})$, there exists a $\rho \in 2^{|\sigma|}$
and a $F_1 \subseteq E_\rho$ which is $T_\nu$-transitive simultaneously for each $\nu < |\sigma|$
and such that $(\sigma, F \cup F_1, Y)$ is $\vec{e}$-small where
$$
Y = \{ x \in X \setminus E : g(x) = (E_\rho : \rho \in 2^{|\sigma|}) \}
$$
Then we could construct a witness of $\vec{e}$-smallness of $c$ using smallness witnesses
of $(\sigma, F \cup F_1, Y)$ for each partition $(E_\rho : \rho \in 2^{|\sigma|})$.
Therefore there must exist a partition $(E_\rho : \rho \in 2^{|\sigma|})$
such that $Y$ is infinite and $d = (\sigma, F \cup F_1, Y)$ is $\vec{e}$-large for every $\rho \in 2^{|\sigma|}$
and every $F_1 \subseteq E_\rho$ which is $T_\nu$-transitive for each $\nu < |\sigma|$.

By Lemma~\ref{lem:decide-smallness-em-second-jump}, such partition can be found $C''$-computably.
By definition of $g$, for all $\rho \in 2^{|\sigma|}$ and $\nu < k$,
if $\rho(\nu) = 0$ then $E_\rho \to_{T_\nu} Y$ and if $\rho(\nu) = 1$ then $Y \to_{T_\nu} E_\rho$.
Therefore, by Lemma~\ref{lem:emo-cond-valid}, $((F \setminus [0,\sigma(\nu)]) \cup F_1, Y)$
is a valid EM extension of $(F \setminus [0, \sigma(\nu)], X)$ for $T_\nu$ for each $\nu < |\sigma|$, 
so $d$ is a valid condition.
\end{proof}

\begin{lemma}\label{lem:em-large-to-instance}
Suppose that $c = (\sigma, F, X)$ is $\vec{e}$-large.
For every $y \in \omega$ and $e \in \vec{e}$, 
there exists an $\vec{e}$-large extension $d$
such that $d \Vdash \Phi_e^{G \oplus C}(y) \downarrow$.
Furthermore, an index for $d$ can be computed from an oracle for $C'$
from an index of $c$, $e$ and $y$.
\end{lemma}
\begin{proof}
Let $k = |\sigma|$.
As $c$ is $\vec{e}$-large, then by a compactness argument,
there exists a finite set $E \subset X$ such that for every $2^k$-partition
$(E_i : i < 2^k)$ of $E$, there exists an $i < k$
and a finite subset $F_1 \subseteq E_i$ which is $T_\nu$-transitive
simultaneously for each $\nu < k$, and $\Phi_e^{(F \cup F_1) \oplus C}(y) \downarrow$.
Moreover this set $E$ can be $C'$-computably found.
By Lemma~\ref{lem:part1-em-second-jump-control},
on can uniformly $C''$-find a partition $(E_\rho : \rho \in 2^k)$
of $E$ and a lowness index for an infinite set $Y \subseteq X$
low over~$C$ such that
\begin{itemize}
	\item[1.] for all $\rho \in 2^k$ and $\nu < k$, if $\rho(\nu) = 0$ 
	then $E_\rho \to_{T_\nu} Y$ and if $\rho(\nu) = 1$ then $Y \to_{T_\nu} E_\rho$.
	\item[2.] $(\sigma, F \cup F_1, Y)$ is an $\vec{e}$-large condition extending~$c$ for every $\rho \in 2^k$
	and every finite set finite set $F_1 \subseteq E_\rho$ which is $T_\nu$-transitive for each $\nu < k$
\end{itemize}
We can then produce by a $C'$-computable search
a $\rho \in 2^k$ and a finite set $F_1 \subseteq E_\rho$
which is $T_\nu$-transitive for each $\nu < k$ and such that
$\Phi_e^{(F \cup F_1) \oplus C}(y) \downarrow$.
By Lemma~\ref{lem:emo-cond-valid}, $((F \setminus [0,\sigma(\nu)]) \cup F_1, Y)$
is a valid EM extension of $(F \setminus [0,\sigma(\nu)], X)$ for $T_\nu$ for each $\nu < k$.
As $Y$ is low over~$C$, $(\sigma, F \cup F_1, Y)$ is a valid $\vec{e}$-large extension.
\end{proof}

\begin{lemma}\label{lem:large-small-force-em-second-jump}
Suppose that $c = (\sigma, F, X)$ is $\vec{e}$-large
and $(\vec{e} \cup \{u\})$-small. There exists a $\vec{e}$-large 
extension $d$ such that $d \Vdash \Phi_u^{G \oplus C}(y) \uparrow$ for some $y \in \omega$.
Furthermore one can find an index for $d$ by applying
a $C''$-computable function to an index of $c$, $\vec{e}$ and $u$.
\end{lemma}
\begin{proof}
By Lemma~\ref{lem:decide-smallness-em-second-jump}, we may choose the sets $(X_i : i < n)$
witnessing that $c$ is $(\vec{e} \cup \{u\})$-small
to be low over $C$. Fix the corresponding $x$ and $(\sigma_i, F_i : i < n)$.
Consider the $i$'s such that $(\sigma_i, F_i, X_i) \Vdash \Phi_u^{G \oplus C}(y) \uparrow$ for some $y < x$.
As $c$ is $\vec{e}$-large, there must be one such $i < n$ such that
$(\sigma_i, F_i, X_i)$ is an $\vec{e}$-large condition. By Lemma~\ref{lem:decide-smallness-em-second-jump}
we can find $C''$-computably such $i < n$.
$(\sigma_i, F_i, X_i)$ is the desired extension. 
\end{proof}

Using previous lemmas, we can $C''$-compute an infinite descending sequence
of conditions $c_0 = (\epsilon, \emptyset, \omega) \geq c_1 \geq \dots$
together with an infinite increasing sequence of Turing indexes $\vec{e}_0 = \emptyset \subseteq \vec{e}_1 \subseteq \dots$ 
such that for each $s > 0$
\begin{itemize}
	\item[1.] $|\sigma_s| \geq s$, $|F_s| \geq s$, $c_s$ is $\vec{e}_s$-large
	\item[2.] Either $s \in \vec{e}_s$ or $c_s \Vdash \Phi_s^{G \oplus C}(y) \uparrow$ for some $y \in \omega$
	\item[3.] $c_s \Vdash \Phi_e^{G \oplus C}(x) \downarrow$ if $s = \tuple{e,x}$ and $e \in \vec{e}_s$\end{itemize}
where $c_s = (\sigma_s, F_s, X_s)$.
The resulting set $G = \bigcup_s F_s$ is $T_\nu$-transitive up to finite changes 
simultaneously for each $\nu \in \omega$
and $G'' \leq_T C'' \leq_T \emptyset''$.
\end{proof}

\section{Degree bounding the rainbow Ramsey theorem}

The rainbow Ramsey theorem intuitively states that when a coloring over tuples
uses each color a bounded number of times then it has an infinite subset
on which each color is used at most once. This statement
has been extensively studied over the past few years~\cite{csima2009strength,conidis2013random,wang2013cohesive,patey2014somewhere}.
Remarkably, the restriction of the rainbow Ramsey theorem to coloring over pairs of integers
coincides with a well-known notion of algorithmic randomness.

\begin{definition}[Rainbow Ramsey theorem]
Let $n, k \in \omega$. A coloring function $f: [\omega]^n \to \omega$ is \emph{$k$-bounded}
  if for every $y \in \omega$, $\card{f^{-1}(y)} \leq k$. A set $R$ is a \emph{rainbow} for~$f$
  if $f \uh [R]^n$ is injective.
  $\rrt^n_k$ is the statement ``Every $k$-bounded function $f : [\omega]^n \to \omega$
  has an infinite rainbow''.
\end{definition}

A proof of the rainbow Ramsey theorem is due to Galvin who noticed that
it follows easily from Ramsey's theorem.
Hence every computable 2-bounded coloring function $f$ over $n$-tuples has an infinite $\Pi^0_n$ rainbow.
Csima and Mileti proved in \cite{csima2009strength} that every 2-random is $\rrt^2_2$-bounding
and deduced that  $\rrt^2_2$ implies neither $\sads$ nor $\wkl$ over $\omega$-models.
Conidis \& Slaman adapted in \cite{conidis2013random} the argument from Cisma and Mileti 
to obtain $\rca \vdash \ran{2} \imp \rrt^2_2$.

\begin{definition}
A function $f : \omega \to \omega$ is \emph{diagonally non-computable (DNC) relative to~$X$}
if $f(e) \neq \Phi_e^X(e)$ for each $e \in \omega$.
$\dnr[\emptyset']$ is the statement ``For every set $X$, there exists a function DNC relative to the jump of~$X$''.
\end{definition}

\begin{theorem}[J.S. Miller \cite{millercom1}]
$\rrt^2_2$ and $\dnr[\emptyset']$ are computably equivalent.
\end{theorem}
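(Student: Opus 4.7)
The plan is to establish two computable reductions separately: $\dnr[\emptyset'] \leq_c \rrt^2_2$ (the easier direction) and $\rrt^2_2 \leq_c \dnr[\emptyset']$ (the harder one).

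For $\dnr[\emptyset'] \leq_c \rrt^2_2$, I would construct a single computable 2-bounded coloring $f : [\omega]^2 \to \omega$ such that every infinite rainbow for $f$ computes a function DNC relative to $\emptyset'$. The idea is to use the limit lemma to write $\Phi_e^{\emptyset'}(e) = \lim_s h(e, s)$ for a computable approximation $h$, and to have the coloring $f$ encode this approximation. Concretely, one reserves, for each pair $\la e, v\ra$, a special color which is used on at most two pairs of large integers, placed each time the approximation $h(e, \cdot)$ newly stabilizes at $v$. Since the coloring is 2-bounded, an infinite rainbow $R$ can, from sufficiently large elements, avoid reusing the color $\la e, \Phi_e^{\emptyset'}(e)\ra$; reading off the colors appearing on $R$ lets one $R$-computably produce, for each $e$, a value $g(e) \neq \Phi_e^{\emptyset'}(e)$.

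For the reverse direction, $\rrt^2_2 \leq_c \dnr[\emptyset']$, I would fix a computable 2-bounded coloring $f$ and, given a function $g$ DNC relative to $\emptyset'$, construct an infinite rainbow stage by stage. At each stage $s$ one has a finite partial rainbow $F_s$ and must add a new element $x$ whose $f$-color on every pair $\{y, x\}$ with $y \in F_s$ differs from all colors already used within $F_s$. The set of ``bad'' extensions is $\emptyset'$-computable from $F_s$, but its measure (in a suitable sense) is small because $f$ is 2-bounded; this is the measure-theoretic observation underlying the Csima--Mileti proof that 2-randoms compute rainbows. The plan is to replace the appeal to randomness by an appeal to the DNC$[\emptyset']$ function: one codes the partial $\emptyset'$-computable procedure that would produce a bad extension as $\Phi_e^{\emptyset'}(e)$ for a suitably chosen index $e$, and uses $g(e)$ to pick a safe extension.

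The main obstacle is this second direction, since a DNC$[\emptyset']$ function does not a priori give a way to make infinitely many correct choices. The key step is to formulate the task of extending a partial rainbow as a single diagonalization requirement against one $\emptyset'$-partial computable functional — so that $g(e)$, being different from $\Phi_e^{\emptyset'}(e)$, is automatically a witness to a safe extension — and to iterate this with uniform indices. Once this coding is in place, the construction $g$-computably produces an infinite rainbow for $f$, completing the proof of the equivalence.
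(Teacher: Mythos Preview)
The paper does not actually prove this theorem: it is stated as a result of J.~S.~Miller with a citation to \cite{millercom1} and is used as a black box to derive the corollaries that follow. So there is no argument in the paper to compare your proposal against.

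That said, your outline is broadly in line with how the equivalence is established in the literature. A few remarks on the sketch itself. For the direction $\dnr[\emptyset'] \leq_c \rrt^2_2$, your description of the coloring is too vague to be checked as written; the point you need is that from an infinite rainbow one can \emph{uniformly} read off, for each $e$, a value avoiding $\Phi_e^{\emptyset'}(e)$, and your ``placed each time the approximation newly stabilizes'' idea does not obviously deliver that uniformity. For the direction $\rrt^2_2 \leq_c \dnr[\emptyset']$, the crucial technical ingredient you gesture at but do not state is the standard amplification lemma: from a function DNC relative to $X$ one can $X$-uniformly compute, given an index for an $X$-partial computable enumeration of a set of size at most $n$, an element outside that set. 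With this in hand, 2-boundedness ensures that the set of bad extensions of a finite partial rainbow is $\emptyset'$-computably bounded in size, and the amplified DNC function picks a safe next element. Making that lemma explicit would turn your sketch into a proof.
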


\begin{corollary}
$\rrt^2_2$ admits a universal instance.
\end{corollary}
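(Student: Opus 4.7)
The plan is to transfer a universal instance from $\dnr[\emptyset']$ to $\rrt^2_2$ via the computable equivalence provided by Miller's theorem. First I would observe that $\dnr[\emptyset']$ admits an obvious universal instance, namely the computable set $\emptyset$: its solutions are the functions DNC relative to $\emptyset'$, and for any computable instance $Y$ of $\dnr[\emptyset']$ we have $Y' \equiv_T \emptyset'$ uniformly, so by an effective translation of Turing functionals every function DNC relative to $\emptyset'$ computes a function DNC relative to $Y'$.

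Next I would show that having a universal instance is preserved under computable equivalence, which gives the corollary when applied to $\rrt^2_2 \equiv_c \dnr[\emptyset']$. Concretely, applying $\dnr[\emptyset'] \leq_c \rrt^2_2$ to the instance $\emptyset$, I obtain a computable $2$-bounded coloring $f : [\omega]^2 \to \omega$ such that every infinite rainbow for $f$ computes a function DNC relative to $\emptyset'$. I claim $f$ is the desired universal instance of $\rrt^2_2$.

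To verify the claim, fix any computable $2$-bounded coloring $g : [\omega]^2 \to \omega$. Applying $\rrt^2_2 \leq_c \dnr[\emptyset']$ to $g$ yields a $g$-computable (hence computable) set $Y$ such that every function DNC relative to $Y'$ computes a rainbow for $g$. Now given any infinite rainbow $R$ for $f$, the set $R$ computes a function $h$ DNC relative to $\emptyset'$; by the first paragraph, from $h$ one computably obtains a function $h'$ DNC relative to $Y'$; and from $h'$ one computes a rainbow for $g$. Composing these reductions, $R$ computes a rainbow for $g$, so $f$ is universal.

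There is no serious obstacle here: the argument is a short diagram chase through the two directions of the computable reduction. The only point requiring any care is the uniform translation between functions DNC relative to $\emptyset'$ and functions DNC relative to $Y'$ for computable $Y$, which is a standard application of the s-m-n theorem.
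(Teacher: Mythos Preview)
Your proposal is correct and follows essentially the same approach as the paper: the paper simply asserts that having a universal instance is preserved under computable equivalence and that $\dnr[\emptyset']$ has an obvious universal instance, while you spell out both of these points explicitly via the diagram chase. Your treatment is more detailed than the paper's two-line proof, but the underlying idea is identical.
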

\begin{proof}
If $\Psf$ and $\Qsf$ are two principles computably equivalent and $\Qsf$ admits a universal instance,
then so does $\Psf$. As $\dnr[\emptyset']$ admits a universal instance (any function DNC relative to $\emptyset'$),
so does $\rrt^2_2$.
\end{proof}

\begin{corollary}\label{cor:pa-degree-bounding-rrt22}
For every $X \gg \emptyset'$, there exists a $Y \gg_{\rrt^2_2} \emptyset$ such
that $Y' \leq_T X$.
\end{corollary}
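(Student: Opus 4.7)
By Miller's computable equivalence $\rrt^2_2 \equiv_c \dnr[\emptyset']$ just stated, a set $Y$ bounds $\rrt^2_2$ if and only if $Y$ computes a function DNC relative to $\emptyset'$. Given $X \gg \emptyset'$, it therefore suffices to construct $Y$ with $Y' \leq_T X$ computing such a function.

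A naïve approach via the Jockusch--Soare low basis theorem relativized to $\emptyset'$ applied to the $\Pi^{0,\emptyset'}_1$ class of DNC-$\emptyset'$ functions only yields a member $Y$ with $(Y\oplus\emptyset')' \leq_T \emptyset''$, which delivers $Y' \leq_T X$ only when $X \geq_T \emptyset''$. My plan is instead to carry out a first-jump-control forcing construction in direct analogy with Theorem~\ref{thm:low2-degree-bounding-em-first-jump}, with the PA-over-$\emptyset'$ strength of $X$ playing the role that $P \gg \emptyset'$ plays there. Fix a low set $C$ so that $X \gg C'$. Take forcing conditions $(\sigma, F, R)$ in which $F$ is a finite partial DNC-$\emptyset'$ function built so far, $R$ is an infinite reservoir low over $C$ consisting of elements on which $F$ may still be extended while remaining DNC-$\emptyset'$, and $\sigma$ records the commitments made about the jump values $\Phi^G_e(e)$ of the generic $G = \bigcup_s F_s$.

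At each stage $e$, use $X$ to perform two operations. First, add one element of $R$ to $F$ while shrinking $R$ to an infinite subset still low over $C$, so as to force $G$ to be infinite and DNC-$\emptyset'$; this is the analogue of Lemma~\ref{lem:infinite-em-first-jump-control}. Second, decide via a $\Pi^{0,\emptyset'}_1$-separation whether some extension of the current condition forces $\Phi^G_e(e)\!\downarrow$, and choose a non-empty side of the resulting dichotomy; this is the analogue of Lemma~\ref{lem:force-jump-em-first-jump-control}. The limit $G$ is then a DNC-$\emptyset'$ function with $G' \leq_T X$, and $Y = G$ satisfies the conclusion of the corollary.

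The main obstacle is to verify these two analogues in the DNC-$\emptyset'$ setting: specifically, that reservoir-shrinking can always be performed inside the DNC-$\emptyset'$ class while preserving lowness over~$C$, and that the jump-deciding dichotomy is genuinely a $\Pi^{0,\emptyset'}_1$-separation question answerable by an oracle of PA degree over~$\emptyset'$, rather than a $\Sigma^{0,\emptyset'}_1$ search that would require $X \gg \emptyset''$.
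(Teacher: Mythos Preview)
Your proposal is not a proof but a plan, and you say so yourself: the ``main obstacle'' of verifying the two analogues of Lemmas~\ref{lem:infinite-em-first-jump-control} and~\ref{lem:force-jump-em-first-jump-control} is left entirely open. Beyond incompleteness, the forcing framework you describe is a poor fit for the target object. Mathias conditions $(F,R)$ with an infinite reservoir $R$ are designed to build an infinite \emph{subset} of $\omega$; a function DNC relative to $\emptyset'$ is a total map $g:\omega\to\omega$ with $g(e)\neq\Phi_e^{\emptyset'}(e)$, and the phrase ``reservoir of elements on which $F$ may still be extended while remaining DNC-$\emptyset'$'' has no clear meaning in that context. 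A workable first-jump-control argument for DNC-$\emptyset'$ would have to look quite different (tree-based conditions, or a bushy-tree construction), and you have not indicated how the $\Pi^{0,\emptyset'}_1$ separation you mention would actually be set up.

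The paper avoids all of this. Its proof is three lines of citation: since $\rrt^2_2$ has a universal instance $f$ (immediate from Miller's equivalence with $\dnr[\emptyset']$), and since $\rrt^2_2\leq_c\rt^2_2$ by Csima--Mileti, there is a single computable $2$-coloring $g:[\omega]^2\to 2$ every infinite homogeneous set of which computes an $f$-rainbow and hence bounds $\rrt^2_2$. Then Cholak--Jockusch--Slaman's first-jump-control theorem for $\rt^2_2$ gives, for every $X\gg\emptyset'$, an infinite $g$-homogeneous set $H$ with $H'\leq_T X$; take $Y=H$. No new forcing is needed---the work was already done for $\rt^2_2$, and the universal instance lets you cash it in for $\rrt^2_2$.
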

\begin{proof}
Let $f : [\omega]^2 \to \omega$ be a universal instance of $\rrt^2_2$.
By Csima \& Mileti \cite{csima2009strength}, $\rrt^2_2 \leq_c \rt^2_2$,
so there exists a computable coloring $g : [\omega]^2 \to 2$
such that every infinite $g$-homogeneous set
computes an infinite $f$-rainbow, hence bounds $\rrt^2_2$.
By Cholak \& al. \cite{cholak2001strength}, for every $X \gg \emptyset'$
there exists an infinite $f$-homogeneous set $H$
such that $H' \leq_T X$. In particular $H \gg_{\rrt^2_2} \emptyset$.
\end{proof}

\begin{corollary}\label{cor:low2-degree-bounding-rrt22}
There exists a low${}_2$ degree bounding $\rrt^2_2$.
\end{corollary}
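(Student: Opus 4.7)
The plan is to combine Corollary~\ref{cor:pa-degree-bounding-rrt22} with a standard jump-inversion trick: to make the bounding degree low${}_2$, it suffices to feed the corollary an oracle $X \gg \emptyset'$ which is itself low over $\emptyset'$. Concretely, the $\Pi^{0,\emptyset'}_1$ class of $\{0,1\}$-valued functions DNC relative to $\emptyset'$ is nonempty, so by the low basis theorem relativized to $\emptyset'$ one obtains a set $X \gg \emptyset'$ with $X' \leq_T \emptyset''$.

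Applying Corollary~\ref{cor:pa-degree-bounding-rrt22} to this $X$ yields a set $Y \gg_{\rrt^2_2} \emptyset$ such that $Y' \leq_T X$. Since Turing reducibility is preserved under the jump, this gives $Y'' \leq_T X'$, and then combining with $X' \leq_T \emptyset''$ we conclude $Y'' \leq_T \emptyset''$, that is, $Y$ is low${}_2$. Hence $Y$ is a low${}_2$ set bounding $\rrt^2_2$, as desired.

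There is essentially no obstacle here: the substantive work has already been done in Corollary~\ref{cor:pa-degree-bounding-rrt22} (which in turn leverages the Cholak--Jockusch--Slaman first-jump control construction for $\rt^2_2$) and in the computable reduction $\rrt^2_2 \leq_c \rt^2_2$ of Csima and Mileti. The only thing to check is the bookkeeping of jumps, namely that passing $Y' \leq_T X$ through one more jump costs exactly what the low-over-$\emptyset'$ bound on $X$ provides.
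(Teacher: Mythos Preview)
Your proof is correct and follows essentially the same route as the paper: obtain $X \gg \emptyset'$ low over $\emptyset'$ via the relativized low basis theorem, apply Corollary~\ref{cor:pa-degree-bounding-rrt22} to get $Y \gg_{\rrt^2_2} \emptyset$ with $Y' \leq_T X$, and chase the jumps to conclude $Y'' \leq_T \emptyset''$. The only difference is that you spell out the $\Pi^{0,\emptyset'}_1$ class explicitly and add a sentence of commentary, neither of which changes the argument.
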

\begin{proof}
By the relativized low basis theorem, there exists a set $X \gg \emptyset'$ low over $\emptyset'$.
By Corollary~\ref{cor:pa-degree-bounding-rrt22}, there exists a set $Y \gg_{\rrt^2_2} \emptyset$
such that $Y' \leq_T X$, hence $Y'' \leq_T X' \leq_T \emptyset''$.
So $Y$ is low${}_2$.
\end{proof}

We can generalize Corollary~\ref{cor:low2-degree-bounding-rrt22} to colorings over arbitrary tuples.
For this, we need to restrict ourselves to the study of a particular class of colorings.

\begin{definition}
A coloring $f:[\omega]^{n+1} \to \omega$ is \emph{normal} if $f(\sigma, a) \neq f(\tau, b)$ 
for each $\sigma, \tau \in [\omega]^n$, whenever $a \neq b$.
\end{definition}

Wang proved in~\cite{wang2013cohesive} that for every 2-bounded coloring $f : [\omega]^n \to \omega$,
every $f$-random computes an infinite set $X$ on which $f$ is normal.
The author refined in~\cite{patey2014somewhere} this result 
by proving that every function d.n.c. relative to $f$ computes such a set.

\begin{theorem}
For each $n \geq 0$, there exists a set $X \gg_{\rrt^{n+2}_2} \emptyset$
low${}_2$ over $\emptyset^{(n)}$.
\end{theorem}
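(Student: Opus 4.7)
The plan is to relativize the proof of Corollary~\ref{cor:low2-degree-bounding-rrt22} to colorings of $(n+2)$-tuples. First I would establish the analogue of Corollary~\ref{cor:pa-degree-bounding-rrt22}: for every $X \gg \emptyset^{(n+1)}$, there is a set $Y \gg_{\rrt^{n+2}_2} \emptyset$ with $Y' \leq_T X$. Applying this with $X \gg \emptyset^{(n+1)}$ low over $\emptyset^{(n+1)}$ (by the relativized low basis theorem) then yields $Y'' \leq_T X' \leq_T \emptyset^{(n+2)}$, which is exactly $Y$ being low${}_2$ over $\emptyset^{(n)}$.

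To construct $Y$ from $X$, I would first reduce $\rrt^{n+2}_2$ on $2$-bounded colorings to $\rt^{n+2}_2$ via normality. Fix a computable $2$-bounded $f : [\omega]^{n+2} \to \omega$. Since $X$ is of PA degree, it computes a function DNC relative to $f$, and hence, by the author's prior theorem, an infinite set $Z \leq_T X$ on which $f$ is normal. On $[Z]^{n+2}$, normality together with $2$-boundedness implies that each tuple $\sigma$ has at most one $f$-twin inside $[Z]^{n+2}$, locatable by a bounded $Z$-computable search because the twin must share the top element of $\sigma$. Define a $Z$-computable coloring $g : [Z]^{n+2} \to 2$ by $g(\sigma) = 0$ if the twin of $\sigma$ in $[Z]^{n+2}$ is absent or lexicographically larger than $\sigma$, and $g(\sigma) = 1$ otherwise. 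Every infinite $g$-homogeneous set is then an $f$-rainbow: a pair $\sigma <_{\mathrm{lex}} \tau$ in such a set with $f(\sigma) = f(\tau)$ would force $g(\sigma) = 0$ and $g(\tau) = 1$, contradicting homogeneity.

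I would then invoke the $(n+2)$-dimensional analogue of the Cholak--Jockusch--Slaman jump bound used in the proof of Corollary~\ref{cor:pa-degree-bounding-rrt22}: for every $X \gg \emptyset^{(n+1)}$ and every $X$-computable coloring $h : [\omega]^{n+2} \to 2$, there is an infinite $h$-homogeneous set $H$ with $H' \leq_T X$, uniformly in $h$. Applied to the coloring $g$ produced above, this yields a rainbow $H_f \leq_T X$ for $f$ with $H_f' \leq_T X$, uniformly in $f$. Forming the effective join of the $H_f$ along a computable enumeration of all $2$-bounded computable colorings $f$ produces the required $Y$ with $Y' \leq_T X$ and $Y \gg_{\rrt^{n+2}_2} \emptyset$.

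The main technical obstacle is verifying this strengthened Cholak--Jockusch--Slaman bound at dimension $n+2$: the existence of an $\omega$-model of $\rt^{n+2}_2$ containing only low${}_2$-over-$\emptyset^{(n)}$ sets (used elsewhere in the paper) supplies one homogeneous set of the right complexity, but tracking the explicit inequality $H' \leq_T X$ through the iterated cohesive-plus-stable decomposition of Cholak et al., and ensuring sufficient uniformity in the input coloring so that the effective join over $f$ preserves the jump bound, are the genuinely delicate steps.
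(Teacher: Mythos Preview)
Your reduction of $\rrt^{n+2}_2$ to $\rt^{n+2}_2$ via normality is sound, and the lex-order coloring $g$ does turn homogeneous sets into rainbows. The gap is in the jump bound you invoke. As stated, the claim ``for every $X \gg \emptyset^{(n+1)}$ and every $X$-computable $h : [\omega]^{n+2} \to 2$ there is an infinite homogeneous $H$ with $H' \leq_T X$'' is false already at $n = 0$. Relativizing to $X$ the Downey--Hirschfeldt--Lempp--Solomon construction of a computable stable $2$-coloring of pairs with no infinite low homogeneous set yields an $X$-computable coloring of pairs all of whose infinite homogeneous sets $H$ satisfy $(H \oplus X)' \not\leq_T X'$; but $H' \leq_T X$ would give $H \leq_T X$ and hence $(H \oplus X)' \equiv_T X'$, a contradiction. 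The source of the trouble is that you let $Z$ --- and hence $g$ --- inherit the full complexity of $X$, which is PA over $\emptyset^{(n+1)}$, whereas the Cholak--Jockusch--Slaman machinery only buys you one jump beyond the complexity of the coloring. A repair is available: take the DNC function (hence $Z$) to be low via the low basis theorem, rather than merely $\leq_T X$, so that $g$ is computable in a fixed low set; then an iterated cohesive-plus-stable decomposition relativized to that low set does give homogeneous $H$ with $H' \leq_T X$. But this has to be argued, and the uniformity needed for the effective join over all $f$ is genuinely delicate, as you note.

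The paper takes a different and cleaner route. It argues by induction on $n$, proving the relativized statement that for every $A$ there is an $X$ low${}_2$ over $A^{(n)}$ with $X \gg_{\rrt^{n+2}_2} A$. For the step, an $A$-computable $2$-bounded coloring of $(n+3)$-tuples is reduced directly to an $(A \oplus R \oplus C)'$-computable $2$-bounded coloring of $(n+2)$-tuples, using a low $A$-random $R$ to obtain normality and a set $C$ low${}_2$ over $A \oplus R$ with $C' \gg (A \oplus R)'$ to obtain the limit coloring via cohesiveness; the induction hypothesis then applies with base $(A \oplus R \oplus C)'$. By staying within the rainbow framework rather than passing through $\rt^{n+2}_2$, the paper keeps the jump bookkeeping to a single clean step per dimension.
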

\begin{proof}
We prove by induction over~$n$ that for every set~$A$
there exists a set $X$ low${}_2$ over $A^{(n)}$ such that $X \gg_{\rrt^{n+2}_2} A$.
Case $n = 0$ is a relativization of Corollary~\ref{cor:low2-degree-bounding-rrt22}.
Suppose for each set $A$, there exists a set $X$ low${}_2$ over $A^{(n)}$
such that $X \gg_{\rrt^{n+2}_2} A$.
Fix a set $A$, an $A$-random set $R$ low over $A$
and a set $C$ low${}_2$ over $A \oplus R$ such that $C' \gg (A \oplus R)'$.
In particular $R \oplus C$ is low${}_2$ over $A$.
By induction hypothesis, there exists a set $X$ low${}_2$ over $(A \oplus R \oplus C)^{(n+1)}$
such that $X \gg_{\rrt^{n+2}_2} (A \oplus R \oplus C)'$.
In particular $X$ is low${}_2$ over $A^{(n+1)}$.
We claim that $X \gg_{\rrt^{n+3}_2} A$.

Fix an $A$-computable 2-bounded coloring $f : [\omega]^{n+3} \to \omega$.
By relativizing Lemma~4.3 in~\cite{wang2013cohesive}, every $A$-random computes
an infinite set $Y$ such that $f$ restricted to $Y$ is normal.
So $X \oplus R$ computes such a set $Y$.
For each $\sigma, \tau \in [Y]^{n+2}$, let
$$
U_{\sigma, \tau} = \{s \in Y : f(\sigma, s) = f(\tau, s)\}
$$
By Jockusch \& Frank~\cite{jockusch1993cohesive}, as $C' \gg (A \oplus R)'$,
$A \oplus R \oplus C$ computes an infinite $\vec{U}$-cohesive set $Z \subseteq Y$. 
In particular the following limit exists
$$
\tilde{f}(\sigma) = \lim_{s \in Z} min \{ \tau \leq_{lex} \sigma : f(\sigma, s) = f(\tau, s) \}
$$
$\tilde{f}$ is a 2-bounded $(A \oplus R \oplus C)'$-computable coloring of $(n+2)$-tuples,
so $X$ bounds an infinite $\tilde{f}$-rainbow $H$.
$A \oplus H$ computes an infinite $f$-rainbow, so $X$ bounds an infinite $f$-rainbow.
\end{proof}

\subsection{A stable rainbow Ramsey theorem}

A common process in the strength analysis of a principle consists
of splitting the statement into a stable and a cohesive version.
The standard notion of stability does not apply for the rainbow Ramsey theorem
as no stable coloring is $k$-bounded for some~$k \in \omega$.
Nevertheless one can define certain notions of stability for the rainbow Ramsey theorem~\cite{patey2014somewhere}.
Mileti proved in~\cite{miletipartition} that the only $\Delta^0_2$ degree bounding $\srt^2_2$ is $\mathbf{0}'$.
In fact, his priority argument can be adapted to prove the same result on a much weaker principle
coinciding with a stable version of the rainbow Ramsey theorem for pairs.

\begin{definition}
A coloring $f : [\omega]^2 \to \omega$ is 
\emph{rainbow-stable} if for every $x \in \omega$, one of the following holds:
\begin{itemize}
	\item[(a)] There exists a $y \neq x$ such that 
	$(\forall^\infty s)f(x, s) = f(y,s)$
	\item[(b)] $(\forall^{\infty} s) \card{\set{y \neq x : f(x, s) = f(y, s)}} = 0$
\end{itemize}
$\srrt^2_2$ is the statement ``every rainbow-stable 2-bounded coloring $f:[\omega]^2 \to \omega$
has a rainbow.''
\end{definition}

Introduced by the author in~\cite{patey2014somewhere},
he proved that $\srrt^2_2$ is computably reducible to $\semo$ and $\sts(2)$. This principle admits 
many computably equivalent formulations. We are particularly interested
in a characterization which can be seen as a stable notion of $\dnr[\emptyset']$.

\begin{definition}
Given a function $f : \omega \to \omega$, a function $g$ is \emph{$f$-diagonalizing}
if $(\forall x)[f(x) \neq g(x)]$.
$\sdnr[\emptyset']$ is the statement ``Every $\Delta^0_2$
function $f : \omega \to \omega$ has an $f$-diagonalizing function''.
\end{definition}

\begin{theorem}[Patey~\cite{patey2014somewhere}]
$\srrt^2_2$ and $\sdnr[\emptyset']$ are computably equivalent.
\end{theorem}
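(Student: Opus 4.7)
My plan is to establish both reductions, mirroring Miller's equivalence $\rrt^2_2 \equiv_c \dnr[\emptyset']$ recalled just above but with the partial $\emptyset'$-computable function appearing there replaced by the total $\Delta^0_2$ \emph{twin function} extracted from a rainbow-stable coloring.

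For the forward reduction $\srrt^2_2 \leq_c \sdnr[\emptyset']$, I would extract from a rainbow-stable 2-bounded coloring $f : [\omega]^2 \to \omega$ its twin function $t : \omega \to \omega$, where $t(x)$ is the unique $y \neq x$ witnessing case~(a) of rainbow-stability at $x$ when such a $y$ exists, and $t(x) = x$ in case~(b). Uniqueness follows from 2-boundedness, and existence of $y$ is $\Sigma^0_2$ uniformly in $x$, so $t$ is $\Delta^0_2$. I would then package $t$, together with the finitely many small-stage color coincidences that rainbow-stability leaves uncontrolled, into a $\Delta^0_2$ function $h$ defined on codes of finite partial rainbows $\sigma$, whose value marks the ``unsafe'' next elements extending $\sigma$. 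An $h$-diagonalizing $g$ obtained from $\sdnr[\emptyset']$ then drives a greedy construction of an infinite rainbow $R = \{r_0 < r_1 < \dots\}$, reading $r_{n+1}$ off from $g$ on the code of $(r_0, \dots, r_n)$.

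For the converse $\sdnr[\emptyset'] \leq_c \srrt^2_2$, given $h : \omega \to \omega$ $\Delta^0_2$ with computable approximation $(h_s)$, I would construct a computable 2-bounded coloring $f$ on $\omega \cong \omega \times \omega$ whose eventual twin structure encodes $h$: for each column $e$, the two elements $(e, h(e))$ and $(e, h(e)+1)$ form the unique twin pair of that column, with every other element falling into case~(b). Any infinite rainbow $R$ therefore omits at least one element from each pair $\{(e, h(e)), (e, h(e)+1)\}$, and the value $g(e) = i$ obtained by searching for consecutive elements $(e,i), (e,i+1) \in R$ is automatically $\neq h(e)$. Dispersing each column densely through $\omega$ should guarantee that every infinite rainbow supplies such a consecutive pair in every column.

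The main obstacle is this reverse reduction, where $f$ must be computable even though its limit twin structure follows the non-computable $h$. Each time $h_s(e)$ changes from $i$ to $j$, the fresh colors already spent to synchronize the pairs $\{(e, i), z\}$ with $\{(e, i+1), z\}$ at entries $z$ up to the current threshold must be cleanly retired without letting any color be reused a third time (which would break 2-boundedness), and the new synchronization at $j$ must then take effect from that stage onward. Verifying that this finite-injury-style bookkeeping yields a genuinely rainbow-stable limit coloring---not merely an ``almost stable'' one---and that the dispersal of columns truly forces every infinite rainbow to hit consecutive pairs in every column, is where the combinatorics will demand the most care.
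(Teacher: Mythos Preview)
The paper does not prove this theorem: it is stated with attribution to~\cite{patey2014somewhere} and no argument is given in the present paper. There is therefore no ``paper's own proof'' here to compare your proposal against.

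That said, a brief assessment of your outline. The forward direction is on the right track: extracting the $\Delta^0_2$ twin function from a rainbow-stable 2-bounded coloring and using a diagonalizing function to drive a greedy rainbow construction is the natural strategy, and the bookkeeping you describe (handling the finitely many sporadic coincidences below a threshold together with the limit twin) is exactly what is needed.

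The backward direction as you sketch it has a genuine gap. You propose that an infinite rainbow $R$ must contain two \emph{consecutive} elements $(e,i),(e,i{+}1)$ from every column~$e$, and you justify this by ``dispersing each column densely through~$\omega$''. But density of a column inside~$\omega$ does not by itself force a rainbow to meet that column at all, let alone at two consecutive indices: nothing in the 2-boundedness or rainbow-stability prevents an infinite rainbow from living entirely in the complement of column~$e$. To make this work you would have to arrange the twin structure so that avoiding two consecutive column-$e$ elements forever forces $R$ to be finite (for instance by making \emph{cross-column} pairs be twins in a pattern that leaves no room), and you have not indicated any mechanism for that. Until this is repaired, the reduction $\sdnr[\emptyset'] \leq_c \srrt^2_2$ is not established by your plan.
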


The following theorem extends Mileti's result to $\sdnr[\emptyset']$.
As $\sdnr[\emptyset']$ is computably below many stable principles,
we shall deduce a few more non-universality results.

\begin{theorem}\label{thm:incomplete-srrt22}
For every $\Delta^0_2$ incomplete set $X$,
there exists a $\Delta^0_2$ function $f : \omega \to \omega$
such that $X$ computes no $f$-diagonalizing function.
\end{theorem}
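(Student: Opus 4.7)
The plan is to adapt Mileti's diagonalization argument from the proof of Lemma~\ref{lem:sts2-universal} to the present setting, exploiting the computable equivalence of $\srrt^2_2$ and $\sdnr[\emptyset']$. Thus we aim to build a $\Delta^0_2$ function $f: \omega \to \omega$ such that for every Turing index $e$ with $\Phi_e^X$ total, there is some $n$ with $\Phi_e^X(n) = f(n)$; the theorem statement then follows by identifying such an $f$ with an $f$-diagonalizing-resistant function. Fix a computable approximation $X_s \to X$. By the Recursion Theorem, assume in advance a $\emptyset'$-index $d$ of the function $f$ being built, so $f = \Phi_d^{\emptyset'}$.

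The requirements, in priority order, are
\[
R_e: \Phi_e^X \text{ is not total, or } \exists n \ \Phi_e^X(n) = \Phi_d^{\emptyset'}(n).
\]
Each $R_e$ is $\Sigma^0_{X,2}$, hence we may effectively find an index $m_e$ such that $R_e$ holds iff $m_e \in X''$. Iterating Shoenfield's limit lemma produces a $\emptyset'$-computable $g(e,s,t)$ with $\lim_t \lim_s g(e,s,t) = 1$ iff $R_e$ holds; this serves as our $\emptyset'$-computable gauge for requirement satisfaction. The $\emptyset'$-effective construction then proceeds in stages: at each stage $s$, for the least-priority requirement $R_e$ (with $e \leq s$) currently appearing unsatisfied in the approximation, we select a fresh witness $n$ and set $f_s(n) := \Phi_{e,s}^{X_s}(n)$ whenever the latter converges. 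Standard finite-injury bookkeeping ensures that each requirement acts only finitely often.

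The verification proceeds by induction on priority: assuming $R_{e'}$ is satisfied for every $e' < e$, eventually the approximation for $R_e$ stabilizes and $R_e$ is assigned a final witness $n_e$; if $\Phi_e^X$ is total, $\lim_s \Phi_{e,s}^{X_s}(n_e) = \Phi_e^X(n_e)$, our committed value matches, and $R_e$ is satisfied.

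The main obstacle will be arguing that the resulting $f$ is genuinely $\Delta^0_2$, which is where the incompleteness of $X$ enters. Were $X$ Turing-equivalent to $\emptyset'$, the construction would loop indefinitely on the requirement corresponding to the $X$-computation of $f + 1$, destabilizing $f_s$; the hypothesis $X \not\geq_T \emptyset'$ is invoked precisely to preclude such pathological behavior and to conclude that every requirement settles, so that $f$ is indeed a $\Delta^0_2$ function satisfying the theorem.
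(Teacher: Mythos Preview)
Your proposal has a genuine gap: the technique of Lemma~\ref{lem:sts2-universal} does not transfer to this setting, and the missing ingredient is precisely the concrete mechanism by which incompleteness of $X$ is exploited.

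In Lemma~\ref{lem:sts2-universal} (and Lemma~\ref{lem:sads-universal}) the action taken when a requirement appears unsatisfied is \emph{independent of the target}: one simply sets $c(s)=i$, and any infinite $Z_e$ must eventually contain such an $s$. Here, by contrast, your action ``set $f_s(n):=\Phi_{e,s}^{X_s}(n)$'' must actually hit the \emph{correct} value $\Phi_e^X(n)$. Nothing in your sketch guarantees that the approximated value stabilizes to the true one, nor that the stage at which you act can be bounded $\emptyset'$-computably. Indeed, deciding whether $\Phi_e^X(n)\!\downarrow$ is an $X'$-question, and for a general $\Delta^0_2$ incomplete $X$ (say a high incomplete c.e.\ set) there is no $\emptyset'$-computable bound on the stage of convergence. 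Your final paragraph acknowledges that incompleteness must enter here but offers only an intuition, not a mechanism; ``standard finite-injury bookkeeping'' does not by itself prevent a requirement from acting infinitely often when the approximation $\Phi_{e,s}^{X_s}(n)$ oscillates. There is also a level confusion: you describe a $\emptyset'$-effective construction yet build $f$ as a limit $\lim_s f_s$, which would only yield a $\Delta^0_3$ function.

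The paper's proof is not a variant of Lemma~\ref{lem:sts2-universal} but an adaptation of Mileti's Theorem~5.3.7, and works at the computable level with a permitting-style device. One waits to act for $\Rcal_e$ on a witness $a$ until $u_{e,s}(a)<\theta_s(a)$, where $u_e$ is the use of $\Phi_e^X$ and $\theta(a)$ is the stage at which $a$ enters $\emptyset'$. This condition ensures that once $D_s$ has settled below $\theta(a)$ the committed value $\Phi_{e,s}^{D_s}(a)$ is already the true value $\Phi_e^D(a)$, so $f_s(a)$ stabilizes. Incompleteness is then used concretely: if no such witness $a$ ever appeared for a total $\Phi_e^D$, one would have $k\in\emptyset'\Rightarrow k\in\emptyset'_{u_e(k)}$ for cofinitely many $k$, whence $\emptyset'\leq_T u_e\leq_T D$, a contradiction. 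This permitting argument is the missing idea in your proposal.
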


\begin{corollary}
A $\Delta^0_2$ degree $\mathbf{d}$ bounds $\srrt^2_2$ iff $\mathbf{d} = \mathbf{0}'$.
\end{corollary}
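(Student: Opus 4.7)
The plan is to prove the two directions separately and lean heavily on the computable equivalence $\srrt^2_2 \equiv_c \sdnr[\emptyset']$ together with Theorem~\ref{thm:incomplete-srrt22}, both of which I may cite.

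For the easy direction, I would show that $\mathbf{0}'$ bounds $\srrt^2_2$. Given any computable rainbow-stable 2-bounded coloring $c$, translate it via the equivalence $\srrt^2_2 \equiv_c \sdnr[\emptyset']$ into a $\Delta^0_2$ function $f$ such that any $f$-diagonalizing function computes (together with $c$) a rainbow for~$c$. Then the function $g(x) = f(x) + 1$ is $\emptyset'$-computable and trivially $f$-diagonalizing, so $\emptyset'$ computes a solution to~$c$. Hence $\mathbf{0}'$ is $\srrt^2_2$-bounding.

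For the hard direction, let $\mathbf{d}$ be a $\Delta^0_2$ degree that bounds $\srrt^2_2$, and assume toward a contradiction that $\mathbf{d} \neq \mathbf{0}'$. Pick a $\Delta^0_2$ set $X$ of degree $\mathbf{d}$; since $\mathbf{d} \neq \mathbf{0}'$, $X$ is $\Delta^0_2$ incomplete. By Theorem~\ref{thm:incomplete-srrt22} there is a $\Delta^0_2$ function $f$ such that $X$ computes no $f$-diagonalizing function. Using the computable reduction from $\sdnr[\emptyset']$ to $\srrt^2_2$, translate $f$ into a computable rainbow-stable 2-bounded coloring $c_f$ so that every rainbow of $c_f$ computes an $f$-diagonalizing function. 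Then $X$ (and hence $\mathbf{d}$) computes no solution to $c_f$, contradicting the assumption that $\mathbf{d}$ bounds $\srrt^2_2$. Therefore $\mathbf{d} = \mathbf{0}'$.

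There is essentially no obstacle at the level of this corollary: all the combinatorial and priority-argument work is packaged inside Theorem~\ref{thm:incomplete-srrt22}, and the only thing to be careful about is keeping the direction of the computable reduction $\srrt^2_2 \equiv_c \sdnr[\emptyset']$ straight when shuttling instances and solutions back and forth. The real difficulty lies in Theorem~\ref{thm:incomplete-srrt22} itself, whose proof will presumably adapt Mileti's finite-injury construction for $\srt^2_2$ to the $\sdnr[\emptyset']$ setting.
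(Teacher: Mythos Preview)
Your proposal is correct and follows essentially the same route as the paper: the hard direction is identical (Theorem~\ref{thm:incomplete-srrt22} plus the equivalence $\srrt^2_2 =_c \sdnr[\emptyset']$), and your easy direction via the explicit diagonalizer $g(x)=f(x)+1$ is only a cosmetic variant of the paper's, which instead cites $\srrt^2_2 \leq_c \srt^2_2$ to obtain a $\Delta^0_2$ solution directly.
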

\begin{proof}
As $\srrt^2_2 \leq_c \srt^2_2$, any computable instance of $\srrt^2_2$ has a $\Delta^0_2$ solution.
So~$\mathbf{0}'$ bounds $\srrt^2_2$. If $\mathbf{d}$ is incomplete, then by Theorem~\ref{thm:incomplete-srrt22}
and by $\srrt^2_2 =_c \sdnr[\emptyset']$, there is a computable instance of $\srrt^2_2$ such that $\mathbf{d}$ bounds no solution.
\end{proof}

\begin{corollary}
No statement $\Psf$ such that $\srrt^2_2 \leq_c \Psf \leq_c \srt^2_2$ admits a universal instance.
\end{corollary}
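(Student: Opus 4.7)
The plan is a direct contradiction argument that piggybacks on the ``another proof'' of the non-universality of $\srt^2_2$ sketched in the introduction. Suppose, for contradiction, that some $\Psf$ satisfying $\srrt^2_2 \leq_c \Psf \leq_c \srt^2_2$ admits a universal instance $U$.

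First I would unfold $\Psf \leq_c \srt^2_2$ at the computable instance $U$: there exists a computable instance $V$ of $\srt^2_2$ all of whose solutions compute a solution to~$U$. Then I would extract an incomplete $\Delta^0_2$ solution $H$ of $V$ by exactly the argument alluded to in the introduction: $V$ is a stable 2-coloring whose limit coloring partitions $\omega$ into two $\Delta^0_2$ pieces, so an application of the Hirschfeldt--Jockusch--Kjos-Hanssen--Lempp--Slaman result from~\cite{hirschfeldt2008strength}, asserting that every $\Delta^0_2$ set has an infinite incomplete $\Delta^0_2$ subset in itself or in its complement, yields an incomplete $\Delta^0_2$ infinite homogeneous set $H$ for $V$. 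Since $H$ computes a solution to $U$ and $U$ is universal, $H$ itself bounds $\Psf$.

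Next I would transfer bounding along the other reduction $\srrt^2_2 \leq_c \Psf$. Given any computable instance $I$ of $\srrt^2_2$, the reducibility supplies an $I$-computable (hence computable) instance $J$ of $\Psf$ whose solutions compute, together with $I$, a solution to~$I$. Since $H$ bounds $\Psf$, $J$ admits an $H$-computable solution, which in turn yields an $H$-computable solution to~$I$. Hence $H$ bounds $\srrt^2_2$. But $H$ is $\Delta^0_2$ and incomplete, contradicting the preceding corollary that the only $\Delta^0_2$ degree bounding $\srrt^2_2$ is $\mathbf{0}'$.

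The one delicate step is the extraction of the incomplete $\Delta^0_2$ $V$-homogeneous set $H$: the cited theorem only produces an incomplete $\Delta^0_2$ subset of the limit set of $V$, and a priori an additional thinning is required to make it $V$-homogeneous without raising its Turing degree. However, this is precisely the content of the ``another proof'' of non-universality of $\srt^2_2$ recalled in the introduction and can be imported verbatim. The remainder of our argument is routine bookkeeping from the definitions of $\leq_c$ and of universal instance.
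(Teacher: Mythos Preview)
Your proposal is correct and follows essentially the same approach as the paper's proof: both use the Hirschfeldt--Jockusch--Kjos-Hanssen--Lempp--Slaman result to extract, via $\Psf \leq_c \srt^2_2 \leq_c D^2_2$, an incomplete $\Delta^0_2$ solution to an arbitrary computable $\Psf$-instance, and then invoke Theorem~\ref{thm:incomplete-srrt22} (equivalently, its corollary) together with $\srrt^2_2 \leq_c \Psf$ to exhibit a computable $\Psf$-instance that this solution fails to solve. The only cosmetic difference is that you phrase it as a contradiction from an assumed universal instance, whereas the paper argues directly that no computable instance is universal; the ``delicate step'' you flag is exactly the implicit content of $\srt^2_2 \leq_c D^2_2$ used in the paper.
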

\begin{proof}
By \cite[Corollary 4.6]{hirschfeldt2008strength} every $\Delta^0_2$ set or its complement
has an incomplete $\Delta^0_2$ infinite subset. As $\Psf \leq_c \srt^2_2 \leq_c D^2_2$,
every computable instance $U$ of $\Pcal$ has a $\Delta^0_2$ incomplete solution $X$.
By Theorem~\ref{thm:incomplete-srrt22}, there exists a 
computable coloring $f : [\omega]^2 \to \omega$ such that $X$ computes no infinite $f$-rainbow.
As $\srrt^2_2 \leq_c \Psf$, there exists a computable instance of $\Psf$ such that $X$ does not compute a solution
to it. Hence $U$ is not a universal instance of $\Psf$.
\end{proof}

\begin{corollary}
None of $\srrt^2_2$, $\semo$, $\sts(2)$ and $\sfs(2)$ admits a universal instance.
\end{corollary}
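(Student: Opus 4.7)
The plan is to deduce this corollary directly from the preceding corollary by verifying, for each of the four principles $\Psf$ in the list, the chain $\srrt^2_2 \leq_c \Psf \leq_c \srt^2_2$.

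The case $\Psf = \srrt^2_2$ is immediate, using the paper's earlier observation (obtained via Csima--Mileti) that $\srrt^2_2 \leq_c \srt^2_2$. For $\Psf \in \set{\semo, \sts(2)}$, the lower bound $\srrt^2_2 \leq_c \Psf$ is exactly the author's prior result from \cite{patey2014somewhere}, cited in the paragraph introducing $\srrt^2_2$. The upper bound $\Psf \leq_c \srt^2_2$ I would justify by a uniform observation: given a stable 2-coloring $f : [\omega]^2 \to 2$, any infinite $f$-homogeneous set $H$ is simultaneously a transitive subtournament of the tournament $T$ defined by $T(x,y) \iff f(x,y) = 1$ for $x < y$, and thin for $f$ (it avoids the color opposite to its homogeneous color). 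Hence any solution to the $\srt^2_2$-instance obtained from a stable tournament or stable coloring of pairs computably yields the required $\semo$- or $\sts(2)$-solution, and stability is evidently preserved in each direction.

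For $\Psf = \sfs(2)$, I would first observe that every free set is thin up to a finite modification, giving $\sts(2) \leq_c \sfs(2)$ and hence $\srrt^2_2 \leq_c \sfs(2)$. Conversely, viewing a stable $f : [\omega]^2 \to 2$ as a map into $\omega$, an infinite $f$-homogeneous set with $\set{0,1}$ removed is free for $f$, so $\sfs(2) \leq_c \srt^2_2$. Then the previous corollary applies.

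I do not anticipate any real obstacle: no new computability-theoretic construction is needed, and the whole content of the corollary is absorbed into the previous one. The only mild point to check is the uniformity of the reductions $\Psf \leq_c \srt^2_2$ — that the transformation from a $\Psf$-instance to a stable $2$-coloring is computable and preserves stability, so that an $\srt^2_2$-solution translates back to a $\Psf$-solution without further oracle power — but this is entirely routine.
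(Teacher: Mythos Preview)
Your approach for $\srrt^2_2$ and $\semo$ is correct and matches the paper's intended route via the preceding corollary: both $\srrt^2_2 \leq_c \srrt^2_2 \leq_c \srt^2_2$ and $\srrt^2_2 \leq_c \semo \leq_c \srt^2_2$ hold for exactly the reasons you give.

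Your treatment of $\sts(2)$ and $\sfs(2)$, however, rests on a definitional confusion. Instances of $\ts(2)$ and $\fs(2)$ (and their stable versions) are colorings $f : [\omega]^2 \to \omega$ with \emph{unboundedly many} colors, not $2$-colorings. Thus the sentence ``given a stable $2$-coloring $f : [\omega]^2 \to 2$, any infinite $f$-homogeneous set $H$ is \dots\ thin for $f$'' and the sentence ``viewing a stable $f : [\omega]^2 \to 2$ as a map into $\omega$, an infinite $f$-homogeneous set with $\{0,1\}$ removed is free for $f$'' do not establish the reductions $\sts(2) \leq_c \srt^2_2$ and $\sfs(2) \leq_c \srt^2_2$ that you need.

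For $\sts(2)$ the repair is immediate: given a stable $g : [\omega]^2 \to \omega$, set $h(x,y) = \min(g(x,y),1)$; then $h$ is a stable $2$-coloring and any infinite $h$-homogeneous set omits either color $0$ or color $1$ of $g$, hence is thin for $g$. For $\sfs(2)$, by contrast, no such one-step reduction to $\srt^2_2$ is apparent: extracting an infinite \emph{free} set for an arbitrary stable $\omega$-coloring from a single homogeneous set is genuinely nontrivial, and your argument does not survive once the correct definition is used. The clean fix is simply to invoke Corollary~\ref{thm:rt22-sts2-universal}, where the non-existence of universal instances for $\sts(2)$ and $\sfs(2)$ is already established via the low${}_2$-bounding argument; the new content of the present corollary is only $\srrt^2_2$ and $\semo$, for which your argument is sound.
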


\begin{proof}[Proof of Theorem~\ref{thm:incomplete-srrt22}]
The proof is an adaptation of \cite[Theorem 5.3.7]{miletipartition}.
Suppose that $D$ is a $\Delta^0_2$ incomplete set. We will construct a $\Delta^0_2$
coloring $f : \omega \to \omega$ such that $D$ does not compute any $f$-diagonalizing function.
We want to satisfy the following requirements for each $e \in \omega$:

\bigskip
$\Rcal_e$ : If $\Phi^D_e$ is total, then there is an $a$  such that 
$\Phi^D_e(a) = f(a)$.
\bigskip

For each $e \in \omega$, define the partial function $u_e$ by letting $u_e(a)$ be the use of $\Phi_e^D$ on input
$a$ if $\Phi_e^D(a)\da$ and letting $u_e(a) \ua$ otherwise.
We can assume w.l.o.g.\ that whenever $u_e(a) \da$ then $u_e(a) \geq a$. Also define a computable partial
function $\theta$ by letting $\theta(a) = (\mu t)[a \in \emptyset'_t]$ if $a \in \emptyset'$ and $\theta(a) \ua$ otherwise.

The local strategy for satisfying a single requirement $\Rcal_e$ works as follows. If $\Rcal_e$ receives
attention at stage $s$, this strategy does the following. First it identifies a number $a \geq e$ that is \emph{not}
restrained by strategies of higher priority such that the following conditions holds:
\begin{itemize}
  \item[(i)] $\Phi_{e,s}^{D_s}(a) \downarrow$
  \item[(ii)] $u_{e,s}(a) < max(0, \theta_s(a))$
\end{itemize}
If no such number $a$ exists, the strategy does nothing. Otherwise it puts a restraint on $a$
and \emph{commits} to assigning $f_s(a) = \Phi_{e,s}^{D_s}(a)$.
For any such $a$, this commitment will remain active as long as the strategy has a restraint on this element.
Having done all this, the local strategy is declared to be satisfied and will not act again unless 
either a higher priority puts restraints on $a$, or the value of $u_{e,s}(a)$ or $\theta_s(a)$ changes.
In both cases the strategy gets \emph{injured} and has to reset, releasing all its restraints.

To finish stage $s$, the global strategy assigns values $f_s(y)$ for all $y \leq s$ as follows:
if $y$ is commited to some value assignment of $f_s(y)$ due to a local strategy, then define $f_s(y)$ to be this value.
If not, let $f_s(y) = 0$. This finishes the construction and we now turn to the verification.

For each $e, a \in \omega$, let $Z_{e, a} = \set{s \in \omega : \Rcal_e \mbox{ restrains } a \mbox{ at stage } s}$.

\begin{claim}
For each $e, a \in \omega$, 
\begin{itemize}
  \item[(a)] if $\Phi^D_e(a) \uparrow$ then $Z_{e, a}$ is finite;
  \item[(b)] if $\Phi^D_e(a) \downarrow = 1$ then $Z_{e, a}$ is either finite or cofinite.
\end{itemize}
\end{claim}
\begin{proof}
By induction on the priority order. We consider $Z_{e, a}$, assuming that for all $\Rcal_{e'}$ of higher priority,
the set $Z_{e', a}$ is either finite or cofinite. First notice that $Z_{e, a} = \emptyset$
if $a < e$ or $a \not \in \emptyset'$, so we may assume that $a \geq e$ and $a \in \emptyset'$. 
If there exists $e' < e$ such that $Z_{e', a}$
is cofinite, then $Z_{e, a}$ is finite because at most one requirement may claim $a$ at a given stage.
Suppose that $Z_{e', a}$ is finite for all $e' < e$. Fix $t_0$
such that for all $e' < e$ and $s \geq t_0$ $\Rcal_{e'}$ does not restrain $a$ at stage $s$.
and $\theta_s(a) = \theta(a)$.

Suppose that $\Phi_e^D(a) \uparrow$. Fix $t_1 \geq t_0$ such that 
$D(b) = D_s(b)$ for all $b \leq \theta(a)$ and all $s \geq t_1$.
Then for all $s \geq t_1$, if $\Phi_{e,s}^{D_s}(a) \downarrow$ 
then we must have $u_{e,s}(a) > \theta(a)$ because otherwise $\Phi^D_e(a) \downarrow$.
It follows that for all $s \geq t_1$, requirement $\Rcal_e$ does not restrain $a$ at stage $s$. Hence $Z_{e, a}$ is finite.

Suppose now that $\Phi^D_e(a) \downarrow$.
Fix $t_1 \geq t_0$ such that for all $s \geq t_1$ we have 
$\Phi^{D_s}_{e,s}(a) \downarrow$ and $D_s(c) = D(c)$ for every $c \leq u_e(a)$.
For every $s \geq t_1$, $u_{e,s}(a) = u_{e,t_1}(a)$ and $\theta_s(a) = \theta_{t_1}(a)$ for each $i \leq a$.
So properties (i) and (ii) will either hold at each stage $s \geq t_1$, or not hold at each stage $s \geq t_1$.
Therefore $Z_{e,a}$ is either finite or cofinite.
\end{proof}

\begin{claim}
Each requirement $\Rcal_e$ is satisfied. 
\end{claim}
\begin{proof}
Suppose that $\Phi_e^D$ is total for some $e \in \omega$.
We will prove that $\Phi_e^D$ is not an $f$-diagonalizing function.
Let $A = \set{a \geq e : (\forall e' < e)Z_{e', a} \mbox{ is finite}}$.
Notice that $A$ is cofinite since for each $e' < e$,
there is at most one $a$ such that $Z_{e', a}$ is cofinite. Define $h: \omega \to \omega$ as follows.

If for all but finitely many $k \in \omega$, 
we have $k \in \emptyset' \imp k \in \emptyset'_{u_e(k)}$, 
then $\emptyset' \leq_T u_e \leq_T D$, contrary to hypothesis. 
Thus we may let $a$ be the least element of $\{k \in A : k \in \emptyset' \setminus \emptyset'_{u_e(k)}\}$
greater than~$e$.
We then have
\begin{itemize}
  \item[(1)] $a \geq e$, $\Phi_e^D(a) \downarrow$, $\theta(a) > u_e(a)$
  \item[(2)] For all $e' < e$, there exists $t$ such that $\Rcal_{e'}$
  does not claim $a$ at any stage $s \geq t$.
\end{itemize}
Therefore we may fix $t \geq a$ such that for all $s \geq t$, we have $\Phi^{D_s}_{e,s}(a) \downarrow$,
$\theta_s(a) = \theta(a)$, $u_{e,s}(a) = u_e(a)$, 
and for each $e' < e$, $R_{e'}$ does not claim $a$ at stage $s$. 
Thus, for every $s \geq t$, requirement $\Rcal_e$
claims $a' \leq a$ at stage $s$. Since $Z_{e,i}$ is either finite or cofinite for each $i \leq a$,
it follows that $Z_{e,a}$ is cofinite. By the above argument, we must have
$\Phi^D_e(a) \downarrow$, and by construction, $f(a) = \Phi^D_e(a)$. Therefore $\Rcal_e$ is satisfied.
\end{proof}

\begin{claim}
The resulting function $f_s$ is $\Delta^0_2$.
\end{claim}
\begin{proof}
Consider a particular element $a$. Because of Claim 1,
if $e > a$ then $Z_{e,a} = \emptyset$. We have then two cases:
Either $Z_{e, a}$ is finite for all $e \leq a$, in which case for all but finitely many
$s$, $f_s(a) = 0$, or $Z_{e,a}$ is cofinite for some $e$. Then there is a stage $s$ at which
requirement $\Rcal_e$ has committed $f_s(a) = \Phi_e^D(a)$ for assignment
and has never been injured. Thus $f$ is $\Delta^0_2$.
\end{proof}
\end{proof}

\vspace*{1cm}

\noindent \textbf{Acknowledgements}. The author is thankful to his PhD advisor Laurent Bienvenu
for his availability during the different steps giving birth to this paper,
and his useful suggestions increasing its readability.

\bibliographystyle{plain}
\bibliography{doc}

\end{document}